\newtheorem{thmintro}{Theorem}
\newtheorem{propintro}[thmintro]{Proposition}
\newtheorem{questintro}[thmintro]{Question}
\theoremstyle{definition}
\newtheorem{exintro}[thmintro]{Example}
\newtheorem{thm}{Theorem}[section]
\newtheorem{prop}[thm]{Proposition}
\newtheorem{lem}[thm]{Lemma}
\newtheorem{cor}[thm]{Corollary}
\newtheorem{question}[thm]{Question}
\newtheorem{setting}[thm]{Setting}
\theoremstyle{remark}
\newtheorem{rem}[thm]{Remark}
\newtheorem{example}[thm]{Example}
\theoremstyle{definition}
\newtheorem{claim}[thm]{Claim}
\newtheorem{defi}[thm]{Definition}
\newcommand{\Z}{\mathbb{Z}}
\newcommand{\R}{\mathbb{R}}
\newcommand{\N}{\mathbb{N}}
\DeclarePairedDelimiter{\abs}{\lvert}{\rvert}
\DeclarePairedDelimiter{\Span}{\langle}{\rangle}
\newcommand{\axis}[1]{\operatorname{Axis}(#1)}
\newcommand{\actson}{\curvearrowright}
\newcommand{\cat}[1]{\ensuremath{\operatorname{CAT}(#1)}}
\newcommand{\centr}[2]{\operatorname{C}_{#1}(#2)}
\newcommand{\conj}[2]{#2 #1 {#2}^{-1}}
\newcommand{\corner}[1]{\operatorname{Cor}(#1)}
\newcommand{\cst}[1]{\operatorname{CST}(#1)}
\newcommand{\curve}[1]{\mathcal C (#1)}
\newcommand{\da}[1]{\operatorname{DA}_{#1}}
\newcommand{\edge}[1]{\operatorname{E}(#1)}
\renewcommand{\epsilon}{\varepsilon}
\newcommand{\ext}[1]{\operatorname{ext}({#1})}
\renewcommand{\phi}{\varphi}
\newcommand{\fix}[1]{\operatorname{Fix}(#1)}
\newcommand{\girth}[1]{\operatorname{girth}(#1)}
\renewcommand{\L}{\mathscr L}
\newcommand{\lab}[1]{\operatorname{label}(#1)}
\newcommand{\len}[2]{\operatorname{len}_{#1}(#2)}
\newcommand{\link}[2]{\operatorname{Link}_{#1}({#2})}
\newcommand{\norm}[2]{\operatorname{N}_{#1}(#2)}
\renewcommand{\setminus}{\smallsetminus}
\renewcommand{\sl}[1]{\operatorname{SL}(#1)}
\newcommand{\stab}[2]{\operatorname{Stab}_{#1}(#2)}
\newcommand{\ver}[1]{\operatorname{V}(#1)}
\newcommand{\wg}[1]{\operatorname{wg}(#1)}
\begin{document}

\title{Isomorphism invariance of the girth of Artin groups}

\author[G. Sartori]{Giovanni Sartori}
   \address{(Giovanni Sartori) Department of Mathematics, Heriot-Watt University and Max\-well Institute for Mathematical Sciences, Edinburgh, UK}
   \email{gs2057@hw.ac.uk}

\begin{abstract}
    For all Artin groups, we characterise the girth (i.e. the length of a shortest cycle) of the defining graph algebraically, showing that it is an isomorphism invariant. Using this result, we prove that the Artin groups based on a cycle graph are isomorphically rigid.\par
    Alongside the girth, we introduce a new graph invariant, the weighted girth, which takes into account the labels of the defining graph. Within the class of two-dimensional Artin groups of hyperbolic type, we characterise the weighted girth in terms of certain minimal right-angled Artin subgroups, showing that it is an isomorphism invariant. Finally, under the further hypothesis of leafless defining graph, we recover the weighted girth as the girth of the commutation graph introduced by Hagen-Martin-Sisto.
\end{abstract}

\maketitle

\small

\noindent 2020 \textit{Mathematics subject classification.} 20F65 (primary), 20F36, 20F67.

\noindent \textit{Key words.} Artin groups, isomorphism problem.

\normalsize

\setcounter{tocdepth}{1}
\tableofcontents

\section*{Introduction}

\subsection*{Motivation} Artin groups are a class of groups introduced by Tits that appear in many areas of mathematics, as they generalise free groups, free abelian groups and braid groups, and are tightly related to Coxeter groups~\cite{tits1966normalisateurs}. The usual way to define an Artin group $A_\Gamma$ is to give a presentation via a finite labelled simplicial graph~$\Gamma$ or, how it is usually referred to, a \emph{defining graph}. While isomorphic defining graphs give rise to isomorphic Artin groups, the converse is false in general. In fact, one of the main open problems for this family of groups is the {isomorphism problem}:

\begin{questintro}[\cite{charney2016problems}*{Problem~28}]
    Given two labelled graphs $\Gamma$ and $\Lambda$, is there an algorithm to determine whether the Artin groups $A_\Gamma$ and $A_\Lambda$ are isomorphic?
\end{questintro}

So far, the general problem is completely open, whereas some results hold, when restricting to some subclasses~\citelist{\cite{baudisch1981subgroups}\cite{droms1987isomorphisms}\cite{vaskou2022isomorphism}}.\par
One step towards a solution of the isomorphism problem is to find isomorphism invariants of the defining graph or, in other words, properties of the defining graph that are preserved under group isomorphisms. Baudisch proved that being right-angled is an isomorphism invariant~\cite{baudisch1981subgroups}. Martin-Vaskou proved that being of large-type is preserved under isomorphism and, more precisely, that the set of label of a defining graph is also an isomorphism invariant within the class of two-dimensional Artin groups~\cite{martin2024characterising}. More recently, Jones-Mangioni-Sartori proved that having a separating vertex is an isomorphism invariant (in fact, isomorphic Artin groups have sets of big chunks in bijection)~\cite{jones2025jsj}. The isomorphism invariance of some elementary graph-theoretic properties remains open in general:

\begin{questintro}
    Let $\Gamma$ and $\Lambda$ be labelled graphs such that the groups $A_\Gamma$ and $A_\Lambda$ are isomorphic. Do $\Gamma$ and $\Lambda$ have the same number of vertices? Do they have the same number of edges? Do they have the same girth?
\end{questintro}

It is conjectured that, if two Artin groups are isomorphic, then the corresponding defining graphs are \emph{twist equivalent} (and we refer to~\cite{crisp2005automorphisms} for the definition). Being twist equivalent is an equivalence relation amongst labelled graph which is strictly weaker than isomorphism. Nevertheless, two twist-equivalent labelled graphs have, in particular, the same number of vertices and edges, and the same girth (i.e. the length of the shortest cycle subgraph). 

\subsection*{Statement of results} In this paper, we give an algebraic characterisation of the girth of the defining graph, showing that it is an isomorphism invariant.

\begin{thmintro}[Theorem~\ref{thm:girIsoInv}]
    \label{thmintro:girIsoInv}
    Let $\Gamma$ be a labelled graph; the girth of~$\Gamma$ coincides with the minimum $n\in\N_{\ge3}$ such that there exists a labelled cycle $C_n$ on $n$ vertices and an embedding $A_{C_n}\to A_\Gamma$.\par
    In particular, the girth of the defining graph is an isomorphism invariant.
\end{thmintro}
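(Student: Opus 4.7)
The plan is to prove the equality $\girth{\Gamma} = m$, where $m := \min\{n \in \N_{\ge 3} : A_{C_n} \hookrightarrow A_\Gamma \text{ for some labelled cycle } C_n\}$, by establishing the two inequalities separately.

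For the inequality $m \le \girth{\Gamma}$, let $C \subseteq \Gamma$ be a cycle subgraph realising $g := \girth{\Gamma}$, regarded as a full labelled subgraph with labels restricted from $\Gamma$. By van der Lek's theorem on standard parabolic subgroups, the natural map $A_C \hookrightarrow A_\Gamma$ is injective, so $C$ witnesses $m \le g$.

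The substantive direction is $\girth{\Gamma} \le m$: given an embedding $\phi : A_{C_n} \hookrightarrow A_\Gamma$, I must produce a cycle subgraph of $\Gamma$ of length at most $n$. The natural strategy is a parabolic-closure argument. Write $s_1, \dots, s_n$ for the standard generators of $A_{C_n}$, set $x_i := \phi(s_i)$, and let $\Theta_i \subseteq \Gamma$ denote a parabolic closure of $x_i$, that is, a minimal labelled subgraph such that a conjugate of $x_i$ lies in $A_{\Theta_i}$. Adjacent pairs $(x_i, x_{i+1})$ satisfy a braid relation of finite label, whereas non-adjacent pairs satisfy no such relation in $A_{C_n}$. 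I would then transfer this cyclic adjacency of the $x_i$ to a cyclic adjacency of the $\Theta_i$, and extract from it a cycle subgraph of $\Gamma$ of length at most $n$.

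The main obstacle is the last step: making precise the sense in which a braid relation between two elements forces their parabolic closures in $\Gamma$ to intersect, and the sense in which non-adjacency in $C_n$ forbids certain cyclic linkings. Such rigidity of parabolic closures under braid relations is not known in full generality. A reasonable route is to first reduce to the $2$-connected (no cut vertex) case via the big-chunk decomposition of Jones--Mangioni--Sartori cited in the introduction --- using that cycle Artin groups are indecomposable over infinite cyclic subgroups, so that the image $\phi(A_{C_n})$ must be conjugate into a single chunk --- and then, within a $2$-connected host, combine structural features specific to cycle Artin groups (such as their centralizer and centre structure, and the fact that no proper standard parabolic of $A_{C_n}$ already carries enough of the $s_i$ to generate the whole group) with a direct combinatorial analysis of the $\Theta_i$. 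I would expect this local-to-global combinatorial transfer to be the technical core of the argument; the final isomorphism-invariance claim is then automatic, since the quantity $m$ depends only on the abstract group $A_\Gamma$.
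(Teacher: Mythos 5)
The first inequality (girth $\ge m$, via van der Lek and the standard parabolic $A_C\le A_\Gamma$ for a shortest cycle $C\subseteq\Gamma$) is correct and is exactly what the paper does (Example~\ref{ex:cannEmbedd}). The substantive direction, however, is not proved in your proposal: you reduce it to showing that braid relations between the $x_i=\phi(s_i)$ force a ``cyclic adjacency'' of their parabolic closures $\Theta_i$, and you explicitly acknowledge that this transfer is the missing technical core. This is a genuine gap, not a routine verification. Two concrete problems: first, parabolic closures (minimal parabolic subgroups containing a given element) are not known to exist or be unique for arbitrary Artin groups, so the objects $\Theta_i$ are not even well defined at the level of generality of the theorem; second, even granting their existence, the claim that a braid relation between $x_i$ and $x_{i+1}$ forces $\Theta_i$ and $\Theta_{i+1}$ to share a vertex, while non-adjacency forbids a ``short-circuit'', is precisely the content one needs to prove, and no mechanism is offered. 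The suggested reduction to the $2$-connected case via the big-chunk decomposition does not address this difficulty.

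The paper's route is genuinely different and avoids parabolic closures of single elements altogether. It splits on $\girth\Gamma$: if $\girth\Gamma=3$ the minimality is vacuous; if $\girth\Gamma\ge5$ then $\Gamma$ is triangle-free with no $(2,2,2,2)$-square, so $A_\Gamma$ is two-dimensional of hyperbolic type, and the images of the standard generators of $A_{C_n}$ are shown to act elliptically on the Deligne complex with fixed-point sets assembling into a \emph{cycle of $k\le n$ standard trees} (Lemmas~\ref{lem:cycle of std trees} and~\ref{lem:mono->cycle}); a combinatorial Gauss--Bonnet argument on a minimal filling disc diagram (Lemma~\ref{lem:lenCycStdTrs}) then forces $k\ge\girth\Gamma$. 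The remaining case $\girth\Gamma=4$, where $A_\Gamma$ may fail to be of hyperbolic type because of $(2,2,2,2)$-squares, needs the separate ad-hoc Lemma~\ref{lem:gir3VsGir4}, which invokes Vaskou's classification of dihedral subgroups of triangle-free Artin groups to rule out $A_{C_3}\hookrightarrow A_\Gamma$. Your proposal engages with none of this case analysis, and in particular does not see why the low-girth cases require different treatment. To complete your argument you would need to either prove the parabolic-closure rigidity you posit (which is open in this generality) or replace it by a geometric argument of the above kind.
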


\begin{figure}
    \centering
    \includegraphics[width=0.5\linewidth]{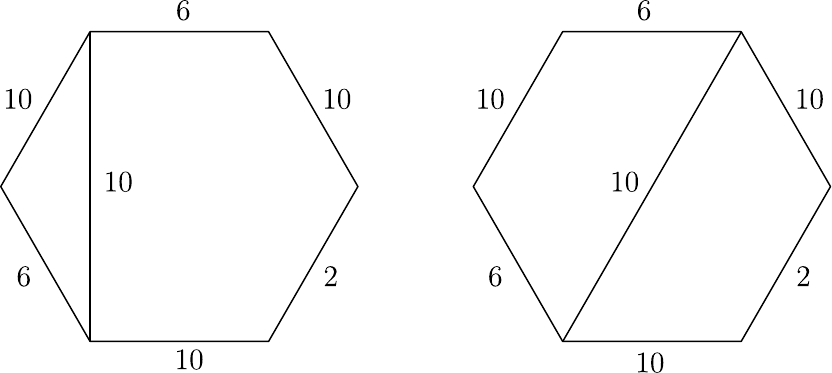}
    \caption{Two labelled graphs of different girth.}
    \label{fig:girthDistinguish}
\end{figure}

\begin{exintro}
    \label{exintro:girthDistinguish}
    The two labelled graphs of Figure~\ref{fig:girthDistinguish} have different girth ($3$ and $4$), therefore the associated Artin groups are not isomorphic, in view of Theorem~\ref{thmintro:girIsoInv}. \par 
    
    Note that the two Artin groups could not be told apart using other existing methods. Both graphs are connected and have the same number of vertices and edges. Both graphs are even-labelled, hence the abelianisations of the two groups coincide. The set of labels does not satisfy the hypotheses of Blasco-García--Paris's isomorphism criterion for even Artin groups~\cite{BLASCOGARCIA202235}*{Theorem 5.1}. None of them is of large type but both are two dimensional. Vaskou's criterion for two-dimensional Artin groups applies~\cite{vaskou2022isomorphism}*{Theorem E}, yet does not distinguish the two groups: none of them has euclidean triangles, the sets of labels with counted multiplicity coincide and all pieces (a local information at each vertex) coincide. Finally, none of the two graphs has a separating vertex~\cite{jones2025jsj}*{Theorem A}.
\end{exintro}

Our methods also allow us to show that Artin groups based on a cycle are isomorphically rigid.

\begin{thmintro}[Theorem~\ref{thm:rigidCycArtGrp}]
    \label{thmintro:rigidCycArtGrp}
    Let $\Gamma$ and $\Lambda$ be labelled cycle graphs; if $A_\Gamma\cong A_\Lambda$, then $\Gamma\cong\Lambda$.
\end{thmintro}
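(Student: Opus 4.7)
The plan is to combine Theorem~\ref{thmintro:girIsoInv} with a sharper analysis of the labelled cycles that realise the girth. By Theorem~\ref{thmintro:girIsoInv}, any isomorphism $A_\Gamma\cong A_\Lambda$ forces $\girth{\Gamma}=\girth{\Lambda}$; since a labelled cycle on $n$ vertices has girth exactly $n$, I may assume that $\Gamma$ and $\Lambda$ are both cycles $C_n$ for the same $n\ge 3$, with labels $(m_1,\dots,m_n)$ and $(k_1,\dots,k_n)$ written in cyclic order.

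It then suffices to show that these two cyclic sequences agree up to rotation and reflection. For $n\ge 4$ the cycles are triangle-free, hence the Artin groups are two-dimensional. Applying Martin--Vaskou~\cite{martin2024characterising} first recovers the multiset of labels, and Vaskou's criterion~\cite{vaskou2022isomorphism} extracts the multiset of local pieces at each vertex, i.e.\ the unordered pair of labels incident to it. To promote this local information to the full cyclic ordering, I would identify each sub-path visual subgroup of $C_n$ through invariant algebraic data, such as its JSJ chunks in the sense of Jones--Mangioni--Sartori~\cite{jones2025jsj} or centralisers of standard generators, and then use the fact that the cyclic order is the unique arrangement compatible with these local data. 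For $n=3$, $\Gamma$ and $\Lambda$ are triangles: the spherical and Euclidean cases are classical and can be distinguished via centres, abelianisations, and finite Coxeter quotients, whereas the hyperbolic case is two-dimensional of hyperbolic type and falls within the reach of the weighted girth and the commutation graph invariants developed elsewhere in this paper.

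The main obstacle is the cyclic-ordering step for $n\ge 4$: existing results comfortably deliver the multiset of labels, but not their arrangement around the cycle. The most promising route is to upgrade Theorem~\ref{thmintro:girIsoInv} into a uniqueness statement, showing that when $\Gamma$ is itself a labelled cycle, any labelled cycle $C_n$ admitting an embedding into $A_\Gamma$ must have the same label sequence as $\Gamma$ up to the dihedral symmetry of $C_n$. Such a uniqueness result, applied to the embedding $A_\Lambda\hookrightarrow A_\Gamma$ induced by the given isomorphism $A_\Gamma\cong A_\Lambda$, would then immediately yield $\Gamma\cong\Lambda$ as labelled graphs and conclude the proof.
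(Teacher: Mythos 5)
Your opening step---invoking Theorem~\ref{thmintro:girIsoInv} to conclude that $\Gamma$ and $\Lambda$ are cycles on the same number $n$ of vertices---is exactly how the paper begins. From there, however, the proposal does not close. The decisive difficulty, which you yourself flag as ``the main obstacle'', is recovering the \emph{cyclic arrangement} of the labels and not merely their multiset, and you offer no argument for it: even the multiset of local pieces (the unordered pair of labels at each vertex) fails to determine a labelled cycle up to dihedral symmetry. For instance, the $6$-cycles with label sequences $(3,4,3,5,4,5)$ and $(3,4,5,3,4,5)$ have the same label multiset and the same multiset of vertex pieces, yet are not isomorphic; both are triangle free and of hyperbolic type, so they lie squarely in the regime you are considering. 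The ``uniqueness upgrade'' of Theorem~\ref{thmintro:girIsoInv} that you propose as the most promising route is precisely the statement that has to be proved, and it is where all the work lies. The paper establishes it geometrically: the isomorphism induces a cycle of $n$ standard trees in the Deligne complex of $A_\Lambda$, the equality case of the Gauss--Bonnet estimate (Lemma~\ref{lem:lenCycStdTrs}) forces this cycle to bound a single translate of the fundamental domain $K_\Lambda$ when $n\ge5$, and hence the isomorphism carries standard generators to standard generators preserving adjacency and edge labels (Lemma~\ref{lem:bound_fund_dom}), which recovers the cyclic order.

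The small cases are also underestimated. For $n=4$ the induced cycle of standard trees need \emph{not} bound a single fundamental domain: when $\Lambda$ is a $(p,q,2,2)$- or $(p,2,2,2)$-cycle it may bound two or four domains, and excluding these configurations requires additional invariants (invariance of large type, counting conjugacy classes of $\da p$-subgroups). For $n=3$ the weighted girth and commutation graph only record $3+\abs{\{\text{edges of label}\ge3\}}$, so they cannot distinguish, say, $(2,4,5)$ from $(2,4,7)$; the paper instead combines maximality of dihedral parabolic subgroups (Lemma~\ref{lem:labels_subset}) with an ad hoc conjugacy argument to rule out $A_{(2,4,4)}\cong A_{(2,4,v)}$ for $v\ge5$, and uses the abelianisation to separate $(2,4,4)$ from $(2,3,6)$. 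As written, your text is a plan in which the essential steps remain open.
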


Alongside the girth, we consider another graph invariant, namely the \emph{weighted girth} of the defining graph, i.e. the girth of the graph which is obtained after subdividing once each edge of label at least~$3$ (see Definition~\ref{def:wg}). Throughout this paper, if $\Lambda$ is a simplicial graph, then we denote by $R_\Lambda$ the right-angled Artin group associated with~$\Lambda$.

\begin{thmintro}[Theorem~\ref{thm:wgIsoInv}]
    \label{thmintro:wgIsoInv}
    Let $A_\Gamma$ be two-dimensional Artin group of hyperbolic type. The weighted girth of~$\Gamma$ coincides with the minimum $n\in\N_{\ge3}$ such that there exists a cycle $C_n$ on $n$ vertices and an embedding $R_{C_n}\to A_\Gamma$. \par
    In particular, the weighted girth of a defining graph is an isomorphism invariant in the class of two-dimensional Artin groups of hyperbolic type.
\end{thmintro}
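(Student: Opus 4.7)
The plan is to prove two inequalities. First I would exhibit an embedding $R_{C_n}\hookrightarrow A_\Gamma$ with $n=\wg{\Gamma}$, giving the upper bound on the minimum; second, I would show that any embedding $R_{C_n}\hookrightarrow A_\Gamma$ forces $n\ge\wg{\Gamma}$.

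For the upper bound, I would fix a cycle $C$ in the subdivided graph of $\Gamma$ realising $\wg{\Gamma}$. Its vertices split into original vertices $a\in\ver{\Gamma}$ and subdivision midpoints $v_{ab}$ corresponding to edges $\{a,b\}$ with label $m_{ab}\ge 3$. To each midpoint $v_{ab}$ I would associate the element $z_{ab}:=\Delta_{ab}^2$, a generator of the infinite cyclic centre of the dihedral parabolic $A_{ab}$; this commutes with both $a$ and $b$. Together with the standard generators appearing in $C$, this yields a homomorphism $R_{C_n}\to A_\Gamma$ respecting the cyclic commutation pattern of $C_n$. Injectivity needs two ingredients: (a) non-adjacent generators of $C_n$ do not commute accidentally, which in the two-dimensional hyperbolic setting follows from the fact that the centralisers of $z_{ab}$ and of a standard generator are controlled by their minimal standard parabolic subgroups; and (b) no unexpected relation holds in the subgroup generated, for which I would run a ping-pong argument on the Deligne-type CAT(0) complex associated with $A_\Gamma$.

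For the lower bound, let $\phi\colon R_{C_n}\hookrightarrow A_\Gamma$ and write $x_1,\dots,x_n$ for the images of the cyclic generators. Each consecutive pair $(x_i,x_{i+1})$ commutes, so it sits in a common maximal abelian subgroup. In a two-dimensional Artin group of hyperbolic type, these subgroups are, up to conjugation, contained either in $\langle a\rangle$ for a standard generator $a$ or in the centraliser of $\Delta_{ab}^2$ for an edge $\{a,b\}$ with $m_{ab}\ge 3$; this allows me to attach to each $x_i$ a vertex of the subdivided graph of $\Gamma$, and to each commutation $x_i\leftrightarrow x_{i+1}$ either an edge of label $2$ (when $x_i,x_{i+1}$ share a $\langle a\rangle$-type centraliser) or a length-$2$ path through an edge of label $\ge 3$ (when they share a $\Delta_{ab}^2$-type centraliser). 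The result is a closed combinatorial loop in the subdivided graph of length at most $n$, so $n\ge\wg{\Gamma}$ by definition of the weighted girth.

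The main obstacle is the lower bound, specifically arranging the assignment $x_i\mapsto$ vertex of the subdivided graph so that it is well defined, respects consecutive commutations, and does not degenerate into a trivial or shortcut loop. This demands tight control over rank-$2$ abelian subgroups and over the centralisers of centres of dihedral parabolics in two-dimensional Artin groups of hyperbolic type—the same structural input that underlies the good behaviour of the Hagen-Martin-Sisto commutation graph mentioned in the abstract.
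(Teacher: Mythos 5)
Your upper bound has a concrete error: the map you describe, sending original vertices of the subdividing cycle to the standard generators themselves and midpoints to $z_{ab}=\Delta_{ab}^2$, is a well-defined homomorphism but it is \emph{not} injective. If $\{a,b\}$ is an edge of $\Gamma$ with $m_{ab}\ge3$, the vertices corresponding to $a$ and $b$ are non-adjacent in $C_n$ (the midpoint separates them), so they generate a free group of rank $2$ in $R_{C_n}$; their images $a,b$ satisfy the braid relation in $A_\Gamma$, so the kernel is non-trivial, and no ping-pong argument can rescue the unpowered generators. The fix is exactly what the paper does in Example~\ref{ex:cannEmbedd}: replace each generator by a sufficiently high power (fourth powers suffice) and invoke Jankiewicz--Schreve \cite{jankiewicz2022right}*{Theorem 1.1} to conclude that the powered generators do generate the cycle RAAG.

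The more serious gap is in the lower bound, and it sits precisely at the step you flag as ``the main obstacle'' without resolving it. From each commuting pair $(x_i,x_{i+1})$ you get, via the Martin--Przytycki classification (Theorem~\ref{thm:classify Z^2 subgrp}), containment in a \emph{conjugate} $g_i\Span{a_i,b_i}g_i^{-1}$ or $g_i\centr{A_\Gamma}{a_i}g_i^{-1}$, with the conjugators $g_i$ varying from one pair to the next and consecutive pieces sharing only the single element $x_{i+1}$. There is no reason the underlying edges $\{a_i,b_i\}$ of $\Gamma$ concatenate into a closed walk in (the subdivision of) $\Gamma$: the object you actually obtain is a closed chain of standard trees and type-$2$ vertices in the Deligne complex $D_\Gamma$, which in general wanders through many translates of the fundamental domain and supports far more cycles than $\Gamma$ does. (A sanity check: for $\Gamma$ a tree your argument would produce a closed loop in a tree; ruling even this out already requires a disc-diagram argument, cf.\ Remark~\ref{rem:treeDefGrph}.) Proving that every such cycle of standard trees has weighted length at least $\wg\Gamma$ is the paper's main technical result (Lemma~\ref{lem:cycStdTrsAssToRaag} to pass from the RAAG embedding to the cycle of trees and control which intersection vertices have label $\ge3$, then Lemma~\ref{lem:wgCycStdTrs}), and it is established by filling the cycle with a reduced disc diagram and running the combinatorial Gauss--Bonnet theorem, with separate delicate case analyses when $\Gamma$ contains triangles. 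Without an argument of this kind your assignment of vertices of the subdivided graph to the $x_i$ is not well defined globally, and the inequality $n\ge\wg\Gamma$ is not established.
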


\begin{figure}
    \centering
    \includegraphics[width=0.5\linewidth]{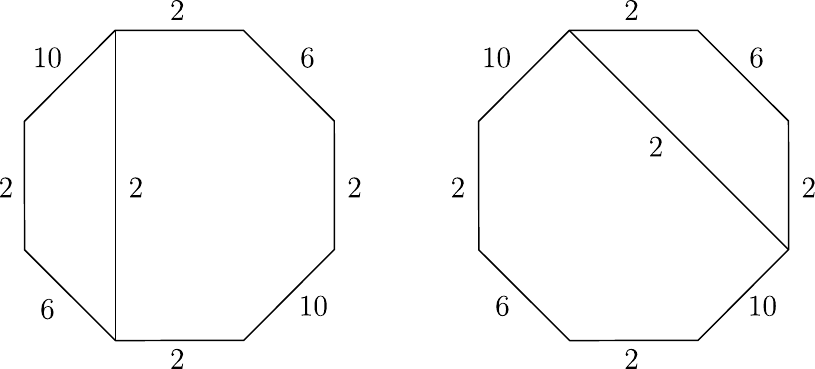}
    \caption{Two labelled graphs with same girth but different weighted girth.}
    \label{fig:wgDistinguish}
\end{figure}

\begin{exintro}
    The two graphs in Figure~\ref{fig:wgDistinguish} have different weighted girth ($6$ and $5$). Both graphs are triangle free and do not contain the $(2,2,2,2)$-square subgraph, hence are two dimensional and of hyperbolic type (see Definition~\ref{def:hypTyp}). Theorem~\ref{thmintro:wgIsoInv} applies, telling apart the associated Artin groups. \par
    Note that the two graphs have the same girth, so Theorem~\ref{thmintro:girIsoInv} (as well as all other criteria listed in Example~\ref{exintro:girthDistinguish}) fails to distinguish the two groups. 
\end{exintro}

Finally, by means of Theorem~\ref{thmintro:wgIsoInv}, we realise the weighted girth $\wg\Gamma$ as the girth of the commutation graph~$Y_\Gamma$. The latter was introduced by Hagen-Martin-Sisto to show that Artin groups of large and hyperbolic type are hierarchically hyperbolic~\cite{hagen2024extra}, playing the role of the curve graph for mapping class groups, and is quasi-isometric to the cone-off Deligne complex of Martin-Przytycki~\cite{martin2022acylindrical}. Moreover, when $\Gamma$ is furthermore without leaves, the commutation graph coincides with the intersection graph introduced by Huang-Osajda-Vaskou~\cite{huang2024rigidity}.

\begin{thmintro}[Theorem~\ref{thm:girthCrvGrph}]
    \label{thmintro:girthCrvGrph}
    Let $A_\Gamma$ be a two-dimensional Artin group of hyperbolic type and assume that $\Gamma$ has no leaves. Then $\wg\Gamma=\girth{Y_\Gamma}$.
\end{thmintro}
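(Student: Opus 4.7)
The plan is to prove the two inequalities $\girth{Y_\Gamma}\le \wg\Gamma$ and $\wg\Gamma\le\girth{Y_\Gamma}$ separately, leveraging Theorem~\ref{thmintro:wgIsoInv} for the latter.

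For the inequality $\girth{Y_\Gamma}\le\wg\Gamma$, I would construct an explicit cycle in $Y_\Gamma$ of length $\wg\Gamma$. By Definition~\ref{def:wg}, $\wg\Gamma$ is realised by a shortest cycle $\sigma$ in the graph $\Gamma'$ obtained from $\Gamma$ by inserting a midpoint $m_e$ on every edge $e$ of label at least~$3$. I would map each original vertex $v$ of $\Gamma$ on $\sigma$ to the vertex of $Y_\Gamma$ corresponding to the standard generator $\langle a_v\rangle$, and each midpoint $m_e$ to the vertex corresponding to the dihedral parabolic $A_e$. Consecutive vertices of $\sigma$ are separated either by a label-$2$ edge of $\Gamma$ (so their images are commuting generators, hence adjacent in $Y_\Gamma$) or by half of a subdivided edge (so the generator lies in the corresponding dihedral piece, again giving adjacency in $Y_\Gamma$). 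This produces a closed path of length $\wg\Gamma$ in $Y_\Gamma$, and the leafless hypothesis on $\Gamma$ is exactly what is needed to ensure that this closed path is actually an embedded cycle rather than collapsing onto a shorter one.

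For the converse inequality, fix a shortest cycle $\tau$ in $Y_\Gamma$ of length $n=\girth{Y_\Gamma}$. Its vertices provide a cyclic sequence $P_1,\dots,P_n$ of (cyclic or dihedral) parabolic subgroups of $A_\Gamma$ such that consecutive pairs commute. The strategy is to pick, for each $i$, a non-trivial element $g_i\in P_i$ supported away from the intersection $P_i\cap P_{i+1}$, and to prove that the subgroup $\langle g_1,\dots,g_n\rangle$ is isomorphic to $R_{C_n}$; Theorem~\ref{thmintro:wgIsoInv} then delivers $\wg\Gamma\le n=\girth{Y_\Gamma}$.

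The main obstacle is ruling out unwanted commutations between non-adjacent $g_i,g_j$, so that the subgroup they generate is a genuine RAAG on $C_n$ and not one with additional relations. Here I would combine two ingredients. First, the two-dimensional hyperbolic-type assumption on $A_\Gamma$ forces intersections of distinct parabolic subgroups to be rigidly controlled and rules out the kind of flats that could produce hidden commutations; this is precisely where the hypothesis enters, and is consistent with the structural results of Martin-Przytycki and Hagen-Martin-Sisto underlying the definition of $Y_\Gamma$. Second, the minimality of $\tau$ is essential: any spurious commutation between non-adjacent $P_i,P_j$ would correspond to an edge of $Y_\Gamma$ shortcutting $\tau$ into a strictly shorter cycle, contradicting $n=\girth{Y_\Gamma}$. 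Together these should ensure that the $g_i$ satisfy exactly the commutation relations of $R_{C_n}$, producing the required embedding.
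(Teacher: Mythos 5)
The first inequality $\girth{Y_\Gamma}\le\wg\Gamma$ is essentially fine, although your parenthetical about the leafless hypothesis is off: leaflessness is not what makes your closed path embedded (its vertices lie on a cycle of $\Gamma$, so injectivity is automatic); it is what guarantees that $\operatorname{Z}(\norm{A_\Gamma}{a})=\Span{a}$ for every vertex $a$ (a leaf attached by an edge of label at least~$3$ has $\norm{A_\Gamma}{a}\cong\Z^2$ with strictly larger centre), which is what makes the adjacency in $Y_\Gamma$ match the combinatorics of $\Gamma$ and, more importantly, what is needed in the converse direction to read off geometric data from a cycle in $Y_\Gamma$.

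The converse inequality is where the proposal breaks down. Producing elements $g_i\in P_i$ with the correct pairwise commutation pattern and then asserting that $\Span{g_1,\dots,g_n}\cong R_{C_n}$ is not a routine step: there is no general principle that elements realising the commutation pattern of a graph generate the corresponding RAAG, and ruling out ``spurious commutations'' only controls the pairwise relations, not longer relations or the failure of non-commuting pairs to generate free groups. This is precisely the content of Question~\ref{quest:classifyCycRaagSubGrp}, which the paper poses as open (the known results of Jankiewicz--Schreve apply to powers of standard generators and centres of dihedral parabolics in a fixed fundamental domain, not to an arbitrary cyclic family of parabolic subgroups extracted from a shortest cycle of $Y_\Gamma$). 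Moreover, your minimality argument conflates non-adjacency of $P_i$ and $P_j$ in $Y_\Gamma$ (their \emph{centres} do not elementwise commute) with non-commutation of the particular elements $g_i,g_j$ you chose, which for a dihedral-type vertex need not be central. The paper avoids constructing any RAAG subgroup in this direction: it identifies $Y_\Gamma$ equivariantly with the geometric commutation graph $\curve\Gamma$ (this is where leaflessness and the self-normalising properties of the relevant subgroups enter), shows that simple loops in $\curve\Gamma$ correspond bijectively to cycles of standard trees with matching length and weighted girth (Lemma~\ref{lem:girGeomCommGrph}), and then applies the Gauss--Bonnet lower bound $\wg{\L}\ge\wg\Gamma$ of Lemma~\ref{lem:wgCycStdTrs} directly. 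You should either follow that route or restrict your use of Theorem~\ref{thmintro:wgIsoInv} to the easy direction only.
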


\subsection*{Strategy of proof} Our methods are mainly geometric. The proofs of Theorem~\ref{thmintro:girIsoInv} and Theorem~\ref{thmintro:wgIsoInv} are based on the observation that, when $A_\Gamma$ is two dimensional and of hyperbolic type, the embedding of a cycle Artin group in $A_\Gamma$ induces a \emph{cycle of standard trees} in the Deligne complex~$D_\Gamma$ (see Lemma~\ref{lem:mono->cycle}). A standard tree is, up to translation, the fixed-point set of a standard generator and by cycle of standard trees we mean a sequence $\{T_i\}_{i\in\Z_n}$ of standard trees with the properties that, for every $i\in\Z_n$, $T_i\cap T_{i+1}\ne\varnothing$ and $T_{i-1}\cap T_i\cap T_{i+1}=\varnothing$ (see Definition~\ref{def:cycStdTree}). Remarkably, our notion of cycle of standard trees is weaker than the one introduced by Blufstein-Martin-Vaskou~\cite{blufstein2024homomorphisms} (see Remark~\ref{rem:cycStdTrsOfBMV}). We then define a meaningful version of length and weighted girth of a cycle of standard trees (see Definition~\ref{def:wgCycStdTrs}), and show that these have uniform lower bounds that depend only on~$\Gamma$. This will be our main technical result:

\begin{propintro}[\protect{Lemma~\ref{lem:lenCycStdTrs} and Lemma~\ref{lem:wgCycStdTrs}}]
    \label{propintro:stdTrsVsDefGrph}
    Let $A_\Gamma$ be a two-dimensional Artin group. For every cycle of standard trees $\L=\{T_i\}_{i\in\Z_n}$ in $ D_\Gamma$,
    \[
    n\ge\girth\Gamma.
    \]
    Moreover, if $\Gamma$ is triangle free or of hyperbolic type, then
    \[
    \wg\L\ge\wg\Gamma.
    \]
\end{propintro}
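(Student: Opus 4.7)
The strategy is to translate the cycle of standard trees $\L=\{T_i\}_{i\in\Z_n}$ into a closed walk in $\Gamma$ (for the girth bound) or in the subdivided graph $\Gamma'$ (for the weighted bound) of appropriate length, and then to conclude via the elementary observation that a closed walk without immediate backtracking in a simplicial graph must contain a cycle, hence has length at least the girth.

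First, I would exploit the local structure of the Deligne complex $D_\Gamma$: each standard tree $T_i$ is (up to the $A_\Gamma$-action) the fixed-point set of a unique standard generator $\sigma_i\in\ver{\Gamma}$, and any point of $T_i\cap T_{i+1}$ lies on a vertex of $D_\Gamma$ whose stabiliser is a standard parabolic of rank at most two (since $A_\Gamma$ is two-dimensional). The rank-$0$ and rank-$1$ cases are incompatible with containing two distinct conjugates of standard generators, so the stabiliser must be conjugate to a dihedral parabolic $A_{e_i}$ for some edge $e_i$ of $\Gamma$, whence $\sigma_i,\sigma_{i+1}\in e_i$. The sequence $(\sigma_i)_{i\in\Z_n}$ thus traces a closed walk in~$\Gamma$ of length~$n$. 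The triple-intersection hypothesis $T_{i-1}\cap T_i\cap T_{i+1}=\varnothing$ is designed to preclude immediate backtracks in this walk, as a backtrack would expose a common fixed vertex for three consecutive generators. After collapsing constant runs $\sigma_i=\sigma_{i+1}$ (which can occur when the dihedral $A_{e_i}$ contains several conjugates of the same generator, i.e.\ when $m_{\sigma_i\sigma_{i+1}}\ge 3$), the resulting non-backtracking closed walk has length at most~$n$ and image a subgraph of $\Gamma$ that cannot be a tree; it therefore contains a cycle of length at most~$n$, giving $n\ge\girth{\Gamma}$.

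The principal technical obstacle is handling the constant runs $\sigma_i=\sigma_{i+1}$: one must verify that the collapse preserves the non-backtracking property inherited from the triple-intersection hypothesis, and rule out the degenerate configurations in which the entire cycle ``zigzags'' inside a single dihedral parabolic (these cannot arise because the condition $T_{i-1}\cap T_i\cap T_{i+1}=\varnothing$ reappears inside any such would-be zigzag and forces distinct intersection points). For the weighted statement, the walk is lifted to the subdivided graph $\Gamma'$: an edge of label~$2$ contributes~$1$ and an edge of label $\ge 3$ contributes~$2$ (via the subdivision vertex), matching the definition of $\wg{\L}$. The hypothesis ``triangle-free or of hyperbolic type'' enters here to exclude Euclidean triangles in $\Gamma$, whose flat geometry inside $D_\Gamma$ would otherwise permit a cycle of standard trees to shortcut in $\Gamma'$ below $\wg{\Gamma}$; granted this, the same non-backtracking-plus-image-contains-a-cycle argument, applied in~$\Gamma'$, delivers $\wg{\L}\ge\girth{\Gamma'}=\wg{\Gamma}$.
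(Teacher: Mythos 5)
Your reduction to a non-backtracking closed walk in $\Gamma$ does not work, and the failure is exactly at the step you flag as the ``principal technical obstacle.'' The condition $T_{i-1}\cap T_i\cap T_{i+1}=\varnothing$ is a condition on three subsets of $D_\Gamma$; it does \emph{not} prevent $T_{i-1}$ and $T_{i+1}$ from being two \emph{distinct translates} of the same standard tree $T_a$. If $T_{i-1}=T_a$, $T_i=T_b$ and $T_{i+1}=g\cdot T_a$ with $g\ne 1$ chosen so that $T_a\cap T_b$ and $g\cdot T_a\cap T_b$ are different vertices of $T_b$, the triple intersection is empty while your walk of generator types reads $a\to b\to a$, an immediate backtrack. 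So the walk you construct can backtrack arbitrarily often, its image can be a single edge, and no cycle of $\Gamma$ need be contained in it. The tree case makes the gap vivid: when $\Gamma$ is a tree the proposition holds only because no cycle of standard trees exists at all (Remark~\ref{rem:treeDefGrph}), yet a backtracking closed walk in a tree is perfectly consistent, so your argument detects nothing. (There is also a smaller issue upstream: the ``type'' $\sigma_i$ of $T_i$ is only well defined up to conjugacy of standard generators, i.e.\ up to odd-labelled paths in $\Gamma$, so consecutive $\sigma_i,\sigma_{i+1}$ need not even span an edge of $\Gamma$.)

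The genuine content of the proposition is precisely that such ``folded'' configurations cannot make $n$ drop below $\girth\Gamma$, and the paper establishes this with non-positive curvature rather than combinatorics of walks. One fills the loop $\sigma=\bigcup_i[v_i,v_{i+1}]$ (where $v_i=T_i\cap T_{i+1}$) with a reduced disc diagram $D\to C_\Gamma$ (or $D_\Gamma$), assigns the cubical (or Moussong) angles, and applies the combinatorial Gauss--Bonnet theorem. Reducedness forces the link of every interior type-$0$ vertex to immerse into $\link{C_\Gamma}{\cdot}\cong\Gamma$, so each such vertex contributes curvature at most $(4-\girth\gamma)\tfrac\pi2$ for some cycle $\gamma\subseteq\Gamma$; all other cells and vertices are non-positively curved except the $n$ intersection vertices, each contributing at most $\tfrac\pi2$. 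Summing gives $2\pi\le n\tfrac\pi2+(4-\girth\gamma)\tfrac\pi2$, i.e.\ $n\ge\girth\gamma\ge\girth\Gamma$. The weighted inequality requires a further truncation of the label-$\ge3$ intersection corners and a redistribution of curvature (Lemma~\ref{lem:wgSimCycStdTrs}), plus separate case analyses for the hyperbolic-type non-triangle-free graphs (Lemmas~\ref{lem:no C4 in (3,5)} and~\ref{lem:no C5 in (3,6)}); your appeal to ``excluding Euclidean triangles'' does not substitute for these. If you want to salvage a walk-type argument, the walk must be read off in the link of a type-$0$ vertex of the filling diagram, which is exactly where the paper's proof lives.
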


\subsection*{Further questions} We end this introduction with some questions that arise from our work.

\subsubsection*{Complete invariance of weighted girth} It is in general not know whether the weight\-ed girth of the defining graph is an isomorphism invariant within the class of all Artin groups:

\begin{questintro}
    \label{quest:isWgInv}
    Let $\Gamma$ and $\Lambda$ be labelled graphs. If the Artin groups $A_\Gamma$ and $A_\Lambda$ are isomorphic, is it true that $\wg\Gamma=\wg\Lambda$?
\end{questintro}

Note that, in view of Theorem~\ref{thmintro:girIsoInv}, $\girth\Gamma=\girth\Lambda$ and, if such girth is at least~$5$, then $A_\Gamma$ and $A_\Lambda$ are both of hyperbolic type, in which case Theorem~\ref{thmintro:wgIsoInv} applies, giving $\wg\Gamma=\wg\Lambda$. Therefore Question~\ref{quest:isWgInv} reduces to understanding the ``small girth'' cases. When $A_\Gamma$ has girth~$3$ or~$4$, it is not necessarily two dimensional of hyperbolic type and the cycle-of-standard-trees arguments described above do not extend. Remarkably, we can circumvent this limitation in proving Theorem~\ref{thmintro:girIsoInv}: the case case of girth~$3$ is minimal by definition (cycles consist of at least three vertices), while an ad-hoc argument applies to triangle-free graphs that contain a $(2,2,2,2)$-square, which are two dimensional but not of hyperbolic type.

\subsubsection*{Recognising cycle Artin subgroups} As hinted in the ``Strategy of proof'' paragraph, if $C$ is a labelled cycle and $A_\Gamma$ is a two-dimensional Artin group of hyperbolic type, then any group monomorphism $A_C\to A_\Gamma$ induces a cycle of standard trees in~$D_\Gamma$. It is natural to ask the converse question:

\begin{questintro}
    Let $A_\Gamma$ be a two-dimensional Artin group of hyperbolic type and let $\L\subseteq D_\Gamma$ be a (simple) cycle of standard trees. Does there exist a labelled cycle $C$ and a group monomorphism $\phi\colon A_C\to A_\Gamma$ such that $\L$ is the cycle of standard trees associated with~$\phi$?
\end{questintro}

We refer to Question~\ref{quest:classifyCycArtSubGrp} and Question~\ref{quest:classifyCycRaagSubGrp} for two more refined versions of the question.\medskip

\subsubsection*{Spectrum of cycle Artin subgroups} To get a finer invariant than girth, one may consider the whole spectrum $\operatorname{spec}(\Gamma)$,
\[
\{n\in\N:\text{there exists a labelled $n$-cycle $C_n$ and an embedding $A_{C_n}\to A_\Gamma$}\}.
\]
Besides realising $\min\operatorname{spec}(\Gamma)$ as $\girth\Gamma$ in Theorem~\ref{thmintro:girIsoInv}, we are not aware of works directed to the study of this object.

\subsection*{Organisation of the paper} In Section~\ref{sec:genBackGround} we recall some basic definitions and results on Artin groups, specialising on the family of those of dimension~$2$. \par
As a first step to understand what cycle Artin groups embed in two-dimension\-al Artin groups of hyperbolic type, we first need to classify the RAAG subgroups of dihedral Artin groups. We do this in Section~\ref{sec:raagSubGrpOfDih}.\par
In Section~\ref{sec:embeddCycArtGrp} we introduce the definition of a cycle of standard trees and show that embeddings of cycle Artin groups determine cycles of standard trees in the Deligne complex.\par
In Section~\ref{sec:comb gbt} we show how to associate a disc diagram to a cycle of standard trees and recall the combinatorial version of the Gauss--Bonnet theorem, which is our main technical tool in the paper. In Section~\ref{sec:lenAndWgCycStdTrs} we apply the combinatorial Gauss--Bonnet theorem to such disc diagram to obtain Proposition~\ref{propintro:stdTrsVsDefGrph}.\par
Finally, we derive applications of Proposition~\ref{propintro:stdTrsVsDefGrph}: in Section~\ref{sec:minCycEmbedd} we give the algebraic characterisation of girth (Theorem~\ref{thmintro:girIsoInv}) and weighted girth (Theorem~\ref{thmintro:wgIsoInv}); in Section~\ref{sec:girthCrvGrph} we realise the weighted girth of the defining graph as the girth  of the commutation graph (Theorem~\ref{thmintro:girthCrvGrph}); in Section~\ref{sec:rigiCycArtGrp} we show the isomorphism rigidity of Artin groups whose defining graph is a cycle (Theorem~\ref{thmintro:rigidCycArtGrp}). 

\subsection*{Acknowledgements} I would like to thank Alexandre Martin for his support, and Jacques Darné, Federica Gavazzi, Oli Jones, Giorgio Mangioni and Nicolas Vaskou for interesting conversations. This work was partially supported by the EPSRC Standard Research Grant UKRI1018.

\section{Background on Artin groups}
\label{sec:genBackGround}

We recall here some well-known facts about Artin groups.

\subsection{Artin groups}\label{subsec:artinBG} Throughout the paper we assume that all the defining graphs of our Artin groups are connected, unless otherwise specified. This is no loss of generality, for having a disconnected defining graph is equivalent to splitting as a free product~\cite{behrstock2009thick}*{Proposition 1.3}, and therefore is an isomorphism invariant.  

\begin{defi}[Artin groups]
    Let $\Gamma=(\ver\Gamma,\edge\Gamma)$ be a finite simplicial graph;
    \begin{enumerate}
        \item A \emph{labelling} of~$\Gamma$ is a map $m\colon\edge\Gamma\to\N_{\ge2}$. If $\{s,t\}$ is an edge of~$\Gamma$, then we write $m_{st}$ in place of $m(\{s,t\})$. We call the pair $(\Gamma,m)$ a defining graph\footnote{The terms \emph{Coxeter graph} and \emph{presentation graph} are also common in the literature.};
        \item Let $(\Gamma,m)$ be a defining graph; the \emph{Artin group} associated with $(\Gamma,m)$ is the group $A_\Gamma$ with presentation
        \[
        \Span{\ver\Gamma\mid\text{for every $\{s,t\}\in\edge\Gamma$, }\underbrace{sts\cdots}_{m_{st}}=\underbrace{tst\cdots}_{m_{st}}}.
        \]
        The elements of $\ver\Gamma$ are often called the \emph{standard generators} of the presentation;
        \item the \emph{Coxeter group} associated with $\Gamma$ is the quotient $W_\Gamma$ of $A_\Gamma$ by the normal closure of the subgroup generated by $\{s^2:s\in\ver\Gamma\}$.
    \end{enumerate}
\end{defi}

We will often forget about the labelling and refer to $A_\Gamma$ as the Artin group associated with~$\Gamma$. \par 

If~$\Gamma$ is a graph and $\Lambda\subseteq\Gamma$ is a subgraph, recall that~$\Lambda$ is said to be the \emph{full subgraph of~$\Gamma$ induced by $\ver\Lambda$} (or just a full subgraph, or an induced subgraph) if every edge of~$\Gamma$ connecting two vertices of~$\Lambda$ is also an edge of~$\Lambda$.\par
For every subset $S$ of vertices of~$\Gamma$ we can consider the subgroup~$\Span{S}_{A_\Gamma}$ of $A_\Gamma$ generated by $S$. Such a subgroup is called the \emph{standard parabolic subgroup} of $A_\Gamma$ generated by~$S$ and is denoted $A_S$. If~$\Gamma_S$ denotes the full subgraph of~$\Gamma$ spanned by~$S$, then $A_S$ is  isomorphic to~$A_{\Gamma_S}$ by a result of van der Lek~\cite{van1983homotopy}. A subgroup of $A_\Gamma$ that is conjugated to a standard parabolic subgroup is called a \emph{parabolic subgroup} of $A_\Gamma$. 

\begin{defi}[families of Artin groups]
    An Artin group $A_\Gamma$ is said to be:
    \begin{enumerate}
        \item \emph{dihedral} if $\abs{\ver\Gamma}=2$;
        \item \emph{right angled} if all the labels of $\Gamma$ are $2$'s, in which case we say that $A_\Gamma$ is a right-angled Artin group (RAAG);
        \item of \emph{spherical type} if the associated Coxeter group $W_\Gamma$ is finite;
        \item \emph{two dimensional} if every standard parabolic subgroup of $A_\Gamma$ of spherical type corresponds to a subgraph of at most two vertices.
    \end{enumerate}
\end{defi}

Note that these properties are, in principle, properties of the defining graph, and not group theoretical. In fact, being right angled and two dimensional are actually group-theoretical properties.\par
Baudisch proved that in a right-angled Artin group every two elements either commute or generate a non-abelian free subgroup~\cite{baudisch1981subgroups}. It follows that, if $A_\Gamma$ and $A_\Lambda$ are isomorphic Artin groups and $A_\Gamma$ is a RAAG, then so is $A_\Lambda$. In view of this observation, we will write $R_\Gamma$ in place of $A_\Gamma$, whenever the latter is a RAAG. (Such choice of notation is slightly unconventional but helpful in the arguments of this paper.)\par
Being two dimensional is a group-theoretical property by the following characterisation, which is known to the experts. We streamline the proof for the sake of completeness.

\begin{prop}
    \label{prop:2d_iso_inv}
    The following conditions are equivalent for an Artin group $A_\Gamma$:
    \begin{enumerate}
        \item $A_\Gamma$ is two-dimensional;
        \item if there is $H\le A_\Gamma$ such that $H\cong\Z^n$, then $n\le 2$.
    \end{enumerate}
    In particular, the property of being two-dimensional is an isomorphism invariant in the class of Artin groups.
\end{prop}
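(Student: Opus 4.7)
The plan is to sandwich two-dimensionality between cohomological and abelian-subgroup-rank bounds. For $(1)\Rightarrow(2)$, I would appeal to the fact that a two-dimensional Artin group admits a two-dimensional classifying space: the (modified) Deligne complex of Charney--Davis has dimension equal to the maximal rank of a spherical-type standard parabolic, and the $K(\pi,1)$ conjecture is known for two-dimensional Artin groups by work of Charney--Davis and Hendriks. This yields $\operatorname{cd}(A_\Gamma)\le 2$, which rules out any $\Z^n$ with $n\ge 3$ sitting inside $A_\Gamma$.

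For $(2)\Rightarrow(1)$, assume $A_\Gamma$ is not two-dimensional; the plan is to produce a copy of $\Z^3$. By definition there exists a full subgraph $\Gamma'\subseteq\Gamma$ on at least three vertices with $W_{\Gamma'}$ finite, and van der Lek's theorem gives $A_{\Gamma'}\hookrightarrow A_\Gamma$. Passing to a $3$-vertex full subgraph of $\Gamma'$ (still spherical-type, as the property is inherited by full subgraphs), one reduces to exhibiting $\Z^3$ inside a rank-$3$ spherical-type Artin group. In the paper's convention (no edge $=$ no relation), any such defining graph is a triangle with label triple, up to permutation, one of
\[
(2,2,m)\;(m\ge 2),\quad (2,3,3),\quad (2,3,4),\quad (2,3,5),
\]
corresponding to the spherical Coxeter types $A_1\times I_2(m)$, $A_3$, $B_3$ and $H_3$.

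For the reducible case $(2,2,m)$, $A_{\Gamma'}\cong\Z\times A_{I_2(m)}$ and it suffices to find a $\Z^2$ inside the dihedral factor, which is generated by the infinite cyclic centre together with any standard generator. In each of the three irreducible cases, I would let $s,u$ be the two vertices joined by the label-$2$ edge: by van der Lek, $\langle s,u\rangle\cong\Z^2$ (a standard parabolic of type $A_1\times A_1$). Adjoining the generator $z$ of the infinite cyclic centre $Z(A_{\Gamma'})$ produces the abelian subgroup $\langle s,u,z\rangle$; to verify it has rank $3$ I would argue that the centre of an irreducible spherical-type Artin group intersects any proper standard parabolic trivially, a standard fact checkable in each of the three exceptional cases via the Coxeter quotient paired with the abelianisation.

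The main obstacle is the invocation of the $K(\pi,1)$ conjecture for $(1)\Rightarrow(2)$: while it is known for two-dimensional Artin groups, it is a non-trivial input whose clean reduction to $\operatorname{cd}(A_\Gamma)\le 2$ I am relying on as a black box. In the other direction the casework becomes elementary once one has the classification of rank-$3$ spherical defining graphs, and the $\Z^3$-subgroups are produced uniformly by the \emph{centre $\oplus$ commuting standard parabolic} recipe above.
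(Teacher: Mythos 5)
Your proposal follows essentially the same route as the paper: the forward direction via the $K(\pi,1)$ conjecture for two-dimensional Artin groups, giving geometric (hence cohomological) dimension at most $2$ and thereby excluding $\Z^3$; the reverse direction by locating a spherical-type triangle and manufacturing a $\Z^3$ from centres of parabolic subgroups. The only difference is cosmetic: the paper takes $\Span{v,z_e,z_\Delta}$ for a flag $A_v<A_e<A_\Delta$ of irreducible spherical parabolics with $m_e\ge3$, whereas you take the $A_1\times A_1$ parabolic $\Span{s,u}$ together with the central generator; both constructions rest on the same underlying fact, namely that no nontrivial power of the central (Garside) element of an irreducible spherical-type Artin group lies in a proper standard parabolic subgroup.

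One concrete warning: your proposed verification of that fact ``via the Coxeter quotient paired with the abelianisation'' would not go through. For type $H_3$ the centre is generated by the Garside element $\Delta$, which maps to $w_0=-\operatorname{id}$ in $W(H_3)$ and to $15$ in the abelianisation $\Z$; this rules out $\Delta=s^iu^j$, but $\Delta^2=s^iu^j$ with $i,j$ even and $i+j=30$ is compatible with both quotients, so these invariants cannot show $\Span{z}\cap\Span{s,u}=1$ (the same issue arises for $A_3$ and $B_3$). You genuinely need a non-abelian input, e.g.\ that standard parabolics of spherical-type Artin groups are closed under Garside normal forms, or the known description of their centralisers. To be fair, the paper asserts the analogous rank-$3$ claim without proof and implicitly relies on the same input, so this is a matter of supplying the right citation rather than a flaw in the strategy.
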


\begin{proof}
    Direct implication. By the solution of the $\operatorname{K}(\pi,1)$-conjecture for two-dimen-sional Artin groups~\cite{charney1995k}, $A_\Gamma$ admits as a classifying space a CW-complex of dimension at most~$2$, i.e. has geometric dimension at most~$2$. The latter gives an upper bound on the cohomological dimension of $A_\Gamma$ (and on the maximal dimension of a free abelian subgroup, in particular).\par
    Reverse implication. Assume that $A_\Gamma$ is not two-dimensional. Then there exists a triangle subgraph $\Delta\subseteq\Gamma$ such that the parabolic subgroup $A_\Delta$ is of spherical type. It is not a loss in generality to assume that at most one of the edges of $\Delta$ has label~$2$, for otherwise $A_\Delta$ would split as a direct product of standard parabolic subgroups (i.e. $A_\Delta$ would be \emph{reducible}). Let $e$ be and edge of $\Delta$ of label at least~$3$ and let $v$ be a vertex of $e$. The groups $A_v$, $A_e$ and $A_\Delta$ are irreducible and of spherical type and hence their centres are infinite cyclic~\cite{deligne1972immeubles}*{Theorem 4.21}, say generated by $v$, $z_e$ and $z_\Delta$ respectively. Then the subgroup of $A_\Gamma$ generated by $\{v,z_e,z_\Delta\}$ is isomorphic to~$\Z^3$.
\end{proof}

\subsection{Deligne complex} We use the parabolic subgroups of $A_\Gamma$ to construct a simplicial complex $A_\Gamma$ acts on. 

\begin{defi}[Deligne complex]
    Let $\Gamma$ be a finite labelled graph; the \emph{simplicial Deligne complex} (or simply the Deligne complex) $D_\Gamma$ associated with $\Gamma$ is the simplicial realisation of the partially ordered set
    \[
    \{g\cdot A_\Lambda:\text{$g\in A_\Gamma$ and $\Lambda\subseteq\Gamma$ such that $A_{\Lambda}$ is of spherical type}\},
    \]
    ordered by inclusion.
\end{defi}

More explicitly, $D_\Gamma$ is the simplicial complex defined as follows:
\begin{itemize}
    \item vertices of $D_\Gamma$ are in bijection with left cosets of standard parabolic subgroups of $A_\Gamma$ of spherical type;
    \item for every $n\in\N$ and every $g\in A_\Gamma$, the vertices $g\cdot A_{\Gamma_0},\dots,g\cdot A_{\Gamma_n}$ span an $n$-simplex of $D_\Gamma$ if $\Gamma_0\subsetneq\dots\subsetneq\Gamma_n$. 
\end{itemize}

The group $A_\Gamma$ acts on $D_\Gamma$ by left multiplication and the action is cobounded and without inversion, i.e. if an element of $A_\Gamma$ fixes a simplex set-wise, then it fixes it point-wise. We denote by $K_\Gamma$ the subcomplex of $D_\Gamma$ that corresponds to the simplicial realisation of 
\[
\{A_\Lambda:\text{$\Lambda\subseteq\Gamma$ such that $A_\Lambda$ is of spherical type}\}
\]
and call it the \emph{canonical fundamental domain} for the action $A_\Gamma\actson D_\Gamma$. It is straightforward to check that $K_\Gamma$ is indeed a fundamental domain.\par
For every induced subgraph $\gamma$ of $\Gamma$, its simplicial realisation $K_\gamma$ is defined analogously and it is a subcomplex of $K_\Gamma$. \par

When $A_\Gamma$ is a two-dimensional Artin group, the parabolic subgroups of $A_\Gamma$ of sperical type are based on at most two vertices, whence follows that $D_\Gamma$ is a simplicial complex of dimension at most two.

\begin{defi}[type of vertices]
    \label{def:type of vertices}
    Let $A_\Gamma$ be a two-dimensional Artin group;
    \begin{itemize}
        \item left cosets of the trivial subgroup are called \emph{type-zero vertices} (or vertices of type~$0$) of $D_\Gamma$;
        \item left cosets of cyclic standard parabolic subgroups are called \emph{type-one vertices} (or vertices of type~$1$) of $D_\Gamma$;
        \item left cosets of dihedral spherical-type parabolic subgroups are called \emph{type-two vertices} (or vertices of type~$2$) of $D_\Gamma$;
    \end{itemize}
\end{defi}

In particular, type-two vertices correspond with left cosets of the form $g\cdot \Span {a,b}$ for some $g\in A_\Gamma$ and some $\{a,b\}\in\edge\Gamma$ and thus we can talk of the \emph{label} $m_{ab}$ of a type-two vertex.\par 
It is often convenient to endow $D_\Gamma$ with a metric. In this paper, the simplicial Deligne complex will be given the Moussong metric, which we now introduce. For every edge $\{a,b\}\in\edge\Gamma$, we put a metric on the triangle with vertices $\Span\varnothing$, $\Span a$ and $\Span{a,b}$ as follows:
\begin{itemize}
    \item the edge $\{\Span\varnothing,\Span a\}$ is given unit length;
    \item the angle at $\Span a$ is right-angled;
    \item the angle at $\Span{a,b}$ is $\frac{\pi}{2m_{ab}}$
\end{itemize}
and the remaining edges and angles are such that such triangle is euclidean. When gluing two triangles along a common edge, the gluing map is an isometry. We refer to Figure~\ref{fig:deligneTriangle}.

\begin{figure}
    \centering
    \includegraphics[width=0.5\linewidth]{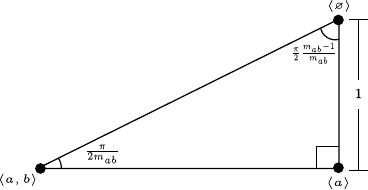}
    \caption{The Moussong metric on a triangle of $D_\Gamma$.}
    \label{fig:deligneTriangle}
\end{figure}

\begin{thm}[\citelist{\cite{charney1995k}*{Proposition~4.4.5}\cite{martin2022acylindrical}*{Section~3.1}}]
    Let $A_\Gamma$ be a two-dimension\-al Artin group. The Moussong metric on $D_\Gamma$ is a \cat 0 metric. Moreover, if $A_\Gamma$ is of hyperbolic type (see Definition~\ref{def:hypTyp}), then $D_\Gamma$ admits a \cat{-1} metric.
\end{thm}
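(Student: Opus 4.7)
The plan is to verify the local-to-global curvature criterion for piecewise Euclidean (resp. piecewise hyperbolic) complexes: I would show that $D_\Gamma$ is simply connected and has non-positive (resp. strictly negative) curvature locally. For simple connectedness, I would invoke the $K(\pi,1)$-theorem of Charney--Davis for two-dimensional Artin groups, which shows that $D_\Gamma$ is contractible; combined with the local $\cat 0$ condition, this yields global $\cat 0$ via the Cartan--Hadamard theorem for locally $\cat 0$ complexes, and analogously in the $\cat{-1}$ setting. Since $A_\Gamma$ is two dimensional, $D_\Gamma$ has dimension at most $2$, so by Gromov's link condition the local curvature condition reduces to checking that the link of each vertex, viewed as a metric graph with edge lengths equal to the corresponding Moussong angles, has girth at least $2\pi$ (strictly greater than $2\pi$ in the $\cat{-1}$ case).

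The core of the argument is the link calculation, which splits into three cases according to the vertex type (Definition~\ref{def:type of vertices}). At a type-$2$ vertex $gA_{\{s,t\}}$, the link is (a subdivision of) the Coxeter complex of the dihedral group $I_2(m_{st})$, and the choice of angle $\pi/(2m_{st})$ at type-$2$ corners makes its girth equal to $2\pi$. At a type-$1$ vertex $gA_s$, the link is a bipartite metric graph whose shortest cycles encode paths in the star of $s$ in $\Gamma$; the right angles at type-$1$ corners, combined with the angles at type-$2$ corners, give systole exactly $2\pi$. At a type-$0$ vertex, the link is built from the combinatorics of the canonical fundamental domain $K_\Gamma$ near $A_\varnothing$, and the edge lengths are designed so that every elementary cycle has total length at least $2\pi$; this is where the global structure of $\Gamma$ intervenes and is the most delicate case.

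For the $\cat{-1}$ upgrade under the hyperbolic type hypothesis, I would replace each Euclidean triangle of $D_\Gamma$ by a hyperbolic triangle carrying the same angle data (suitably rescaled), following Martin--Przytycki. The main obstacle is to show that, under hyperbolic type, the link girths become strictly greater than $2\pi$, which is precisely what allows Gromov's link condition to yield a $\cat{-1}$ metric after the hyperbolic rescaling. The critical case is again the type-$0$ link: its girth equals $2\pi$ exactly when $\Gamma$ contains a Euclidean triangle (a triple of labels with $\tfrac{1}{m_{12}}+\tfrac{1}{m_{13}}+\tfrac{1}{m_{23}}\ge 1$) or a $(2,2,2,2)$-square subgraph, and these are precisely the configurations excluded by the hyperbolic type hypothesis. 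Putting these pieces together would establish both statements of the theorem.
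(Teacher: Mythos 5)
The paper does not prove this statement; it is quoted as background from Charney (Proposition~4.4.5) and Martin--Przytycki (Section~3.1), so there is no internal proof to compare against. Your sketch reproduces the standard argument from those sources: simple connectedness plus Gromov's link condition in dimension~$2$, with the link computation split by vertex type, and the type-$0$ link (the barycentric subdivision of $\Gamma$ with edge lengths $\frac{\pi}{2}-\frac{\pi}{2m_e}$ per half-edge) correctly identified as the case where $2$-dimensionality, respectively hyperbolic type, enters via the inequality $\sum_e(1-\frac1{m_e})\pi\ge 2\pi$ (resp.\ $>2\pi$) for cycles of $\Gamma$.

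Two points need repair. First, deducing simple connectedness of $D_\Gamma$ from the $K(\pi,1)$ theorem for two-dimensional Artin groups is circular: that theorem is \emph{proved} by establishing that the Moussong metric on $D_\Gamma$ is $\cat 0$. What you should invoke instead is that the Deligne complex of any Artin group is simply connected (an elementary fact about the construction, independent of the $K(\pi,1)$ conjecture); contractibility then follows from, rather than feeds into, the $\cat0$ conclusion. Second, the $\cat{-1}$ step as written would fail: a hyperbolic triangle cannot carry ``the same angle data'' as a Euclidean one, and it is false that all link girths become strictly greater than $2\pi$ under the hyperbolic-type hypothesis --- the links of type-$1$ and type-$2$ vertices have girth exactly $2\pi$ and this does not change. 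The actual construction keeps the angles $\frac\pi2$ and $\frac{\pi}{2m_{st}}$ at the type-$1$ and type-$2$ corners fixed and shrinks only the angle at the type-$0$ corner when passing to hyperbolic triangles; the link condition at type-$1$ and type-$2$ vertices is then preserved verbatim, and the strict slack at type-$0$ links (no Euclidean triangles, no $(2,2,2,2)$-squares) is exactly what absorbs the angle loss there. With those corrections your outline matches the cited proofs.
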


Alongside the simplicial Deligne complex, we consider its cubical counterpart.

\begin{defi}[cubical Deligne complex]
    Let $\Gamma$ be a finite labelled graph; the \emph{cubical Deligne complex} $C_\Gamma$ associated with $\Gamma$ is the cube complex with the same vertex set as $D_\Gamma$ and where every spherical-type $k$-clique of $\Gamma$ spans a unitary $k$-cube.
\end{defi}

More explicitly, let $g\in A_\Gamma$ and let $\Lambda\subseteq\Gamma$ be a subgraph of spherical type. For every subgraph $M\subseteq\Lambda$, if $\abs{\ver\Lambda\setminus\ver M}=k$, then the vertices $\{gA_{M'}:M\subseteq M'\subseteq\Lambda\}$ span a unitary $k$-cube of $C_\Gamma$.\par
Recall that a graph is \emph{triangle free} if each of its cycle subgraphs has at least four edges.

\begin{thm}[\cite{charney1995k}*{Theorem 4.3.5}]
    Let $A_\Gamma$ be a triangle-free Artin group. The cubical Deligne complex (endowed with the standard cubical metric) is a \cat 0 cube complex.
\end{thm}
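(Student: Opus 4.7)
My plan is to apply Gromov's link criterion: a cube complex is \cat{0} if and only if it is simply connected and the link of every vertex is a flag simplicial complex. Both conditions are made tractable by the fact that triangle-freeness of $\Gamma$ forces every spherical-type subgraph to have at most two vertices, so $C_\Gamma$ has dimension at most~$2$, and consequently all vertex links are $1$-dimensional graphs.

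For the flag condition, it suffices to show that no such link contains a triangle. I would compute the link at each of the three types of vertices. The link of a type-$0$ vertex $gA_\emptyset$ has vertices in bijection with $\ver\Gamma$ (one per edge $\{gA_\emptyset, gA_s\}$) and edges in bijection with $\edge\Gamma$ (one per corner of a $2$-cube $\{gA_\emptyset, gA_s, gA_t, gA_{s,t}\}$); as a graph it is naturally isomorphic to $\Gamma$, and triangle-freeness is then the hypothesis. The link of a type-$1$ vertex $gA_s$ is bipartite: the type-$0$ neighbors $\{h\}$ for $h \in gA_s$ form one part, the type-$2$ neighbors $gA_{s,t}$ for $\{s,t\} \in \edge\Gamma$ form the other, and every corner of a $2$-cube at $gA_s$ crosses the parts. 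The link of a type-$2$ vertex $gA_{s,t}$ is bipartite for the analogous reason, with parts indexed by cosets of the form $hA_s$ and $hA_t$ with $h \in gA_{s,t}$. Bipartite graphs contain no triangles, so in all three cases the link is flag.

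For simple connectivity, I would compare $C_\Gamma$ with the simplicial Deligne complex $D_\Gamma$. Since triangle-freeness implies two-dimensionality, the Moussong metric makes $D_\Gamma$ a \cat{0} space, hence simply connected. The two complexes share the same vertex set, and each $2$-cube $\{gA_\emptyset, gA_s, gA_t, gA_{s,t}\}$ of $C_\Gamma$ corresponds bijectively to a \emph{diamond} of $D_\Gamma$: the union of the two $2$-simplices $\{gA_\emptyset, gA_s, gA_{s,t}\}$ and $\{gA_\emptyset, gA_t, gA_{s,t}\}$ glued along the diagonal edge $\{gA_\emptyset, gA_{s,t}\}$. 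Each $2$-simplex of $D_\Gamma$ has a unique such diagonal and so belongs to a unique diamond, meaning that diamonds tile the $2$-skeleton of $D_\Gamma$. I would then construct a cellular homeomorphism $D_\Gamma \to C_\Gamma$ that is the identity on vertices and on non-diagonal edges, and sends each diamond homeomorphically onto the corresponding square, pushing the diagonal into its interior. Simple connectivity of $C_\Gamma$ then follows from that of $D_\Gamma$.

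The main obstacle is the simple-connectivity step. The flag-link verification is local, falling out immediately from the bipartite structure and from the isomorphism between the type-$0$ link and $\Gamma$ itself. Relating the cubical and simplicial Deligne complexes at the level of homotopy type is more delicate: the diamond decomposition is clean, but some bookkeeping is required to ensure that the diamond-to-square homeomorphisms are consistent across diamonds sharing non-diagonal edges, so that they glue to a well-defined global homeomorphism.
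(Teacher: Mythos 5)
The paper offers no proof of this statement --- it is quoted verbatim from Charney--Davis --- and your argument is essentially theirs: verify Gromov's link condition and simple connectivity. The outline is sound, but the reduction ``flag $\Leftrightarrow$ no triangles in the link'' hides the one place where genuine group theory enters. For a two-dimensional cube complex, Gromov's criterion requires each link to be a \emph{simplicial} graph (no bigons) with no triangles, i.e.\ to have girth at least~$4$. At type-$0$ and type-$1$ vertices the absence of bigons is automatic from your coset bookkeeping: a square is determined by a corner together with the two edges it joins there. At a type-$2$ vertex $g\Span{s,t}$, however, two elements $h,h'\in g\Span{s,t}$ contribute the same edge of the link exactly when $h^{-1}h'\in\Span s\cap\Span t$, so simpliciality of the link is equivalent to $\Span s\cap\Span t=\{1\}$ inside the dihedral Artin group on $\{s,t\}$. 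This is true --- it is the content of the girth bound of $2m_{st}\ge 4$ for these links, \cite{appel1983artin}*{Lemma~6}, which the paper invokes repeatedly --- but it does not follow from bipartiteness and must be stated and justified. With that fact added, your flag verification is complete.

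The simple-connectivity step, which you flag as the main obstacle, is actually the lighter half. The simplicial Deligne complex is precisely the subdivision of the cubical one obtained by adding the diagonal $\{gA_\varnothing,\, g\Span{s,t}\}$ to each square: every edge of $D_\Gamma$ is either an edge of $C_\Gamma$ or such a diagonal, and every $2$-simplex lies in exactly one diamond, so the two complexes share the same underlying topological space and no gluing consistency needs to be arranged. Do note, though, that deducing simple connectivity of $D_\Gamma$ from the \cat 0-ness of the Moussong metric is a heavier input than necessary and, relative to the original source (where the cubical and Moussong results sit side by side), risks circularity; it is cleaner to quote simple connectivity (indeed contractibility) of the Deligne complex directly, as the paper itself does from \cite{charney1995k}, since that is established by the complex-of-groups argument independently of any metric.
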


It is straightforward to see that $A_\Gamma$ acts on $D_\Gamma$ and on $C_\Gamma$ by left multiplication and the action is without inversion. The simplicial Deligne complex can be obtained by suitably subdividing the cubical Deligne complex, therefore all the results about the action of $A_\Gamma$ on $D_\Gamma$ carry over to the action on $C_\Gamma$. In particular, in dimension two the elliptic elements are fully classified.

\begin{lem}[\citelist{\cite{martin2022acylindrical}*{Section~4}\cite{hagen2024extra}*{Lemma~2.27}}]
    Let $A_\Gamma$ be a two-dimensional Artin group and let $g\in A_\Gamma\smallsetminus\{1\}$ be an element acting elliptically on $D_\Gamma$:
    \begin{enumerate} 
        \item if $g$ is conjugated to a non-trivial power of a standard generator, then its fixed-point set is a tree contained in $D_\Gamma^{(1)}$, called the \emph{standard tree} $T_g$ associated with $g$;
        \item otherwise, the fixed-point set of $g$ is precisely a vertex of type~$2$.
    \end{enumerate}
    For every standard generator $a\in\ver\Gamma$, the global stabiliser of $T_a$ coincides with the centraliser (equivalently, normaliser) of $a$ in $A_\Gamma$. Such centraliser is of the form $\Span a\times F$, where $F$ is a finite-rank free group.
\end{lem}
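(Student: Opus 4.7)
The classification of fixed-point sets rests on two observations. First, by construction of $D_\Gamma$, the stabiliser of a vertex $hA_\Lambda$ under left multiplication is the parabolic $hA_\Lambda h^{-1}$, which by two-dimensionality is trivial, infinite cyclic (a conjugate of $\Span s$ for some $s\in\ver\Gamma$), or dihedral according as the type is $0$, $1$, or $2$; in particular, every $2$-simplex of $D_\Gamma$ has trivial stabiliser. Second, because the action is without inversion, $\fix g$ is a subcomplex of $D_\Gamma$, and it is convex in the \cat 0 Moussong metric. My plan is to argue that a nontrivial elliptic $g$ must fix a vertex of positive type: if $g$ fixes a type-$1$ vertex, then $g$ belongs to a cyclic parabolic, hence is conjugate to a nontrivial power of a standard generator; otherwise $\fix g$ contains only type-$2$ vertices, and convexity combined with the fact that two distinct type-$2$ vertices lie in no common simplex (so any geodesic connecting them would pass through a lower-type vertex, which would then have to belong to $\fix g$) forces $\fix g$ to reduce to a single type-$2$ vertex, yielding~(2).

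For the tree structure in~(1), after conjugation we may take $g=a^k$ for some $a\in\ver\Gamma$ and $k\neq 0$; standard unique-roots properties of two-dimensional Artin groups then give $\fix(a^k)=\fix(a)$, so $T_g=T_a$ depends only on~$a$. Since the stabiliser of every $2$-simplex of $D_\Gamma$ is trivial, $T_a$ contains no $2$-simplex and therefore lies in $D_\Gamma^{(1)}$; as a convex, connected, $1$-dimensional subcomplex of the \cat 0 complex $D_\Gamma$, the set $T_a$ is automatically a tree.

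For the stabiliser statement, the inclusion $\centr{A_\Gamma}{a}\subseteq\stab{A_\Gamma}{T_a}$ is immediate: if $gag^{-1}=a$ then $g\cdot T_a=g\cdot\fix(a)=\fix(gag^{-1})=T_a$. Conversely, if $g\cdot T_a=T_a$, then $g\cdot\Span a$ is a type-$1$ vertex $h\Span a$ of $T_a$, so $g=ha^j$ for some $j$; the condition $h\Span a\in\fix a$ reads $a\in\Span{hah^{-1}}$, i.e.\ $a=(hah^{-1})^k$ for some $k$, and an abelianisation argument (using that every standard generator has infinite order in the abelianisation of $A_\Gamma$) forces $k=1$ and hence $hah^{-1}=a$, so $g\in\centr{A_\Gamma}{a}$. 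The same argument identifies the centraliser with the normaliser, since a candidate inversion $gag^{-1}=a^{-1}$ is ruled out modulo the commutator subgroup. Finally, $\Span a$ is the pointwise stabiliser of $T_a$ and is central in $\centr{A_\Gamma}{a}$, so the quotient $\centr{A_\Gamma}{a}/\Span a$ acts on the tree $T_a$ with trivial vertex stabilisers; by Bass--Serre theory this quotient is free, coboundedness of the $A_\Gamma$-action on $D_\Gamma$ forces finite generation, and the central extension splits because $\Span a$ is a direct summand of each vertex stabiliser, giving $\centr{A_\Gamma}{a}\cong\Span a\times F$. The main technical hurdle I anticipate is this last structural assertion, which requires careful analysis of the $\centr{A_\Gamma}{a}$-action on~$T_a$ to justify both the triviality of vertex stabilisers in the quotient and the splitting of the central extension.
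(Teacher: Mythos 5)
This lemma is not proved in the paper: it is imported verbatim from the cited references (Martin--Przytycki, Section~4, and Hagen--Martin--Sisto, Lemma~2.27), so there is no in-paper proof to compare against. Your outline of parts (1) and (2) is essentially the standard argument and is sound: vertex stabilisers are the corresponding parabolics, two-dimensionality kills the stabilisers of $2$-simplices, the fixed-point set of a non-inverting simplicial isometry is a convex subcomplex, and a convex subcomplex of the $1$-skeleton is a tree, while a convex subcomplex containing only type-$2$ vertices is a single vertex. The identification of $\stab{A_\Gamma}{T_a}$ with $\centr{A_\Gamma}{a}$ via the abelianisation (each standard generator having infinite order in $A_\Gamma^{\mathrm{ab}}\cong\Z^{\operatorname{V}_{\textup{odd}}(\Gamma)}$) is also correct.

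The gap is exactly where you anticipated it. The claim that $\centr{A_\Gamma}{a}/\Span a$ acts on $T_a$ with \emph{trivial} vertex stabilisers is false. A type-$2$ vertex $h\Span{b,c}$ of $T_a$ has stabiliser in $\centr{A_\Gamma}{a}$ equal to the centraliser of $a$ inside $h\Span{b,c}h^{-1}$, which (for $a$ conjugate into $\Span{b,c}$) is isomorphic to $\Z^2$ — it contains the conjugate of the centre of the dihedral parabolic as well as $\Span a$ — so its image in the quotient is infinite cyclic, not trivial. Only the type-$1$ vertices of $T_a$ have stabiliser exactly $\Span a$, hence trivial image. The conclusion that the quotient is free survives: edge stabilisers in the quotient are trivial and vertex stabilisers are trivial or infinite cyclic, so Bass--Serre gives a free product of copies of $\Z$ with a free group, which is free; but as written your justification is wrong, and it also hides the point where finite rank comes from (one must check that $T_a/\centr{A_\Gamma}{a}$ is a finite graph, e.g.\ by showing that any two type-$1$ vertices of $T_a$ that are cosets of the same $\Span b$ lie in the same $\centr{A_\Gamma}{a}$-orbit). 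Two smaller caveats: the equality $\fix(a^k)=\fix(a)$ you invoke for $k\ne\pm1$ rests on parabolic subgroups of two-dimensional Artin groups being closed under taking roots, which is a genuine theorem rather than a formality; and the splitting of the central extension is automatic because the quotient is free (lift a free basis), not because $\Span a$ is a direct summand of each vertex stabiliser.
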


Based on their fixed-point sets, elliptic elements are divided into \emph{tree-elliptic} and \emph{vertex-elliptic}. We use this characterisation of elements acting elliptically on the Deligne complex to show that dihedral parabolic subgroups are inclusion-wise maximal amongst the subgroups of $A_\Gamma$ that are isomorphic to a dihedral Artin group.

\begin{lem}
    \label{lem:dih_par_are_max}
    Let $A_\Gamma$ be a two-dimensional Artin group and let $\{a,b\}$ be an edge of $\Gamma$ with label~$m_{ab}\ge3$. Suppose that there are $g\in A_\Gamma$, $m\in\N_{\ge3}$ and a subgroup $H\le A_\Gamma$ such that $\conj {\Span{a,b}} g\le H$ and $H\cong\da m$. Then $H=\conj{\Span{a,b}}g$.
\end{lem}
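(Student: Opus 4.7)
The plan is to show $H\le\conj{\Span{a,b}}g$ by identifying the centre of $H$ inside the centre of $\conj{\Span{a,b}}g$, forcing $H$ to fix the unique fixed point in $D_\Gamma$ of a specific central element. Up to replacing $H$ by $g^{-1}Hg$, it suffices to treat the case $g=1$, so that $\Span{a,b}\le H\le A_\Gamma$. I write $z_{ab}$ for a generator of the (infinite cyclic) centre of the irreducible spherical Artin group $\Span{a,b}\cong\da{m_{ab}}$, and analogously $z_H$ for a generator of $Z(H)$, which exists and is infinite cyclic since $H\cong\da m$ with $m\ge3$.

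The first key step is a fixed-point computation. The element $z_{ab}$ fixes the type-two vertex $v:=\Span{a,b}$ of $D_\Gamma$, whose stabiliser is exactly $\Span{a,b}$. I will argue that $\fix{z_{ab}^k}=\{v\}$ for every $k\ne0$. By the classification of elliptic elements recalled above, $z_{ab}^k$ is either vertex-elliptic---in which case its fix set is a single type-two vertex, necessarily $v$---or tree-elliptic, i.e.\ conjugate in $A_\Gamma$ to a non-trivial power of a standard generator. The latter case will be ruled out by a rank / parabolic-closure argument: no non-trivial power $a^\ell$ or $b^\ell$ is central in $\Span{a,b}$ when $m_{ab}\ge3$ (the image of each standard generator in $\Span{a,b}/Z(\Span{a,b})$ has infinite order), so the central element $z_{ab}^k$ cannot lie in any proper parabolic of $\Span{a,b}$. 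Consequently, its parabolic closure in $A_\Gamma$ is the rank-two parabolic $\Span{a,b}$, whereas a conjugation to $x^n$ would place the closure inside the rank-one parabolic $\Span x$---contradiction. It then follows that $\centr{A_\Gamma}{z_{ab}^k}\le\stab{A_\Gamma}{v}=\Span{a,b}$, since any element centralising $z_{ab}^k$ sends the unique fixed point $v$ to itself.

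The second step pins down $z_H$. Because $z_H$ centralises $H$ and $\Span{a,b}\le H$, in particular $z_H$ commutes with $z_{ab}$, so the case $k=1$ of the first step gives $z_H\in\Span{a,b}$. Moreover $z_H$ centralises all of $\Span{a,b}$, whence $z_H\in Z(\Span{a,b})=\Span{z_{ab}}$. Hence $z_H=z_{ab}^k$ for some $k\in\Z$, with $k\ne0$ since $z_H$ has infinite order. The argument then closes: $H\le\centr{A_\Gamma}{z_H}=\centr{A_\Gamma}{z_{ab}^k}\le\Span{a,b}$, and combined with the hypothesis $\Span{a,b}\le H$ this yields $H=\Span{a,b}$, as required.

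The hard part will be the fixed-point computation in the first step, and in particular the parabolic-closure argument ruling out the possibility that $z_{ab}^k$ is conjugate into a cyclic parabolic of $A_\Gamma$. This relies on the good behaviour of parabolic closures in $A_\Gamma$ (intersections of parabolic subgroups being parabolic, which is known in the two-dimensional setting) together with the classification of elliptic elements on $D_\Gamma$ recalled above.
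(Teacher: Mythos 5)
Your proof is correct and follows essentially the same route as the paper's: both identify $z_H$ as a non-zero power of $z_{ab}$ and then use the chain $H\le\centr{A_\Gamma}{z_H}\le\stab{A_\Gamma}{\Span{a,b}}=\Span{a,b}$ via the equality $\fix{z_{ab}^k}=\{\Span{a,b}\}$ in $D_\Gamma$. The only difference is that you supply justifications (the parabolic-closure argument ruling out that $z_{ab}^k$ is tree-elliptic, and the derivation $z_H\in\operatorname{Z}(\Span{a,b})$) for two steps that the paper simply asserts.
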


\begin{proof}
    Up to a conjugation in $A_\Gamma$ it is not restrictive to assume that $g=1$. Because $m,m_{ab}\ge3$, both $H$ and $\Span{a,b}$ are spherical irreducible and hence have infinite cyclic centre, say $\operatorname{Z}(H)=\Span{z_H}$ and $\operatorname{Z}(\Span{a,b})=\Span{z_{ab}}$. Since $\Span{a,b}\le H$, it follows that $z_H=z_{ab}^i$ for some $i\in\Z\setminus\{0\}$. It follows that, for every $h\in H$, in $D_\Gamma$
    \[
    h\cdot\Span{a,b}=h\cdot\fix{z_{ab}^i}=h\cdot \fix{z_H}=\fix{\conj{z_H}h}=\fix{z_H}=\fix{z_{ab}^i}=\Span{a,b},
    \]
    that is, $H$ stabilises the vertex $\Span{a,b}$ and hence $H\le\Span{a,b}$.
\end{proof}

\subsection{Hyperbolic-type Artin groups}\label{subsec:hyp-type} By \emph{hyperbolic group} we mean word hyperbolic (or Gromov-hyperbolic). Although very few Artin groups are actually hyperbolic, we can ask when the associated Coxeter group is.

\begin{defi}[hyperbolic type]
    \label{def:hypTyp}
    An Artin group $A_\Gamma$ is of \emph{hyperbolic type} if the associated Coxeter group $W_\Gamma$ is hyperbolic. 
\end{defi}

For a two-dimensional Artin group, being of hyperbolic type is equivalent to simultaneously satisfying the following two conditions~\cite{moussong1988hyperbolic}:
\begin{enumerate}
    \item every $(p,q,r)$-triangle subgraph of $\Gamma$ satisfies $\frac1p+\frac1q+\frac1r<1$;
    \item $\Gamma$ does not contain any $(2,2,2,2)$-square as a full subgraph.
\end{enumerate}

When $A_\Gamma$ is two-dimensional of hyperbolic type, the subgroups of $A_\Gamma$ that are abstractly isomorphic to a dihedral Artin group are classified. The classification of the abelian ones is due to Martin-Przytycki.

\begin{thm}
    [\cite{martin2022acylindrical}*{Corollary~C}]
    \label{thm:classify Z^2 subgrp}
    Let $A_\Gamma$ be a two-dimensional Artin group of hyperbolic type and let $H$ be a subgroup of $A_\Gamma$ that is virtually $\Z^2$. Then one of the following holds:
    \begin{itemize}
        \item $H$ is contained in a dihedral parabolic subgroup of $A_\Gamma$;
        \item $H$ is contained in the centraliser of a conjugate of a standard generator of~$A_\Gamma$. In particular, $H$ contains a non-zero power of a conjugate of a standard generator.
    \end{itemize}
\end{thm}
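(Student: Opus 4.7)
The plan is geometric, based on the action of $A_\Gamma$ on the simplicial Deligne complex $D_\Gamma$, which by the hyperbolic-type hypothesis admits a $\cat{-1}$ metric. After replacing $H$ by a finite-index normal subgroup (the intersection of the $H$-conjugates of any finite-index $\Z^2$ in $H$), I may assume that $H$ contains a normal subgroup $A \cong \Z^2$ of finite index; the goal is then to reach the dihedral-parabolic/centraliser dichotomy first for $A$ and then promote it to $H$ using normality.

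The first step is to locate an infinite-order elliptic element of $A$. The key geometric input is that in a $\cat{-1}$ space two commuting hyperbolic isometries must share the same pair of boundary fixed points and act by proportional translations along their common axis; hence the set of hyperbolic elements of $A$ generates at most a cyclic subgroup. Since $A \cong \Z^2$, this forces some $g \in A$ of infinite order to act elliptically on $D_\Gamma$. By the elliptic-element classification recalled in the excerpt, either $g$ is \emph{vertex-elliptic}, with $\fix g = \{v\}$ a single type-$2$ vertex, or $g$ is \emph{tree-elliptic}, with $g = ka^nk^{-1}$ conjugate to a non-zero power of a standard generator $a$ and $\fix g = T_g$ a standard tree.

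In the vertex-elliptic case, since $A$ centralises $g$ it preserves $\fix g = \{v\}$ pointwise, so $A \le \stab{A_\Gamma}{v}$, a conjugate of a dihedral parabolic subgroup. To promote this to $H$, I observe that $\fix A \subseteq \fix g = \{v\}$ is $H$-invariant by normality of $A$ in $H$; therefore $H$ also fixes $v$ and lies in the dihedral parabolic subgroup $\stab{A_\Gamma}{v}$.

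The tree-elliptic case is the harder one. From $A \le \centr{A_\Gamma}{g} = k(\Span{a} \times F)k^{-1}$, with $F$ a finite-rank free group, together with the fact that abelian subgroups of a free group are cyclic, the intersection of $A$ with the central $\Z$-factor $k\Span{a}k^{-1}$ has rank one, generated by some $z = ka^mk^{-1}$ with $m \ne 0$. This already yields the ``in particular'' clause: $H \supseteq A \ni z$, a non-zero power of a conjugate of a standard generator. The main obstacle is to show that the whole of $H$, and not only $A$, lies in $\centr{A_\Gamma}{kak^{-1}}$. I plan to do this by giving an $A_\Gamma$-intrinsic characterisation of $\Span{z}$ inside $A$ (for instance, as those elements of $A$ whose fixed-point set in $D_\Gamma$ has positive dimension, equivalently the kernel of the translation-length homomorphism describing the $A$-action on $T_g$); since such a characterisation is invariant under conjugation in $A_\Gamma$, it is preserved by $H$, so $H$ normalises $\Span{z}$. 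Finally, using that standard generators of $A_\Gamma$ are not conjugate to their inverses (as witnessed already in the abelianisation $A_\Gamma^{\mathrm{ab}}$), $N_{A_\Gamma}(\Span{z}) = \centr{A_\Gamma}{z} = \centr{A_\Gamma}{kak^{-1}}$, completing the argument.
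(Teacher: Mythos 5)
This statement is not proved in the paper: it is quoted verbatim from Martin--Przytycki (\cite{martin2022acylindrical}*{Corollary~C}), so there is no internal proof to compare against and I am assessing your argument on its own terms. Your overall strategy (pass to a normal finite-index $A\cong\Z^2$, locate an infinite-order elliptic element of $A$ using the $\cat{-1}$ geometry of $D_\Gamma$, then split according to whether it is vertex- or tree-elliptic) is reasonable and close in spirit to the source, and your vertex-elliptic case is correct. One minor gap first: from ``commuting hyperbolic isometries of a $\cat{-1}$ space share an axis'' you cannot yet conclude that the hyperbolic elements of $A$ generate at most a cyclic group, since $\Z^2$ does act faithfully by translations on a line. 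You also need that the action on $D_\Gamma$ is semisimple with discrete translation-length spectrum (it is cocompact on a complex with finitely many shapes), so that the translation-length homomorphism $A\to\R$ has cyclic image and hence nontrivial kernel; that kernel supplies the elliptic element.

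The substantive gap is in the tree-elliptic case. Your proposed intrinsic characterisation of $\Span{z}$ as the set of elements of $A$ with positive-dimensional fixed-point set (and, a fortiori, the claimed equivalence with the kernel of the translation-length homomorphism on $T_g$, which also contains vertex-elliptic elements of $A$ fixing a point of $T_g$) fails exactly when $A$ contains tree-elliptic elements whose standard tree differs from $T_g$. For instance, $A$ could be a conjugate of $\Span{a,b}$ with $m_{ab}=2$: the elements of $A$ with positive-dimensional fixed sets then form $\Span{a}\cup\Span{b}$, which is not a subgroup, and $H$ need not normalise either factor (an $H$ with an element swapping the two conjugacy data is not excluded). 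In that situation the theorem still holds, but via the \emph{first} bullet: the two distinct standard trees are each $A$-invariant (every element of the abelian group $A$ preserves the fixed set of every other), so $A$ fixes their unique intersection vertex, and $\fix{A}$ is then a single $H$-invariant type-$2$ vertex. So your dichotomy must be refined: either some two tree-elliptic elements of $A$ have distinct trees, in which case $H$ fixes a type-$2$ vertex and lands in a dihedral parabolic subgroup; or all tree-elliptic elements of $A$ share the tree $T_g$, in which case they do form exactly $\Span{z}=A\cap k\Span{a}k^{-1}$ (because the pointwise stabiliser of $T_g$ is $k\Span{a}k^{-1}$), the set is conjugation-invariant, and your normalisation argument together with the abelianisation computation $\norm{A_\Gamma}{\Span{z}}=\centr{A_\Gamma}{z}=\centr{A_\Gamma}{kak^{-1}}$ closes the proof. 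With these two repairs the argument is correct; note also that Martin--Przytycki's own route is shorter, using ellipticity of $H$ on the coned-off Deligne complex via acylindricity, as in the paper's Lemma~\ref{lem:dihedral of hyp-type are parab}.
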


Turning to the non-abelian case, Vaskou gives a full classification of the subgroups of two-dimensional Artin groups that are isomorphic to dihedral Artin groups~\cite{vaskou2022isomorphism}*{Theorem~D}. From its classification, it turns out that the exotic dihedral subgroups (i.e. those that are not conjugated into a dihedral parabolic subgroup) only occur in Artin groups that are not of hyperbolic type. On the contrary, being of hyperbolic type gives a more restrictive classification. For convenience, we add a proof of this fact. 

\begin{lem}
    \label{lem:dihedral of hyp-type are parab}
    Let $A_\Gamma$ be a two-dimensional Artin group of hyperbolic type and let $H\le A_\Gamma$ be a subgroup that is virtually isomorphic to a dihedral Artin group $\da m$, for some $m\in\N_{\ge3}$. Then $H$ is contained in a dihedral parabolic subgroup.
\end{lem}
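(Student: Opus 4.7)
My plan is to exploit the \cat{-1} Moussong metric on $D_\Gamma$, which is available because $A_\Gamma$ is of hyperbolic type, together with the acylindricity of the $A_\Gamma$-action due to Martin--Przytycki. I would fix a finite-index subgroup $K \le H$ with $K \cong \da m$, and let $z \in K$ generate the infinite cyclic centre $Z(K)$. The central claim to establish is that $z$ is vertex-elliptic on $D_\Gamma$, so that $\fix{z}$ is a single type-two vertex whose dihedral parabolic stabiliser will be the desired~$P$.

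I would prove this by ruling out the two alternatives. The element $z$ cannot be loxodromic: $\centr{A_\Gamma}{z} \supseteq K$ is not virtually cyclic, whereas loxodromic centralisers under acylindrical actions on hyperbolic spaces are. The tree-elliptic case is the delicate one. If $z$ were a non-trivial power of a conjugate $y = gag^{-1}$ of a standard generator, then $K \le \centr{A_\Gamma}{y} = \langle y\rangle \times F$ for some finite-rank free group~$F$. Writing the standard generators of $K$ in this direct product as $s_K = (y^{i_s}, f_s)$ and $t_K = (y^{i_t}, f_t)$, the non-triviality of $[s_K, t_K]$ forces $[f_s, f_t] \ne 1$, so $\langle f_s, f_t\rangle$ is a rank-two free subgroup of~$F$; but the $m$-letter braid relation between $s_K$ and $t_K$ (with $m \ge 3$) projects to an identical braid relation between $f_s$ and $f_t$, which cannot hold in a rank-two free group.

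Having established that $z$ is vertex-elliptic, let $v \in D_\Gamma$ be its unique fixed vertex and set $P := \stab{A_\Gamma}{v}$, which is a dihedral parabolic subgroup. Since $K$ centralises $z$ it preserves $\fix{z} = \{v\}$, and since $A_\Gamma$ acts without inversions we conclude $K \le P$. The step I expect to be the main obstacle is upgrading this inclusion from $K$ to $H$. For each $h \in H$, the intersection $K_0 := K \cap hKh^{-1}$ has finite index in $K$ and is therefore not virtually cyclic; by construction $K_0$ fixes both $v$ and $h \cdot v$, and so by convexity of fixed-point sets in \cat 0 spaces it fixes the entire geodesic between them. If this geodesic were non-trivial, it would traverse the interior of a $1$- or $2$-simplex of~$D_\Gamma$, and the corresponding cell stabiliser is cyclic (for edges between type-one and type-two vertices) or trivial (in every other case); this contradicts $K_0$ not being virtually cyclic. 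Hence $v = h \cdot v$, so $h \in P$, and $H \le P$, as required.
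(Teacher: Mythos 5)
Your overall architecture is sound and ends up close to the paper's: the decisive points in both arguments are (i) that the group cannot live in a centraliser $\Span a\times F$ because the length-$m$ braid relation with $m\ge3$ projects to a non-trivial relation in a rank-two free group, and (ii) that the only remaining possibility is fixing a type-two vertex. Your tree-elliptic exclusion is exactly the paper's ``no generalised braid relations occur in $\Z\times F$'', and your final upgrade from $K$ to $H$ via $K\cap hKh^{-1}$ and convexity of fixed-point sets is correct (the paper sidesteps it by showing directly that all of $H$ is elliptic, but your route works).

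The genuine problem is the loxodromic exclusion. You rule out $z$ being loxodromic on $D_\Gamma$ by invoking ``acylindricity of the $A_\Gamma$-action due to Martin--Przytycki'', but the action of $A_\Gamma$ on $D_\Gamma$ is \emph{not} acylindrical: two points far apart on a standard tree $g\cdot T_a$ have common stabiliser containing the infinite group $g\Span a g^{-1}$, which is precisely the failure that forces Martin--Przytycki to pass to the cone-off complex $\hat D_\Gamma$. So the statement ``loxodromic centralisers are virtually cyclic'' is not available for $D_\Gamma$; indeed an element of $\Span a\times F$ with non-trivial free part is loxodromic on $D_\Gamma$ yet its centraliser contains a $\Z^2$. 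If you instead run the argument on $\hat D_\Gamma$, where acylindricity does hold, ellipticity of $z$ there no longer gives your trichotomy on $D_\Gamma$: a new case appears, namely $z$ loxodromic on $D_\Gamma$ with axis contained in a standard tree (elliptic only at a cone point). This case is repairable --- the axis lies in a unique standard tree, so $\centr{A_\Gamma}{z}$, hence $K$, stabilises that tree and your braid-relation argument applies verbatim --- but as written the step rests on a false premise and omits a case. Alternatively, one can argue intrinsically in the \cat{-1} metric: if $z$ were loxodromic on $D_\Gamma$ then $\centr{A_\Gamma}{z}$ preserves the single axis $\Min(z)$ and is an extension of a subgroup of $\operatorname{Isom}(\R)$ by the pointwise stabiliser of a line (trivial or cyclic in a two-dimensional Deligne complex), hence virtually polycyclic and unable to contain the free subgroups of $K$. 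Either repair should be made explicit.
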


\begin{proof}
    It is not restrictive to assume that $\Gamma$ does not consist of a single edge, for otherwise the statement is trivially satisfied. Following the construction of Martin-Przytycki, let $\hat D_\Gamma$ denote the cone-off Deligne complex of $A_\Gamma$. Since $A_\Gamma$ is of hyperbolic type, $\hat D_\Gamma$ admits a \cat{-1} metric and an acylindrical action of $A_\Gamma$~\cite{martin2022acylindrical}*{Theorem A}. Because $H$ virtually splits as a direct product $\Z\times F$ of infinite groups, $H$ is not acylindrically hyperbolic~\cite{Osin:acyl}*{Corollary 7.3}. It follows that $H$ has bounded orbits and hence acts elliptically on $\hat D_\Gamma$~\citelist{\cite{Osin:acyl}*{Theorem 1.1}\cite{bridson2013metric}*{Theorem II.2.8}}. We now shall distinguish two cases, based on the vertex of $\hat D_\Gamma$ that $H$ fixes.\par
    If $H$ fixes a vertex of $D_\Gamma$, then such vertex is necessarily of type~$2$ and thus $H$ is contained in a dihedral parabolic subgroup of $A_\Gamma$.\par
    If $H$ fixes a cone point, then $H$ globally stabilises a standard tree of $D_\Gamma$, that is, there exists $a\in\ver\Gamma$ such that $H$ is conjugated inside $\centr{A_\Gamma}{a}\cong\Z\times F$. This implies that $H$ is virtually~$\Z^2$, for no generalised braid relations occur in $\Z\times F$, against the hypothesis of $m\ge3$.
\end{proof}

As a direct consequence of Lemma~\ref{lem:dih_par_are_max} and Lemma~\ref{lem:dihedral of hyp-type are parab}, we obtain the following inclusion of the sets of labels.

\begin{lem}
    \label{lem:labels_subset}
        Let $A_\Gamma$ and $A_\Lambda$ be isomorphic two-dimensional Artin groups. If $A_\Lambda$ is of hyperbolic type, then
        \[
        \{m_e:\text{$e\in\edge\Gamma$ and $m_e\ge3$}\} \subseteq \{m_e:\text{$e\in\edge\Lambda$ and $m_e\ge3$}\}.
        \]
\end{lem}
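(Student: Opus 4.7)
The plan is to chase a dihedral parabolic back and forth through an isomorphism $\phi\colon A_\Gamma\to A_\Lambda$: Lemma~\ref{lem:dihedral of hyp-type are parab} is the tool on the $A_\Lambda$ side, where the hyperbolic-type hypothesis is available, while Lemma~\ref{lem:dih_par_are_max} is the tool on the $A_\Gamma$ side to force equality of labels.

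First, I fix an edge $e=\{a,b\}$ of $\Gamma$ with $m_e\ge 3$ and set $H:=\phi(\Span{a,b})\le A_\Lambda$. Then $H\cong\da{m_e}$ is non-abelian, and by Lemma~\ref{lem:dihedral of hyp-type are parab} it is contained in some dihedral parabolic subgroup $P=\conj{\Span{c,d}}{g}$ of $A_\Lambda$, with $\{c,d\}\in\edge\Lambda$ of label $m_{cd}$. If $m_{cd}=2$, then $P\cong\Z^2$ would be abelian, contradicting the containment $H\le P$; hence $m_{cd}\ge 3$.

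Next, I pull back through $\phi^{-1}$: this yields the containment $\Span{a,b}\le\phi^{-1}(P)$ inside $A_\Gamma$, in which $\Span{a,b}$ is a standard dihedral parabolic of label $m_e\ge 3$ and $\phi^{-1}(P)\cong\da{m_{cd}}$ with $m_{cd}\ge 3$. Since $A_\Gamma$ is two-dimensional, Lemma~\ref{lem:dih_par_are_max} applies to this chain and forces $\Span{a,b}=\phi^{-1}(P)$, whence $H=P$ and $m_e=m_{cd}$. Therefore $m_e$ lies in the right-hand set, as required.

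The main subtlety (and essentially the only one) is that Lemma~\ref{lem:dih_par_are_max} cannot be applied directly to the containment $H\le P$ inside $A_\Lambda$, because $H$ is not a priori a conjugate of a standard parabolic of $A_\Lambda$; only its pre-image under $\phi$ is. The maneuver of transporting the containment back through $\phi^{-1}$ is what converts the situation into exactly the \emph{standard-parabolic-inside-a-dihedral} setup required by Lemma~\ref{lem:dih_par_are_max}, and this is where the two-dimensionality of $A_\Gamma$ is used.
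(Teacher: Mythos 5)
Your proposal is correct and follows essentially the same route as the paper: apply Lemma~\ref{lem:dihedral of hyp-type are parab} in $A_\Lambda$ to trap $\phi(\Span{a,b})$ in a dihedral parabolic, pull the containment back through $\phi^{-1}$, and use the maximality statement Lemma~\ref{lem:dih_par_are_max} in the two-dimensional group $A_\Gamma$ to force equality of the subgroups and hence of the labels. The remark that the target parabolic must have label at least $3$ (since $\phi(\Span{a,b})$ is non-abelian) is a small explicit step the paper leaves implicit, but the argument is the same.
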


\begin{proof}
    Let $\phi\colon A_\Gamma\to A_\Lambda$ be an isomorphism and let $\{a,b\}\in\edge\Gamma$ be an edge with label $m_{ab}\ge3$. By Lemma~\ref{lem:dihedral of hyp-type are parab} there exist $g\in A_\Gamma$ and $\{x,y\}\in\edge\Lambda$ with $m_{xy}\ge3$ such that $\phi(\Span{a,b})\subseteq\conj{\Span{x,y}}{\phi(g)}$. It follows that $g^{-1}\Span{a,b}g\subseteq\phi^{-1}(\Span{x,y})$. By Lemma~\ref{lem:dih_par_are_max} $g^{-1}\Span{a,b}g=\phi^{-1}(\Span{x,y})$ and hence $m_{ab}=m_{xy}$.
\end{proof}

\section{RAAG subgroups of dihedral Artin groups}\label{sec:raagSubGrpOfDih} We characterise the RAAGs that can appear as subgroups of dihedral Artin groups. Understanding such possible subgroups of dihedral Artin groups is key to understanding the possible RAAG subgroups of two-dimensional Artin groups in general, as a local-to-global strategy.

\begin{defi}[paths, cycles, pods]
    Let $n\in\N_{\ge1}$:
    \begin{enumerate}
        \item the \emph{simplicial path} on $n$ vertices is the graph $P_n$ with set of vertices
        \[
        \ver{P_n}=\{u_i:i\in\{1,\dots,n\}\}
        \]
        and set of edges
        \[
        \edge{P_n}=\left\{\{u_i,u_{i+1}\}:i\in\{1,\dots,n-1\}\right\};
        \]
        \item for $n\ge3$, the \emph{simplicial cycle} on $n$ vertices is the graph $C_n$ with set of vertices
        \[
        \ver{C_n}=\{u_i:i\in\Z_n\}
        \]
        and set of edges
        \[
        \edge{C_n}=\left\{\{u_i,u_{i+1}\}:i\in\Z_n\right\};
        \]
        \item the \emph{simplicial $n$-pod} is the graph $S_n$ with set of vertices
        \[
        \ver{S_n}=\{u_i:i\in\{0,\dots,n\}\}
        \]
        and set of edges 
        \[
        \edge{S_n}=\{\{u_0,u_i\}:i\in\{1,\dots,n\}\}.
        \]
    \end{enumerate}
\end{defi}

For $m\in\N_{\ge2}$, let $\da m$ denote the dihedral Artin group of label~$m$. The goal of this section is to prove the following characterisation of those RAAGs on some paths $P_n$ or on some cycles $C_n$ that can appear as subgroups of $\da m$:

\begin{lem}
    \label{lem:raagSubGrpOfDihedral}
    Let $m\in\N_{\ge2}$;
    \begin{enumerate}
        \item the dihedral Artin group $\da m$ does not contain any cycle RAAGs as subgroups;
        \item if $m\ge3$ and $\da m$ contains a subgroups isomorphic to $R_{P_n}$, then $n\le3$;
        \item if $\da 2$ contains a subgroup isomorphic to $R_{P_n}$, then $n\le2$.
    \end{enumerate}
\end{lem}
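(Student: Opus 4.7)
The plan is to exploit two structural facts about $\da m$: it is abelian when $m=2$, and for $m\ge 3$ it fits in a central extension by $\Z$ whose quotient is a free product of two cyclic groups. The easy cases handle $m=2$ and the triangle $R_{C_3}$. Since $\da 2 \cong \Z^2$ is abelian, while both $R_{P_n}$ for $n\ge 3$ and $R_{C_n}$ for $n\ge 4$ contain the non-abelian free subgroup generated by any two non-adjacent vertices (by Baudisch's theorem), none of these embeds in $\da 2$; and $R_{C_3} \cong \Z^3$ fails to embed in $\da 2$ by rank and in $\da m$ for $m\ge 3$ by cohomological dimension, via Proposition~\ref{prop:2d_iso_inv}.

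For the remaining cases, $m\ge 3$ and $\Lambda\in\{C_n:n\ge 4\}\cup\{P_n:n\ge 4\}$, I would single out in $\Lambda$ a vertex~$v$ having two neighbours $w_1,w_2$ that are not adjacent to each other, together with a further vertex $w_3$ not adjacent to~$v$. Concretely, $v=u_1$, $w_1=u_n$, $w_2=u_2$, $w_3=u_3$ works for $C_n$ with $n\ge 4$, and $v=u_2$, $w_1=u_1$, $w_2=u_3$, $w_3=u_4$ works for $P_n$ with $n\ge 4$. In $R_\Lambda$, the elements $w_1,w_2$ centralise~$v$ and together generate a subgroup isomorphic to~$F_2$ (again by Baudisch), whereas $v$ and $w_3$ do not commute. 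Suppose for contradiction that an embedding $\phi\colon R_\Lambda \hookrightarrow \da m$ exists. The image $\phi(v)$ cannot lie in $Z(\da m)$, for otherwise $\phi(v)$ would commute with $\phi(w_3)$, contradicting $[v,w_3]\ne 1$ in $R_\Lambda$.

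The technical crux, and my expected main obstacle, is to rule out the alternative $\phi(v)\notin Z(\da m)$ by showing that $\centr{\da m}{\phi(v)}$ is abelian, which contradicts $F_2\cong\langle\phi(w_1),\phi(w_2)\rangle\le\centr{\da m}{\phi(v)}$. For this I would invoke the standard central extension
\[
1 \longrightarrow Z(\da m) \longrightarrow \da m \longrightarrow Q_m \longrightarrow 1,
\]
where $Z(\da m)\cong\Z$ and, by a direct computation from the defining presentation, $Q_m \cong \Z/2\free\Z/m$ when $m$ is odd and $Q_m \cong \Z\free\Z/(m/2)$ when $m$ is even. In such a free product of cyclic groups, centralisers of non-trivial elements are cyclic. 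Hence $\overline{\phi(v)}\ne 1$ has cyclic centraliser in $Q_m$, and $\centr{\da m}{\phi(v)}$ projects onto a subgroup of it with kernel $Z(\da m)$; so $\centr{\da m}{\phi(v)}$ is a central extension of a cyclic group by~$\Z$, which is automatically abelian.
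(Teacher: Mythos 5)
Your proof is correct, but it takes a genuinely different route from the paper. The paper passes to the finite-index subgroup $\Z\times F_k\le\da m$ (for $m\ge3$), identifies it with the pod RAAG $R_{S_k}$, and then classifies all RAAG subgroups of $R_{S_k}$ via the Kim--Koberda extension graph, transferring the conclusion back to $\da m$ itself by the Crisp--Paris result that the $d$-th powers of the generators of a RAAG again generate that RAAG. You instead argue directly in $\da m$: after disposing of $\Z^3$ by two-dimensionality and of everything in the $m=2$ case by commutativity, you observe that any of the remaining graphs contains a vertex $v$ whose centraliser in the RAAG contains an $F_2$ while $v$ is not central, and you show that centralisers of non-central elements of $\da m$ are abelian by lifting cyclic centralisers through the central extension $1\to\operatorname{Z}(\da m)\to\da m\to Q_m\to1$ with $Q_m$ a free product of cyclic groups. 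Your argument is more elementary and self-contained (it needs only the classical structure of dihedral Artin groups as central extensions and the standard description of centralisers in free products, rather than the extension-graph machinery), whereas the paper's method yields strictly more, namely a complete characterisation of \emph{which} path and cycle RAAGs embed (the ``if and only if'' of Lemma~\ref{lem:raag subgroups of star raag}), which is why it is set up that way. Two small points to keep in view: the claim that centralisers of nontrivial elements of $Q_m$ are cyclic uses that both free factors are cyclic \emph{and} the standard fact that the centraliser of an element not conjugate into a factor is infinite cyclic; and the kernel computation $\centr{\da m}{\phi(v)}\cap\operatorname{Z}(\da m)=\operatorname{Z}(\da m)$ is what makes the extension of a cyclic group by a central $\Z$ abelian. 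Both are standard and your write-up handles them adequately.
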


Let us remark that $\da 2\cong\Z^2$ and, for $m\ge3$, $\da m$ contains a finite-index subgroup of the form $\Z\times F_k$ for some $k\in\N_{\ge2}$~\cite{crisp2005automorphisms}*{Section~2}. In particular, $\da m$ is virtually isomorphic to a RAAG $R_{S_k}$. In order to prove Lemma~\ref{lem:raagSubGrpOfDihedral}, we will first understand which RAAGs can embed in $R_{S_k}$ and then show that every RAAG subgroup of a dihedral Artin group embeds in the $R_{S_k}$ subgroup.

\begin{thm}[\cite{kim2013embedability}*{Theorem~1.11}]
    \label{thm:raag subgroups of raags}
    Let $\Gamma,\Lambda$ be finite simplicial graphs and let $\Gamma$ be triangle free; the following are equivalent:
    \begin{enumerate}
    \item there is a group monomorphism $R_\Lambda\to R_\Gamma$;
    \item $\Lambda$ is an induced subgraph of $\ext \Gamma$.
\end{enumerate}
\end{thm}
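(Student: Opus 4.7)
The plan is to prove the two implications of the Kim--Koberda extension graph theorem separately. Recall that the extension graph $\ext\Gamma$ has as vertex set the $R_\Gamma$-conjugates of the standard generators, with two vertices joined by an edge iff the corresponding elements commute in $R_\Gamma$. The canonical copy of $\Gamma$ sits as an induced subgraph via $v \mapsto v$.

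For the easy direction $(2) \Rightarrow (1)$: given an induced subgraph $\Lambda \hookrightarrow \ext\Gamma$, each $u \in V(\Lambda)$ corresponds to a conjugate $\alpha_u = g_u v_u g_u^{-1}$ for some $v_u \in V(\Gamma)$ and $g_u \in R_\Gamma$. Since edges of $\Lambda$ encode commutation of the $\alpha_u$'s, the assignment $u \mapsto \alpha_u$ extends to a homomorphism $\phi \colon R_\Lambda \to R_\Gamma$. To check injectivity I would use the following separation fact: for any two vertices $\alpha, \beta$ of $\ext\Gamma$, either $\alpha\beta = \beta\alpha$ or $\langle \alpha, \beta\rangle$ is free of rank two. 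This is proved by a ping-pong argument on the universal cover of the Salvetti complex of $R_\Gamma$. Combined with the piecewise-Euclidean structure of the Salvetti complex and the standard normal form on $R_\Lambda$, an inductive argument then shows that no non-trivial reduced word in the $\alpha_u$'s can equal the identity in $R_\Gamma$.

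For the hard direction $(1) \Rightarrow (2)$: I would argue by induction on $\abs{V(\Lambda)}$. Given a monomorphism $\phi \colon R_\Lambda \to R_\Gamma$, the goal is to find vertices $\alpha_u \in V(\ext\Gamma)$ indexed by $u \in V(\Lambda)$ whose commutation pattern exactly reproduces the edges of $\Lambda$. The main tool is centralizer analysis in the $2$-dimensional RAAG $R_\Gamma$ (two-dimensional precisely because $\Gamma$ is triangle free): the centralizer of a non-trivial element of $R_\Gamma$ is either infinite cyclic generated by some ``root'' $z$, or of the form $\langle z \rangle \times F$ with $F$ free. I would exploit this classification to replace each $\phi(u)$ by a suitable conjugate of a single standard generator that sits inside its centralizer, and then verify that commutation with $\phi(u')$ descends to commutation of the replacements.

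The main obstacle is the replacement step: the elements $\phi(u)$ need not themselves lie in $V(\ext\Gamma)$, so one has to extract conjugates of standard generators from their centralizer structure without breaking the commutation pattern coming from $\Lambda$. This is handled using the ``support'' (or label set) of $\phi(u)$ in the RAAG normal form, together with the triangle-free hypothesis on $\Gamma$, which forces the supports of commuting elements to line up inside a common edge-labelled direction. Without triangle-freeness, the extension graph has more complicated geometry (it is no longer a quasi-tree) and the replacement step genuinely fails; this is where the hypothesis is essential.
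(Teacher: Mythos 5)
This statement is quoted in the paper from Kim--Koberda (\cite{kim2013embedability}*{Theorem~1.11}); the paper itself contains no proof, so your sketch has to be measured against the actual argument there, and in both directions your outline stops exactly where the real work begins. For $(2)\Rightarrow(1)$, the dichotomy ``two vertices of $\ext\Gamma$ either commute or generate $F_2$'' only gives pairwise information, and pairwise information never certifies injectivity of the map $R_\Lambda\to R_\Gamma$: in $F_2=\Span{a,b}$ the elements $a$, $b$, $ab$ pairwise generate rank-two free groups, yet satisfy a collective relation, so ``an inductive argument on reduced words'' is precisely the content you have not supplied, not a routine verification. Kim--Koberda prove this direction (their Theorem~1.3, valid for arbitrary $\Gamma$, with no triangle-free hypothesis) by a different mechanism: every finite induced subgraph of $\ext\Gamma$ sits inside a graph obtained from $\Gamma$ by finitely many doublings along stars, and each doubling is realised by an explicit finite-index/retraction construction inside $R_\Gamma$; alternatively one embeds $R_\Lambda$ by sending each vertex to a sufficiently high power of the corresponding conjugate, which is again a theorem, not an observation. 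Note also that what must be produced is \emph{some} embedding of $R_\Lambda$, and it is not claimed (nor obvious) that the tautological assignment $u\mapsto\alpha_u$ with exponent one works.

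For $(1)\Rightarrow(2)$ the proposed replacement step fails as stated. In a triangle-free RAAG the centraliser of an element that is, up to conjugacy and extraction of roots, cyclically reduced with support not contained in a star is infinite cyclic, generated by a root of the element itself; such a centraliser contains no conjugate of any standard generator, so there is nothing to ``replace $\phi(u)$ by'' inside it. This is exactly the hard case of Kim--Koberda's theorem: the images $\phi(u)$ need not be conjugate powers of generators, and one must nevertheless manufacture vertices of $\ext\Gamma$ whose commutation \emph{and} non-commutation pattern reproduces $\Lambda$ (the subgraph must be induced, so preserving non-adjacency is as essential as preserving adjacency, and your sketch addresses only the latter). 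The appeal to ``supports lining up inside a common edge-labelled direction'' is not an argument and does not resolve the cyclic-centraliser case; the published proof requires a substantially more delicate analysis of centralisers, roots and the structure of $\ext\Gamma$ for triangle-free $\Gamma$, together with a genuine induction that your outline does not set up. As it stands, both implications have gaps at their central steps.
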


Here $\ext\Gamma$ denotes the \emph{extension graph} of $\Gamma$: its vertex set is $\{\conj vg:v\in\ver\Gamma, g\in R_\Gamma\}$ and two vertices are adjacent when the two corresponding group elements commute in $R_\Gamma$. 

\begin{lem}
    \label{lem:raag subgroups of star raag}
    Let $k\in\N_{\ge2}$:
    \begin{enumerate}
        \item the group $R_{S_k}$ does not contain any subgroup isomorphic to $R_{C_n}$;
        \item the group $R_{S_k}$ contains a subgroup isomorphic to $R_{P_n}$ if and only if $n\le3$.
    \end{enumerate}
\end{lem}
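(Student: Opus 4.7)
\medskip

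\textbf{Proof plan.} The strategy is to apply Theorem~\ref{thm:raag subgroups of raags} after computing the extension graph $\ext{S_k}$. Since $S_k$ is a tree, it is triangle free, so the theorem applies and $R_\Lambda$ embeds in $R_{S_k}$ iff $\Lambda$ is an induced subgraph of $\ext{S_k}$.

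\medskip

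First I would describe $\ext{S_k}$ explicitly, using the isomorphism $R_{S_k}\cong\Z\times F_k$, where the $\Z$ factor is generated by the central vertex $u_0$ (adjacent to every other vertex of $S_k$) and $F_k=\Span{u_1,\dots,u_k}$ is free on the leaves. As $u_0$ is central in $R_{S_k}$, its only conjugate is itself; for $i\ge1$, the conjugates $\{g u_i g^{-1}:g\in R_{S_k}\}$ form an infinite set of vertices of $\ext{S_k}$. The vertex $u_0$ is adjacent to every other vertex of $\ext{S_k}$ because it commutes with every standard generator and hence with every conjugate. The key computation is that there are no further edges: if $i,j\in\{1,\dots,k\}$ and $gu_ig^{-1}$ commutes with $hu_jh^{-1}$, then passing to the quotient $R_{S_k}/\Span{u_0}\cong F_k$ yields two commuting elements of the free group, which must therefore lie in a common cyclic subgroup. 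Since each of $\bar g u_i\bar g^{-1}$ and $\bar h u_j\bar h^{-1}$ is a conjugate of a generator and hence not a proper power, they must in fact be powers of one another; a short argument with cyclic reduction in $F_k$ forces them to be equal and $i=j$. Thus two distinct conjugates of leaves never commute, and $\ext{S_k}$ is precisely the infinite star with centre $u_0$.

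\medskip

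With the extension graph identified, both statements follow at once. For (1), any induced subgraph of a star is either a single vertex, a single edge, or a (possibly infinite) star with the same centre; in particular, no subgraph of $\ext{S_k}$ is a cycle $C_n$ with $n\ge3$, since in $C_n$ every vertex has degree~$2$ and no vertex is adjacent to every other. Applying Theorem~\ref{thm:raag subgroups of raags} rules out embeddings $R_{C_n}\hookrightarrow R_{S_k}$. For (2), the longest induced path in a star consists of a leaf, the centre and another leaf, so $P_n$ embeds as an induced subgraph of $\ext{S_k}$ exactly when $n\le 3$; Theorem~\ref{thm:raag subgroups of raags} then gives both directions of the equivalence (for the ``if'' direction, the explicit induced path realises the required embedding).

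\medskip

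The main obstacle is the edgeless claim for pairs of conjugates of leaves: one must verify that two commuting conjugates of generators in a free group force the same generator. Everything else is a routine translation between the free-product structure of $R_{S_k}$ and the combinatorics of its extension graph.
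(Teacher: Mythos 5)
Your proof is correct and follows essentially the same route as the paper: both invoke the Kim--Koberda extension-graph criterion (Theorem~\ref{thm:raag subgroups of raags}) and identify $\ext{S_k}$ as an infinite star, from which both claims are immediate. The only difference is that the paper computes $\ext{S_k}$ via the join decomposition $S_k=\{u_0\}*\{u_1,\dots,u_k\}$ and the compatibility of the extension graph with joins, whereas you verify directly that distinct conjugates of leaves never commute by projecting to the free factor $F_k\cong R_{S_k}/\Span{u_0}$; both computations are valid.
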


\begin{proof}
    Since $S_k$ is triangle free, by Theorem~\ref{thm:raag subgroups of raags}, the right-angled Artin subgroups of $R_{S_k}$ correspond to the induced subgraphs of $\ext{S_k}$. The pod $S_k$ decomposes as a join $\{u_0\}*\{u_1,\dots,u_k\}$ and thus $\ext{S_k}=\ext{\{u_0\}}*\ext{\{u_1,\dots,u_k\}}$~\cite{kim2013embedability}*{Lemma~3.5}. By the definition of extension graph, we have that $\ext{\{u_0\}}=\{u_0\}$, whereas $\ext{\{u_1,\dots,u_k\}}$ is a discrete graph on countably-many vertices. Thus $\ext{S_k}$ is a join of the form $\{v\}*\{v_i:i\in\Z\}$. As induced subgraphs, it does not contain any cycle and it only contains paths on at most three vertices.
\end{proof}

\begin{lem}
    \label{lem:subgroups of virtual raag}
    Let $\Gamma$ be finite simplicial labelled graph, let $\Lambda$ be a finite simplicial graph and let $G$ be a group that is virtually isomorphic to $A_\Gamma$. The following are equivalent:
    \begin{enumerate}
        \item $A_\Gamma$ contains a subgroup isomorphic to $R_\Lambda$;
        \item $G$ contains a subgroup isomorphic to $R_\Lambda$.
    \end{enumerate}
\end{lem}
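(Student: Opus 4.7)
The plan is to reduce both implications to the following classical fact about right-angled Artin groups: for every $N\ge1$, the assignment $v\mapsto v^N$ on the standard generators of $R_\Lambda$ extends to an injective endomorphism of $R_\Lambda$, so that the subgroup $\Span{v^N:v\in\ver\Lambda}$ of $R_\Lambda$ is again isomorphic to $R_\Lambda$. Well-definedness is immediate since each commutation relation $[v,w]=1$ implies $[v^N,w^N]=1$; injectivity can be obtained from the normal-form theorem for RAAGs, or from the action on the CAT(0) Salvetti complex, since a reduced word in such powers of generators still acts loxodromically or trivially according to its syllable structure. In particular, this shows that every finite-index subgroup of $R_\Lambda$ contains an isomorphic copy of $R_\Lambda$.

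I would then unpack the hypothesis of virtual isomorphism: by definition, there exist finite-index subgroups $H\le A_\Gamma$ and $H'\le G$ together with an isomorphism $\psi\colon H\to H'$. Since $A_\Gamma$ and $G$ are finitely generated, I may shrink $H$ and $H'$ so that they are normal of finite index in $A_\Gamma$ and $G$ respectively (taking the intersections of conjugates, and transferring one of the two subgroups across $\psi$ to ensure compatibility). Let $N$ be a common multiple of the two resulting indices.

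For the forward implication, assume $R_\Lambda\le A_\Gamma$. Normality of $H$ and the definition of $N$ force $v^N\in H$ for every $v\in\ver\Lambda$, so $\Span{v^N:v\in\ver\Lambda}\le H$, and by the key fact this subgroup is isomorphic to $R_\Lambda$. Its image under $\psi$ is then an isomorphic copy of $R_\Lambda$ inside $H'\le G$. The converse implication is obtained by the identical argument with $A_\Gamma$ and $G$ swapped and $\psi$ replaced by $\psi^{-1}$.

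The only genuine content is the $N$-th power endomorphism fact for RAAGs; this is the step I expect to require the most care, although it is folklore and can either be cited or verified in a few lines via normal forms. Everything else is a routine finite-index book-keeping that does not interact with the specific structure of $A_\Gamma$.
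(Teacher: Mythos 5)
Your proof is correct and follows essentially the same route as the paper: the whole content is that the $N$-th powers of the standard generators of $R_\Lambda$ generate an isomorphic copy of $R_\Lambda$, which can be forced into any finite-index subgroup and then transported by the (virtual) isomorphism. The only differences are cosmetic: the paper simply cites Crisp--Paris (the solution to the Tits conjecture) for the key fact, whereas you prove the right-angled case directly via normal forms (which is valid, since reducedness of a word in a RAAG depends only on the sequence of generators and not on the nonzero exponents); and you treat ``virtually isomorphic'' symmetrically, while the paper uses the asymmetric meaning (a finite-index subgroup of $G$ isomorphic to $A_\Gamma$), which makes one implication trivial. Your care in passing to a normal finite-index subgroup before taking powers is actually slightly more precise than the paper's phrasing.
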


\begin{proof}
    Let $H$ be a finite-index subgroup of $G$ isomorphic to $A_\Gamma$ and let $d=\abs{G:H}$. The subgroup of $G$ generated by $\{a^d:a\in\ver\Lambda\}$ is contained in $H$ and it is isomorphic to $R_\Lambda$~\cite{crisp2001solution}*{Theorem~1}. The other implication is true in general.
\end{proof}

When $G$ is a dihedral Artin group, we obtain Lemma~\ref{lem:raagSubGrpOfDihedral}.

\begin{proof}[Proof of Lemma~\ref{lem:raagSubGrpOfDihedral}]
    For $m\ge3$, $\da m$ is virtually $\Z\times F_k$ for some $k\in\N_{\ge2}$ and the result follows as a combination of Lemma~\ref{lem:raag subgroups of star raag} and Lemma~\ref{lem:subgroups of virtual raag}. For $m=2$, it is sufficient to observe that $\da 2$ coincides with $R_{P_2}$ and $\ext {P_2}=P_2$.
\end{proof}

\begin{cor}
    \label{cor:cycle subgrp dihedral}
     Let $n\in\N_{\ge3}$ and let $C_n$ be a labelled cycle. A dihedral Artin group does not contain any subgroup isomorphic to $A_{C_n}$.
\end{cor}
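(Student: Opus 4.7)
The strategy is to combine Lemma~\ref{lem:raagSubGrpOfDihedral}(1) with the fact that an Artin group always contains, as a subgroup, the right-angled Artin group on its underlying (unlabelled) graph.

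More precisely, the first step is to recall the theorem of Crisp--Paris~\cite{crisp2001solution}*{Theorem~1}: for any labelled graph $\Gamma$, the subgroup of $A_\Gamma$ generated by $\{a^N : a \in \ver\Gamma\}$, for $N$ large enough, is isomorphic to the right-angled Artin group $R_{\Gamma}$, where here the labels of $\Gamma$ are forgotten. Applied to a labelled cycle $C_n$, this yields an embedding $R_{C_n} \hookrightarrow A_{C_n}$.

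Suppose by contradiction that there exist $m \in \N_{\ge 2}$ and a group monomorphism $\phi\colon A_{C_n} \to \da m$. Composing $\phi$ with the embedding $R_{C_n} \hookrightarrow A_{C_n}$ provided by Crisp--Paris gives an embedding $R_{C_n} \hookrightarrow \da m$. This directly contradicts Lemma~\ref{lem:raagSubGrpOfDihedral}(1), which asserts that no dihedral Artin group contains a cycle right-angled Artin group as a subgroup. Hence no such $\phi$ exists, which is the content of the corollary.

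There is no real obstacle in this argument: the whole force is already packed into Lemma~\ref{lem:raagSubGrpOfDihedral}, whose proof uses the classification of RAAG subgroups of a star RAAG via the extension graph machinery of Kim--Koberda. The only point worth double-checking is that the Crisp--Paris statement we invoke works for arbitrary labelled $\Gamma$ (and in particular for $\Gamma = C_n$), which is exactly the form in which it is quoted in Lemma~\ref{lem:subgroups of virtual raag} above.
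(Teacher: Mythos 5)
There is a genuine gap: you have misquoted the Crisp--Paris theorem. The solution to the Tits conjecture~\cite{crisp2001solution} says that, for $N$ large enough, the subgroup of $A_\Gamma$ generated by $\{a^N : a\in\ver\Gamma\}$ is the right-angled Artin group on the graph with vertex set $\ver\Gamma$ and \emph{only those edges of $\Gamma$ whose label is~$2$}; whenever $m_{ab}\ge3$, the elements $a^N$ and $b^N$ generate a free group of rank two, not $\Z^2$. So for a labelled cycle $C_n$ with at least one label $\ge3$, your construction produces a RAAG on a disjoint union of paths, not on a cycle, and such RAAGs do embed in dihedral Artin groups (e.g.\ free groups do), so no contradiction with Lemma~\ref{lem:raagSubGrpOfDihedral}(1) arises. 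A concrete counterexample to the claim you rely on ("an Artin group contains the RAAG on its underlying unlabelled graph") is the $(3,3,3)$-triangle: $R_{C_3}=\Z^3$ does not embed in $A_{(3,3,3)}$ by Proposition~\ref{prop:2d_iso_inv}. Your appeal to Lemma~\ref{lem:subgroups of virtual raag} does not rescue this, since there Crisp--Paris is applied to generators of a RAAG already sitting inside $G$ (all labels~$2$), which is exactly the case where the theorem returns the same RAAG.

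The paper circumvents this by invoking Jankiewicz--Schreve~\cite{jankiewicz2022right}*{Theorem~1.1} instead: inside $A_{C_n}$ one takes fourth powers of the standard generators \emph{together with} fourth powers of the central elements $z_{i,i+1}$ of the dihedral parabolics with $m_{i,i+1}\ge3$, and this generates a RAAG on the cycle obtained from $C_n$ by subdividing each edge of label at least~$3$ --- still a cycle RAAG, so Lemma~\ref{lem:raagSubGrpOfDihedral}(1) applies. Your overall strategy (reduce to the cycle-RAAG statement) is the right one, but the step producing the cycle RAAG inside $A_{C_n}$ needs this stronger input.
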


\begin{proof}
    Let us assume by contradiction that there exist $m\in\N_{\ge2}$ and a labelled cycle $C_n$ such that $\da m$ contains a subgroup isomorphic to $A_{C_n}$. For $i\in\Z_n$, if $m_{i,i+1}\ge3$, then we denote by $z_{i,i+1}$ the generator of the centre of $\Span{u_i,u_{i+1}}$. The subgroup of $A_{C_n}$ generated by
    \[
    \{u_i^4:i\in\Z_n\}\cup\{z_{i,i+1}^4:\text{$i\in\Z_n$ and $m_{i,i+1}\ge3$}\}
    \]
    is isomorphic to a cycle RAAG~\cite{jankiewicz2022right}*{Theorem~1.1}, against Lemma~\ref{lem:raagSubGrpOfDihedral}.
\end{proof}

\section{Embedding cycle Artin groups}
\label{sec:embeddCycArtGrp}

We study cycle Artin subgroups of two-dimensional Artin groups of hyperbolic type from a geometric point of view. We shall see that each cycle Artin subgroup corresponds to a cycle of standard trees in the Deligne complex.

\begin{defi}[cycle Artin groups]
    Let $C_n$ be a labelled cycle; we call the group $A_{C_n}$ a \emph{cycle Artin group}.     If $A_{C_n}$ is right angled, then we denote it by $R_{C_n}$ and call it a \emph{cycle RAAG}.
\end{defi}

For the sake of light notation, we never specify the edge-labelling of $C_n$ but we should always think of it as a labelled graph.\par
Adapted to the language of embeddings of cycle Artin groups, we will only use Proposition~\ref{prop:2d_iso_inv} in the form of its immediate corollary:

\begin{lem}
    \label{lem:no Z^3}
    Let $A_\Gamma$ be a two-dimensional Artin group of hyperbolic type; there does not exist any group monomorphism $R_{C_3}\to A_\Gamma$.
\end{lem}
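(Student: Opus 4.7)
The plan is to reduce this immediately to Proposition~\ref{prop:2d_iso_inv}. First I would observe that $C_3$ is the triangle graph on three vertices, and since it is considered here as a cycle RAAG, all three edges carry the label~$2$. Consequently the three standard generators of $R_{C_3}$ pairwise commute, so there is a tautological isomorphism $R_{C_3}\cong\Z^3$.

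Second, I would invoke Proposition~\ref{prop:2d_iso_inv}: a two-dimensional Artin group contains no free abelian subgroup of rank at least~$3$. In particular $A_\Gamma$ contains no copy of $\Z^3$, and therefore no monomorphism $R_{C_3}\to A_\Gamma$ can exist.

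Note that the hypothesis that $A_\Gamma$ is of hyperbolic type is not actually needed for this argument; the conclusion already follows from two-dimensionality alone. The reason to state it with the extra hypothesis is simply to match the standing assumption used throughout Section~\ref{sec:embeddCycArtGrp}. There is no genuine obstacle in the proof: it is a one-line consequence of the earlier cohomological-dimension bound via the solution of the $\operatorname{K}(\pi,1)$-conjecture for two-dimensional Artin groups that was recalled in Proposition~\ref{prop:2d_iso_inv}.
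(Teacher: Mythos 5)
Your proof is correct and is exactly the paper's argument: the lemma is stated there as an immediate corollary of Proposition~\ref{prop:2d_iso_inv}, using that $R_{C_3}\cong\Z^3$ cannot embed in a two-dimensional Artin group. Your observation that the hyperbolic-type hypothesis is superfluous is also consistent with the paper, which includes it only to match the standing setting of that section.
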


Given a cycle Artin group embedding $A_{C_n}\to A_\Gamma$, we study the subcomplex of $D_\Gamma$ spanned by the minset of the standard generators of $A_{C_n}$ in $A_\Gamma$. Our goal (Lemma~\ref{lem:mono->cycle}) is to show that such subcomplex is a cycle of standard trees, which we now define for a two-dimensional Artin group $A_\Gamma$. Recall that a standard tree is, up to translation, the fixed-point set of a standard generator of $A_\Gamma$.

\begin{defi}[cycles of standard trees]
    \label{def:cycStdTree}
    Let $n\in\N_{\ge3}$; a \emph{cycle of standard trees} in $D_\Gamma$ is a finite set of distinct standard trees $\{T_i\}_{i\in\Z_n}$ such that, for every $i\in\Z_n$:
    \begin{enumerate}
        \item  $T_i\cap T_{i+1}\ne\varnothing$;
        \item $T_{i-1}\cap T_i\cap T_{i+1}=\varnothing$. 
    \end{enumerate} 
\end{defi}

    For every $i\in\Z_n$, let us denote by $v_i$ the intersection vertex $T_i\cap T_{i+1}$ (which is necessarily unique~\cite{martin2022acylindrical}*{Lemma 4.3}). A cycle of standard trees $\{T_i\}_{i\in\Z_n}$ is \emph {simple} if the loop of $D_\Gamma$ that is obtained by concatenating the geodesic segments $[v_i,v_{i+1}]$ is an embedded loop. \par
    A \emph{sub-cycle} of $\{T_i\}_{i\in\Z_n}$ is an injective map $\alpha\colon\Z_k\to\Z_n$ such that $\{T_{\alpha(j)}\}_{j\in\Z_k}$ is a cycle of standard trees.

\begin{rem}
    \label{rem:cycStdTrsOfBMV}
    The term cycle of standard trees (alongside loop of standard trees) also appears in work of Blufstein-Martin-Vaskou~\cite{blufstein2024homomorphisms}*{Definition 6.2}, but our notion is weaker. In particular, we do not require that, for every distinct indices $i,j\in\Z_n$, if $j\ne i\pm1$, then the subgroup $\Span{g,h}_{A_\Gamma}$ is non-abelian free, where $T_i=\fix g$ and $T_j = \fix h$.
\end{rem}
    
We remark that every cycle of standard trees admits a sub-cycle that is simple.

\begin{lem}
    \label{lem:simSubCyc}
    Let $\mathscr L = \{T_i\}_{i\in\Z_n}$ be a cycle of standard trees that is not simple. There are $j\in\{1,\dots,n\}$ and $k\in\{2,\dots,n-1\}$ such that $\mathscr M= \{T_j,T_{j+1},\dots, T_{j+k}\}$ is a simple sub-cycle of $\mathscr L$.
\end{lem}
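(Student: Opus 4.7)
The plan is to argue by minimality. Among all consecutive sub-cycles of $\mathscr L$ of the form $\{T_j, T_{j+1}, \ldots, T_{j+k}\}$ with $k \ge 2$, choose one $\mathscr M$ of minimum cardinality; such a choice exists since $\mathscr L$ itself is a candidate. If $\mathscr M$ is simple, we are done. So assume for contradiction that $\mathscr M$ is not simple; the goal is to produce a strictly smaller consecutive sub-cycle of $\mathscr L$, contradicting the choice of $\mathscr M$.

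The base observation is that every $3$-cycle of standard trees is automatically simple: the triple condition $T_1 \cap T_2 \cap T_3 = \varnothing$ forces the vertices $v_1, v_2, v_3$ to be pairwise distinct, and since standard trees are convex in the Moussong \cat 0 metric on $D_\Gamma$ (being fixed-point sets of isometries) while any two distinct standard trees meet in at most one point, each geodesic segment $[v_i, v_{i+1}]$ lies in $T_{i+1}$; hence the intersection of two distinct such segments is contained in the single-point set $T_{i+1} \cap T_{j+1}$, which is easily ruled out for $n = 3$. In particular $\mathscr M$ must have cardinality at least $4$.

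Next, from any self-intersection of the loop of $\mathscr M$ I extract a chord $T_\alpha \cap T_\beta \ne \varnothing$ with $\alpha, \beta$ indices of $\mathscr M$ non-adjacent in $\mathscr M$'s cyclic order. Indeed, each geodesic segment of the loop is an embedding (because $v_i \ne v_{i+1}$ by the triple condition), and two consecutive segments share only their common endpoint (again since $T_{i+1} \cap T_{i+2}$ consists of a single point); so any self-intersection comes from two non-adjacent segments, and the shared point lies in $T_\alpha \cap T_\beta$ for the corresponding trees. Writing $\mathscr M = \{T_p, T_{p+1}, \ldots, T_q\}$, the pair $\{\alpha, \beta\}$ is distinct from $\{p, q\}$ (which are adjacent in $\mathscr M$'s cyclic order via the closure), so the forward arc from $\min(\alpha, \beta)$ to $\max(\alpha, \beta)$ in $\Z_n$ is strictly contained in $[p, q]$, giving a candidate consecutive sub-cycle of cardinality strictly less than $|\mathscr M|$.

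The last step is to verify that this candidate, possibly after further shrinking, is actually a valid sub-cycle. The only possible obstruction are the two triple conditions at the closure: $T_{\beta-1} \cap T_\beta \cap T_\alpha = \varnothing$ (equivalent to $v_{\beta-1} \notin T_\alpha$) and $T_\beta \cap T_\alpha \cap T_{\alpha+1} = \varnothing$ (equivalent to $v_\alpha \notin T_\beta$). If the first fails, then $v_{\beta-1} \in T_{\beta-1} \cap T_\alpha$ provides a strictly shorter chord and one replaces $\beta$ by $\beta - 1$; symmetrically for the second. Since a chord of cyclic distance $2$ forces both closure triples to vanish automatically by the triple condition of $\mathscr L$, this iterated shrinking halts at a valid sub-cycle of cardinality between $3$ and $|\mathscr M| - 1$, contradicting the minimality of $\mathscr M$. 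The delicate point is this interplay at the closure, which requires simultaneous use of the triple intersection condition of the original cycle and of the fact that two distinct standard trees in $D_\Gamma$ meet in at most one point.
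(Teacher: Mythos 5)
Your proof is correct. The geometric engine is the same as the paper's: convexity of standard trees forces each segment $[v_i,v_{i+1}]$ into $T_{i+1}$, and the fact that two distinct standard trees meet in at most one point forces any self-intersection of the loop to come from two non-consecutive segments, hence yields a ``chord'' $T_a\cap T_b\ne\emptyset$ between non-adjacent trees, whose enclosed consecutive arc is a strictly shorter candidate. Where you diverge is in the packaging and in one substantive point of rigour. The paper extracts a specific sub-cycle directly (first failure of embeddedness, then the maximal $j$ with $\gamma_m\cap\gamma_j\ne\emptyset$) and declares it simple ``by construction''; you instead run a minimal-counterexample argument, which is cleaner, and---more importantly---you explicitly verify that the extracted arc satisfies the definition of a cycle of standard trees, namely the two triple-intersection conditions at the closure ($T_{b-1}\cap T_b\cap T_a=\emptyset$ and $T_b\cap T_a\cap T_{a+1}=\emptyset$), handling a failure by passing to the shorter chord $\{a,b-1\}$ or $\{a+1,b\}$ and terminating at cyclic distance $2$, where both conditions follow from the triple condition of $\mathscr L$. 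The paper's proof is silent on this verification, so your version is the more complete of the two; the only cosmetic caveat is that ``$\min(\alpha,\beta)$'' should be read in the linear order of the run $p,p+1,\dots,q$ rather than in $\Z_n$, which your surrounding text makes clear enough.
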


\begin{proof}
    For every $i\in\Z_n$, let $v_i\coloneqq T_i\cap T_{i+1}$ and let us denote by $\gamma_i$ the geodesic segment $[v_i,v_{i+1}]$. Let $m$ be minimal with the property that $\gamma_1\cup\cdots\cup\gamma_{m-1}$ is an embedded loop and notice that, because $\mathscr L$ is not simple, $m<n$. Let $j\in\{1,\dots,m-1\}$ be maximal with the property that $\gamma_m\cap\gamma_j\ne\varnothing$.  Let us set $k=m-j$, modulo~$n$. By construction, the cycle $\{T_j,T_{j+1}\dots,T_{j+k}\}$ is simple.
\end{proof}

\subsection{The general case}\label{subsec:cycStdTrsFromCycArt} Throughout the subsection we work in the following setting. 

\begin{setting}
    \label{set:setting}
    Let $A_\Gamma$ be a two-dimensional Artin group of hyperbolic type, let $n\in\N_{\ge3}$ and let $\phi\colon A_{C_n}\to A_\Gamma$ be a group monomoprhism. Being $\{u_i\}_{i\in\Z_n}$ the set of vertices of $C_n$,  we set $y_i\coloneq \phi(u_i)$. For the sake of light notation, we denote by $m_{i,i+1}$ the label of the edge $\{u_i,u_{i+1}\}$.
\end{setting}

\begin{lem}
    Let us work under the hypotheses of Setting~\ref{set:setting}.  For every $i\in\Z_n$, $y_i$ acts elliptically on $D_\Gamma$.
\end{lem}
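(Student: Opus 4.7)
The strategy is to leverage the constraints on $y_i$ imposed by both neighbours $y_{i-1}$ and $y_{i+1}$ to trap $y_i$ inside a spherical-type parabolic subgroup of $A_\Gamma$, splitting according to the adjacent labels $m_{i-1,i},m_{i,i+1}$. If one adjacent label, say $m_{i-1,i}$, is at least~$3$, the argument is immediate: since $\phi$ is injective, $\Span{y_{i-1},y_i}\cong\da{m_{i-1,i}}$, and Lemma~\ref{lem:dihedral of hyp-type are parab} places this dihedral subgroup inside a conjugate $gA_{\{a,b\}}g^{-1}$ of a dihedral standard parabolic. This parabolic pointwise fixes the type-two vertex $g\cdot\Span{a,b}$ of $D_\Gamma$, hence so does $y_i$.

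If instead $m_{i-1,i}=m_{i,i+1}=2$, then $y_i$ commutes with both neighbours, so by injectivity of $\phi$ we have $\Span{y_{i-1},y_i,y_{i+1}}\cong R_{P_3}\cong F_2\times\Z$ with $y_i$ in the central $\Z$-factor. Thus $\centr{A_\Gamma}{y_i}$ is not virtually cyclic. Since $A_\Gamma\actson\hat D_\Gamma$ is acylindrical on a $\cat{-1}$ space by~\cite{martin2022acylindrical}, and centralisers of loxodromic elements for such actions are virtually cyclic (see~\cite{Osin:acyl}), $y_i$ must be elliptic on $\hat D_\Gamma$.

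The main obstacle is to upgrade ellipticity on $\hat D_\Gamma$ to ellipticity on $D_\Gamma$: the fixed point of $y_i$ in $\hat D_\Gamma$ could be a cone point, in which case $y_i$ globally stabilises some standard tree $T=g\cdot T_a$. Then $y_i$ acts on $T$ and either fixes a point there (done) or translates along an axis $L\subset T$. To rule out translation, I would observe that $y_{i-1}$ and $y_{i+1}$, commuting with $y_i$, preserve $L$ and hence $T$ (as distinct standard trees meet in at most a point), placing $\Span{y_{i-1},y_i,y_{i+1}}$ inside $\centr{A_\Gamma}{a^g}=\Span{a^g}\times F$ with $F$ a finite-rank free group. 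Since $F_2$ has trivial centre, the projection $\Span{a^g}\times F\to F$ restricts to an embedding on $\Span{y_{i-1},y_{i+1}}$, so the images of $y_{i-1},y_{i+1}$ generate a non-abelian free subgroup of $F$; the commutation of $y_i$ with both of them then forces the $F$-component of $y_i$ to centralise this non-abelian free subgroup, which in a free group forces it to be trivial. Hence $y_i\in\Span{a^g}$, contradicting the assumption that $y_i$ translates $T$ and completing the proof.
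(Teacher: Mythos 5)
Your first case (an adjacent label at least~$3$) is exactly the paper's. In the case $m_{i-1,i}=m_{i,i+1}=2$ your route genuinely differs: the paper applies the classification of virtually-$\Z^2$ subgroups (Theorem~\ref{thm:classify Z^2 subgrp}) to $\Span{y_{i-1},y_i}$ and $\Span{y_i,y_{i+1}}$ separately, matches the two resulting standard trees along the axis of $y_i$, and derives the contradiction that $y_{i-1}$ and $y_{i+1}$ would commute inside $\Span a\times F$; you instead feed the whole centraliser $F_2\times\Z$ into the acylindricity of $A_\Gamma\actson\hat D_\Gamma$ to get ellipticity on the cone-off directly, and then rule out translation along a stabilised standard tree by a free-group centraliser argument. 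Both endgames land in $\Span a\times F$ and both ultimately rest on~\cite{martin2022acylindrical}; yours bypasses Theorem~\ref{thm:classify Z^2 subgrp} at the cost of redoing by hand the upgrade from elliptic on $\hat D_\Gamma$ to elliptic on $D_\Gamma$, which is essentially the dichotomy used in the proof of Lemma~\ref{lem:dihedral of hyp-type are parab}.

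Two points to tighten. First, the isomorphism $\Span{y_{i-1},y_i,y_{i+1}}\cong R_{P_3}$ fails when $n=3$, since the three vertices then span all of $C_3$ rather than a path; dispose of that sub-case separately by noting that a triangle with two labels equal to~$2$ gives a group containing $\Z^3$ (either $\Z^3$ itself or $\da m\times\Z$ with $m\ge3$), which cannot embed into the two-dimensional $A_\Gamma$ by Proposition~\ref{prop:2d_iso_inv}. Second, the assertion that $y_{i\pm1}$ ``preserve $L$'' needs justification: in a \cat 0 space an element commuting with $y_i$ only permutes the parallel axes of $y_i$, and two distinct standard trees each containing one of two disjoint parallel axes yield no contradiction. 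You should invoke the \cat{-1} metric on $D_\Gamma$ available in the hyperbolic-type case, for which the axis of a loxodromic isometry is unique and hence preserved by its centraliser; the paper's own proof leans on the same fact when it concludes $T_a=g\cdot T_b$ from $\axis{y_2}\subseteq T_a$ and $\axis{y_2}\subseteq g\cdot T_b$.
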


\begin{proof}
    Up to a shift of the indices, let us assume by contradiction that $y_2$ acts loxodromically on $D_\Gamma$. We shall distinguish two cases.\medskip 

    If at least one of the labels ${m_{12}}$ and $m_{23}$ is not a $2$, say $m_{12}\ge3$, then Lemma~\ref{lem:dihedral of hyp-type are parab} gives that $\Span {y_1,y_2}$ is a dihedral parabolic subgroup of $A_\Gamma$, that is, the stabiliser of a vertex of $D_\Gamma$, a contradiction.\par
    Let us assume that $m_{12}=m_{23}=2$. Since $\Span{y_1,y_2}\cong \Z^2$, it is contained in the centraliser of a conjugate of a standard generator of $A_\Gamma$ (see Theorem~\ref{thm:classify Z^2 subgrp}). Up to conjugation, we may assume that there is $a\in\ver\Gamma$ such that $\Span{y_1,y_2}\le \centr{A_\Gamma}{a}$. Because $y_2$ acts loxodromically on $D_\Gamma$ and stabilises the standard tree $T_a$, we have that $\axis {y_2} \subseteq T_a$. By the same argument applied to the subgroup $\Span{y_2,y_3}$, there exist $b\in\ver\Gamma$ and $g\in A_\Gamma$ such that $\Span{y_2,y_3}\le g\centr{A_\Gamma}{b}g^{-1}$ and $\axis{y_2}\subseteq g\cdot  T_b$. Because distinct standard trees intersect in at most one vertex~\cite{martin2022acylindrical}*{Remark~4.4}, we must have that the standard trees $T_a$ and $g\cdot T_b$ coincide and thus $\Span{y_1,y_2,y_3}\le\centr{A_\Gamma}{a}$. By means of the isomorphism $\centr{A_\Gamma}{a}\cong\Span a\times F$ (for some finitely generated free group $F$), for $i\in\{1,2,3\}$, let us write $y_i=(a^{n_i},x_i)$. Because $y_1$ and $y_2$ commute, we must have that $x_1$ and $x_2$ commute, meaning that there exists $z'\in F\setminus\{1\}$ such that $x_1,x_2\in\Span {z'}$, since any abelian subgroup of $F$ must be cyclic. Similarly, because $y_2$ and $y_3$ commute, there exists $z''\in F\setminus\{1\}$ such that $x_3\in\Span {z''}$. In particular, there exists $z\in F$ such that $x_1,x_2,x_3\in\Span z$. It follows that $y_1$ and $y_3$ commute, against Lemma~\ref{lem:no Z^3}.
\end{proof}

\begin{lem}
    \label{lem:intersection fps}
    Let us work under the hypotheses of Setting~\ref{set:setting}. For every $i\in\Z_n$, $\fix{y_i}$ and $\fix{y_{i+1}}$ intersect.
\end{lem}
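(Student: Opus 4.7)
The plan is to split into two cases based on the value of the label $m_{i,i+1}$, leveraging the previous observation that each $y_j$ acts elliptically on $D_\Gamma$. In either case the starting point is the same: since $u_i$ and $u_{i+1}$ span an edge of $C_n$, van der Lek's theorem identifies the standard parabolic subgroup $\Span{u_i,u_{i+1}}_{A_{C_n}}$ with the dihedral Artin group $\da{m_{i,i+1}}$, and the injectivity of $\phi$ promotes this to an isomorphism $\Span{y_i,y_{i+1}}_{A_\Gamma}\cong \da{m_{i,i+1}}$.

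If $m_{i,i+1}\ge 3$, I would apply Lemma~\ref{lem:dihedral of hyp-type are parab} directly to conclude that $\Span{y_i,y_{i+1}}$ is contained in a dihedral parabolic subgroup $\conj{\Span{s,t}}{g}$ of $A_\Gamma$, where $\{s,t\}\in\edge\Gamma$ with $m_{st}\ge 3$. Such a dihedral parabolic subgroup is exactly the pointwise stabiliser (the action $A_\Gamma\actson D_\Gamma$ is without inversion) of the type-two vertex $v=g\cdot\Span{s,t}$ of $D_\Gamma$, and thus $v\in\fix{y_i}\cap\fix{y_{i+1}}$ as required.

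If $m_{i,i+1}=2$, then $y_i$ and $y_{i+1}$ commute, but Lemma~\ref{lem:dihedral of hyp-type are parab} no longer applies. Instead I would invoke the standard CAT(0) fact that two commuting elliptic isometries of a complete CAT$(0)$ space share a fixed point: the set $\fix{y_i}$ is a non-empty, closed, convex subspace of $D_\Gamma$; commutativity gives $y_{i+1}(\fix{y_i})=\fix{y_i}$; the restriction of $y_{i+1}$ to $\fix{y_i}$ still has bounded orbits (since $y_{i+1}$ is elliptic on $D_\Gamma$), so Bruhat--Tits/Cartan produces a fixed point of $y_{i+1}$ inside $\fix{y_i}$. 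The Moussong metric on $D_\Gamma$, recalled to be \cat0, supplies the required ambient geometry.

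The only subtlety I anticipate is precisely this right-angled case: because Lemma~\ref{lem:dihedral of hyp-type are parab} is stated under the hypothesis $m\ge 3$, the proof cannot proceed uniformly by reducing to a parabolic subgroup. One could alternatively route through Theorem~\ref{thm:classify Z^2 subgrp} (placing $\Span{y_i,y_{i+1}}\cong\Z^2$ either in a dihedral parabolic or in a standard-generator centraliser $\Span a\times F$, then showing the commuting elliptic pair fixes a vertex of the associated standard tree), but the CAT(0) common-fixed-point argument is cleaner and avoids casework.
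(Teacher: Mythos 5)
Your proposal is correct. The case $m_{i,i+1}\ge3$ is exactly the paper's argument: Lemma~\ref{lem:dihedral of hyp-type are parab} places $\Span{y_i,y_{i+1}}$ in a dihedral parabolic subgroup, which is the stabiliser of a type-two vertex. In the case $m_{i,i+1}=2$, however, you genuinely diverge from the paper. The paper takes the route you flag as an alternative: it invokes Theorem~\ref{thm:classify Z^2 subgrp} to place $\Span{y_i,y_{i+1}}\cong\Z^2$ in a centraliser $\centr{A_\Gamma}{a}\cong\Span a\times F$, and then splits into sub-cases according to whether $y_i$, $y_{i+1}$ are tree-elliptic or vertex-elliptic, ruling out the bad configuration (both vertex-elliptic with distinct fixed vertices on $T_a$) by a ping-pong argument producing a non-abelian free subgroup. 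Your Bruhat--Tits argument --- $\fix{y_i}$ is non-empty, closed and convex, is preserved by $y_{i+1}$ by commutativity, and $y_{i+1}$ has bounded orbits on it --- is more elementary and buys something real: it needs neither the classification of virtually $\Z^2$ subgroups nor the hyperbolic-type hypothesis, only that $D_\Gamma$ with the Moussong metric is complete \cat 0 (completeness holds since $D_\Gamma$ is a Euclidean complex with finitely many shapes, and $\fix{y_i}$ is then itself complete). The paper's longer route earns some structural information (where the $\Z^2$ sits relative to standard trees), but that information is re-derived where needed in the later lemmas, so nothing is lost by your shortcut. The one hypothesis you must not forget to cite is ellipticity of the $y_j$'s, which you correctly attribute to the preceding lemma.
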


\begin{proof}
    Because $\Span{y_i,y_{i+1}}\cong\da m$ for some $m\in\N_{\ge2}$, it is either contained in a dihedral parabolic subgroup of $A_\Gamma$ or, when $m=2$, in the commutator of the conjugate of a standard generator (see Lemma~\ref{lem:dihedral of hyp-type are parab} and Theorem~\ref{thm:classify Z^2 subgrp}). In the first case, $y_i$ and $y_{i+1}$ both fix a vertex of type~$2$. In the second case, up to conjugation, we can assume that there exists $a\in\ver\Gamma$ such that $\Span{y_{i},y_{i+1}}\le\centr {A_\Gamma} a\cong\Span{a}\times F$. If one between $y_i$ and $y_{i+1}$ is tree-elliptic, then we are done; if they are both vertex-elliptic, then they must fix the same vertex, for, if not, then a standard ping-pong argument in the standard trees associated with $a$ shows that they would generate a non-abelian free subgroup.
\end{proof}

\begin{lem}
    \label{lem:distinct std trees}
    Let us work under the hypotheses of Setting~\ref{set:setting}. For every $i,j\in\Z_n$, if ${y_i}$ and ${y_{j}}$ act tree-elliptically on $D_\Gamma$ fixing the same standard tree, then $i=j$. 
\end{lem}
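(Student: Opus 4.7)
The plan is to show that if $y_i$ and $y_j$ fix the same standard tree $T$, then $\Span{y_i,y_j}$ is infinite cyclic; then I would use injectivity of $\phi$ together with van der Lek's theorem to force $i=j$.

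First I would identify the pointwise stabiliser of a standard tree as a cyclic subgroup. Up to translation, $T=g\cdot T_a$ for some $g\in A_\Gamma$ and some standard generator $a\in\ver\Gamma$, so it suffices to treat the case $T=T_a$. On the one hand, $\Span{a}$ fixes $T_a=\fix{a}$ pointwise. On the other hand, any element fixing $T_a$ pointwise fixes, in particular, the type-$1$ vertex of $D_\Gamma$ corresponding to the parabolic subgroup $\Span{a}$, whose stabiliser in $A_\Gamma$ is precisely $\Span{a}$. Hence the pointwise stabiliser of $T_a$ equals $\Span{a}$, and $y_i$ and $y_j$ lie in a common infinite cyclic subgroup of $A_\Gamma$.

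Next I would derive a contradiction assuming $i\ne j$. By van der Lek, the subgroup $\Span{u_i,u_j}\le A_{C_n}$ is isomorphic to the Artin group $A_\Lambda$, where $\Lambda$ is the full subgraph of $C_n$ spanned by $\{u_i,u_j\}$. Since $\phi$ restricts to an injection $A_\Lambda\hookrightarrow\Span{y_i,y_j}\cong\Z$, it suffices to observe that $A_\Lambda$ does not embed into $\Z$: when $u_i$ and $u_j$ are non-adjacent in $C_n$, $A_\Lambda\cong F_2$ is non-abelian; when they are adjacent with label $m_{ij}$, $A_\Lambda\cong\da{m_{ij}}$, which is $\Z^2$ for $m_{ij}=2$ and non-abelian for $m_{ij}\ge3$. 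In either case $A_\Lambda$ cannot embed into $\Z$, forcing $i=j$.

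I do not foresee a significant obstacle: the only non-trivial ingredient is the cyclicity of the pointwise stabiliser of a standard tree, which is immediate from the fact that the stabilisers of type-$1$ vertices of $D_\Gamma$ are cyclic. The combinatorial structure of the cycle $C_n$ and van der Lek's theorem then do the rest.
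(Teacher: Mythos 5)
Your proposal is correct and follows essentially the same route as the paper: reduce to the observation that the pointwise stabiliser of a standard tree $T_a$ is the cyclic group $\Span a$, so $\Span{y_i,y_j}\le\Span a$, and then note that for $i\ne j$ the subgroup $\Span{u_i,u_j}\le A_{C_n}$ is never cyclic, contradicting injectivity of $\phi$. The paper's proof is just a terser version of this, leaving the last step (which you spell out via van der Lek) implicit.
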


\begin{proof}
    Up to translation in $D_\Gamma$, we may assume that $y_i$ and $y_j$ fix the standard tree $T_a$, for some $a\in\ver\Gamma$. In particular, we have that $\Span{y_i,y_j}\le\Span a$, which occurs if and only if $i=j$.
\end{proof}

\begin{lem}
    \label{lem:cycle of std trees}
    Let us work under the hypotheses of Setting~\ref{set:setting}. Let $k=\abs{\{i\in\Z_n:\text{$y_i$ acts tree-elliptically on $D_\Gamma$}\}}$. Then $k\ge3$ and there are $k$ distinct standard trees $T_1,\dots,T_k$ of $D_\Gamma$ such that 
    \[
    \bigcup_{i\in\Z_n}\fix{y_i}=\bigcup_{j=1}^kT_j.
    \]
\end{lem}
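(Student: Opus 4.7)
The plan is to partition $\Z_n$ into $I = \{i : y_i \text{ is tree-elliptic}\}$ and $J = \Z_n \setminus I$: for $i \in I$, let $T_i \coloneqq \fix{y_i}$ be the associated standard tree, and for $i \in J$, let $p_i$ be the unique type-two vertex in $\fix{y_i}$. By Lemma~\ref{lem:distinct std trees}, the trees $\{T_i\}_{i \in I}$ are pairwise distinct. Setting $k = |I|$, the task is to prove $k \ge 3$ and that every $p_i$ lies on some $T_j$. The key facts coming from Lemma~\ref{lem:intersection fps} are: (a) consecutive indices in $J$ share their fixed vertex, $p_i = p_{i+1}$; (b) when exactly one of $i, i+1$ lies in $J$, the corresponding vertex $p$ belongs to the neighbouring tree $T$; (c) consecutive indices in $I$ yield distinct trees meeting in a unique type-two vertex.

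The first step is to show $k \ge 3$. Assuming $k \le 2$ for contradiction, I would produce a type-two vertex $v$ fixed by every $y_i$, so that $\phi(A_{C_n}) \subseteq \stab{A_\Gamma}{v}$, which is a dihedral spherical-type parabolic subgroup, contradicting Corollary~\ref{cor:cycle subgrp dihedral}. For $k = 0$, iterating (a) around the cycle gives a common vertex $v = p_1 = \cdots = p_n$. For $k = 1$, say $I = \{a\}$, (a) yields a common $v$ for all $p_i$ with $i \ne a$, while (b) places $v \in T_a$, so $y_a$ also fixes $v$. For $k = 2$, say $I = \{a, b\}$, the indices in $J$ form (at most) two arcs in $\Z_n$; each non-empty arc shares a vertex by (a) which by (b) lies in $T_a \cap T_b$. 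Since distinct standard trees meet in at most one vertex, these (together with the intersection point from (c) in the case when $a, b$ are adjacent and one arc is empty) all coincide to a single $v$, which $y_a$ and $y_b$ also fix.

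With $k \ge 3$ in hand, it remains to check that every $p_i$ ($i \in J$) lies on some $T_j$ ($j \in I$). Since $k \ge 1$, the maximal run of vertex-elliptic indices containing $i$ has a tree-elliptic neighbour $j$; applying (a) along the run and (b) at the boundary places $p_i$ on $T_j$. Enumerating $\{T_i\}_{i \in I}$ as $T_1, \dots, T_k$ then yields $\bigcup_{i \in \Z_n} \fix{y_i} = \bigcup_{j=1}^{k} T_j$, as desired. The delicate point I expect to be the main obstacle is the $k = 2$ case, where the various subcases (one or two non-empty arcs, $a$ and $b$ adjacent or not in $C_n$) must be reconciled using the uniqueness of the intersection of distinct standard trees to pin down the common vertex $v$.
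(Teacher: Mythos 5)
Your proposal is correct and follows essentially the same route as the paper: rule out $k\le2$ by producing a single type-two vertex fixed by every $y_i$ (via Lemma~\ref{lem:intersection fps}, Lemma~\ref{lem:distinct std trees} and the uniqueness of the intersection of distinct standard trees) and then contradict Corollary~\ref{cor:cycle subgrp dihedral}, with the containment obtained by propagating fixed vertices along maximal runs of vertex-elliptic indices to a neighbouring tree. Your arc-based bookkeeping in the $k=2$ case is a slightly cleaner organisation than the paper's contradiction argument, but it rests on exactly the same facts.
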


\begin{proof}
    Let us show that $k\ge3$ by showing that the cases $k\in\{0,1,2\}$ are not admissible. Note that the case $A_{C_n}=\Z^3$ is already ruled out by Lemma~\ref{lem:no Z^3}. Moreover, as soon as we prove that the image of $\phi$ is contained in a dihedral subgroup, we obtain a contradiction, using Corollary~\ref{cor:cycle subgrp dihedral}.\medskip
    
    If $k=0$, then it means that all the $y_i$'s act vertex-elliptically on $D_\Gamma$. By Lemma~\ref{lem:intersection fps}, all the $y_i$'s must fix the same vertex, that is, the image of $\phi$ is contained in a dihedral subgroup of $A_\Gamma$. \par 
    If $k=1$, then we may assume that $y_1$ is the only tree-elliptic element. By Lemma~\ref{lem:intersection fps}, the elements $y_2,\dots,y_n$ fix the same vertex of type~$2$ that lies in the standard tree $\fix {y_1}$. In particular, the image of $\phi$ is contained in a dihedral parabolic subgroup, a contradiction. \par
    Let us assume $k=2$. Up to a shift of the indices, we may assume that $T_1=\fix {y_1}$ and let us denote $v\coloneqq T_1\cap T_2$, which is exactly one vertex, because two distinct standard trees intersect in at most one vertex~\cite{martin2022acylindrical}*{Remark 4.4}. Let us assume by contradiction that $\fix{y_2}$ is neither $v$ nor $T_2$. By Lemma~\ref{lem:distinct std trees}, $y_2$ acts vertex-elliptically, hence fixing a vertex of $T_1$ that is distinct from $v$. It then follows that there is $i\in\{3,\dots,n-1\}$ such that $\fix {y_{i}}=T_1$, against Lemma~\ref{lem:distinct std trees}. By repeating the same argument for $T_2$, it follows that all the $y_i$'s must fix the vertex~$v$, that is, the image of $\phi$ is contained in a dihedral Artin group. \medskip 
    
    In order to show the containment, it is sufficient to make the following observation: if $y_i$ is tree-elliptic, then $\fix {y_i}$ is one of the $T_j$'s; if $y_i$ is vertex-elliptic, then $\fix {y_i}\subseteq \fix {y_{i-1}}$.
\end{proof}

\begin{lem}
    \label{lem:mono->cycle}
    Let us work under the hypotheses of Setting~\ref{set:setting}. Let $A_\Gamma$ be a two-dimensional Artin group of hyperbolic type, let $n\in\N_{\ge3}$ and let $\phi\colon A_{C_n}\to A_\Gamma$ be a monomorphism of groups. Let $k$ and the $T_i$'s be as in the statement of Lemma~\ref{lem:cycle of std trees}. Then $\{T_i\}_{i\in\Z_k}$ is a cycle of standard trees.
\end{lem}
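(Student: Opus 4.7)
The plan is to verify each of the defining conditions of a cycle of standard trees (Definition~\ref{def:cycStdTree}) for the family $\{T_j\}_{j\in\Z_k}$ produced by Lemma~\ref{lem:cycle of std trees}. After cyclically reindexing I will write the tree-elliptic generators as $y_{i_1},\dots,y_{i_k}$ with $i_1<\dots<i_k$ in cyclic order on $\Z_n$, so that $T_j=\fix{y_{i_j}}$. These trees are pairwise distinct by Lemma~\ref{lem:distinct std trees}, and $k\ge3$ by Lemma~\ref{lem:cycle of std trees}.

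To establish $T_j\cap T_{j+1}\ne\varnothing$ I split on whether $i_{j+1}=i_j+1$. If so, Lemma~\ref{lem:intersection fps} applied directly to $y_{i_j}$ and $y_{i_{j+1}}$ gives the intersection. Otherwise the generators $y_l$ with $i_j<l<i_{j+1}$ are vertex-elliptic, and the inclusion chain $\fix{y_l}\subseteq\fix{y_{l-1}}$ recorded in the proof of Lemma~\ref{lem:cycle of std trees}, combined with the fact that a vertex-elliptic fixed set is a single type-two vertex, forces all of them to share a common fixed vertex $w_j\in T_j$. A last application of Lemma~\ref{lem:intersection fps} to $y_{i_{j+1}-1}$ and $y_{i_{j+1}}$ then places $w_j$ in $T_{j+1}$.

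To establish $T_{j-1}\cap T_j\cap T_{j+1}=\varnothing$ I argue by contradiction. Suppose $v$ lies in this triple intersection; then $v$ must be of type two, since distinct standard trees meet in at most one vertex and a type-one vertex has cyclic stabiliser. By the previous paragraph every generator $y_l$ with $i_{j-1}\le l\le i_{j+1}$ fixes $v$, so the standard parabolic subgroup $A_P\le A_{C_n}$ spanned by the corresponding consecutive vertices -- a labelled path on $s:=i_{j+1}-i_{j-1}+1\ge3$ vertices, by van der Lek's theorem -- embeds into $\stab{A_\Gamma}{v}\cong\da{m_v}$. If $P$ carries some edge of label $\ge3$, or if $s\ge4$, then the powering construction of \cite{jankiewicz2022right} exhibits a RAAG subgroup of $A_P$ on a path of at least four vertices, contradicting Lemma~\ref{lem:raagSubGrpOfDihedral}.

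The hard sub-case -- and the main technical obstacle -- is $s=3$ with both edges of $P$ of label~$2$, because then $A_P\cong R_{P_3}\cong F_2\times\Z$ actually does embed into $\da{m_v}$ whenever $m_v\ge3$, so the RAAG obstruction alone is insufficient. To close it I use the centralizer of $y_{i_j}$ inside $\stab{A_\Gamma}{v}$: since $u_{i_j}$ is central in $R_{P_3}$, this centralizer contains $y_{i_{j-1}}$ and $y_{i_{j+1}}$, and a direct dihedral computation shows that it is abelian of rank two (isomorphic to $\Z^2$ for either value of $m_v$, via the classical description of centralizers in $\da{m_v}$). Hence $y_{i_{j-1}}$ and $y_{i_{j+1}}$ commute, so by injectivity of $\phi$ the generators $u_{i_{j-1}}$ and $u_{i_{j+1}}$ commute in $A_{C_n}$; this is impossible for $n\ge4$ (they are not adjacent in $C_n$), and for $n=3$ it forces $A_{C_3}\cong\Z^3$, contradicting Lemma~\ref{lem:no Z^3}.
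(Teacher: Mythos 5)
Your proof is correct, and its skeleton is the same as the paper's: condition (1) of Definition~\ref{def:cycStdTree} via Lemma~\ref{lem:intersection fps} (chained through the vertex-elliptic generators), condition (2) via the classification of RAAG subgroups of dihedral Artin groups. The paper disposes of condition (2) in half a sentence by citing Lemma~\ref{lem:raagSubGrpOfDihedral}, and the real content of your write-up is that you correctly isolate the one configuration that this citation does \emph{not} exclude: three consecutive tree-elliptic generators $y_{p-1},y_p,y_{p+1}$ with $m_{p-1,p}=m_{p,p+1}=2$ whose trees would all pass through a type-two vertex $v$ of label at least~$3$. There the relevant parabolic is $A_P\cong R_{P_3}\cong\Z\times F_2$, which genuinely embeds in $\da{m_v}$ (even with finite index), so the RAAG obstruction is vacuous and an extra argument is needed. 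Your centraliser argument closes this case: $y_{p\pm1}$ lie in $\centr{\stab{A_\Gamma}{v}}{y_p}$, and that centraliser is abelian. Do state explicitly the one fact this hinges on, namely that $y_p$ is \emph{not} central in $\stab{A_\Gamma}{v}$: a non-trivial central element of a dihedral parabolic of label at least~$3$ is vertex-elliptic with fixed-point set exactly $v$, whereas $y_p$ fixes the whole tree $T_j$; only then is the centraliser a proper (hence abelian, $\Z$ or $\Z^2$) subgroup of $\da{m_v}$. From the resulting commutation of $y_{p-1}$ and $y_{p+1}$ you correctly derive a contradiction with the injectivity of $\phi$ for $n\ge4$, and for $n=3$ with Lemma~\ref{lem:no Z^3} (or with the non-abelianness of the third dihedral parabolic). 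The remaining cases of your analysis ($s\ge4$, or an edge of $P$ of label at least~$3$, via the powering argument of Corollary~\ref{cor:cycle subgrp dihedral}) are exactly what the paper's citation covers; the only degenerate situation worth a word is when the segment of generators fixing $v$ wraps around all of $C_n$, in which case $\phi(A_{C_n})$ lies in a dihedral parabolic and Corollary~\ref{cor:cycle subgrp dihedral} applies directly.
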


\begin{proof}
    Let $i\in\Z_n$. Lemma~\ref{lem:intersection fps} gives that $T_i\cap T_{i+1}\ne\varnothing$, while Lemma~\ref{lem:raagSubGrpOfDihedral} gives that $T_{i-1}\cap T_i\cap T_{i+1}=\varnothing$.
\end{proof}

\subsection{The RAAG case}

We turn our attention to embeddings of cycle RAAGs. These are more rigid than embeddings of cycle Artin groups, in the following sense.

\begin{lem}
    \label{lem:cycStdTrsAssToRaag}
    Let $A_\Gamma$ be a two-dimensional Artin group of hyperbolic type, let $n\in\N_{\ge4}$ and let $\phi\colon R_{C_n}\to A_\Gamma$ be a monomorphism of groups. Let $i\in\Z_n$.
    \begin{enumerate}
        \item If both $\phi(u_i)$ and $\phi(u_{i+1})$ act tree-elliptically on $D_\Gamma$, then $\fix {\phi(u_i)}\cap \fix {\phi(u_{i+1})}$ consists of a type-two vertex of label~$2$.
        \item If $\phi(u_i)$ acts tree-elliptically and $\phi(u_{i+1})$ acts vertex-elliptically on $D_\Gamma$, then $\fix {\phi(u_{i+1})}$ consists of a type-two vertex of label at least~$3$ and $\phi(u_{i+2})$ acts tree-elliptically on $D_\Gamma$.
        \item The elements $\phi(u_i)$ and $\phi(u_{i+1})$ cannot act both vertex-elliptically on $D_\Gamma$.
    \end{enumerate}
\end{lem}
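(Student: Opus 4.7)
My plan rests on a single device applied throughout: if $\phi(u_j)$ commutes with $\phi(u_{j+1})$ and the latter acts elliptically on $D_\Gamma$ with fixed-point set $F$, the former preserves $F$. Since each $\phi(u_j)$ acts elliptically, as established before this lemma, this lets us propagate fixed-point information around the cycle. Combined with Theorem~\ref{thm:classify Z^2 subgrp} and with Lemma~\ref{lem:raagSubGrpOfDihedral} and Corollary~\ref{cor:cycle subgrp dihedral} on (non)embeddings of RAAGs into dihedral Artin groups, the three items will follow.

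For (1), I would first show that $T_i \coloneqq \fix\phi(u_i)$ and $T_{i+1} \coloneqq \fix\phi(u_{i+1})$ are distinct by Lemma~\ref{lem:distinct std trees}, intersect by Lemma~\ref{lem:intersection fps}, and meet in a single type-two vertex $v$ (two distinct standard trees meet in at most one vertex, which must be of type two since type-one vertices lie in a unique standard tree). Applying Theorem~\ref{thm:classify Z^2 subgrp} to $\Span{\phi(u_i), \phi(u_{i+1})} \cong \Z^2$ leaves two possibilities: either this $\Z^2$ sits in a dihedral parabolic, necessarily $\stab{A_\Gamma}{v} = \da{m(v)}$, or it lies in a centraliser $\centr{A_\Gamma}{c} \cong \Span{c} \times F$ of some standard generator $c$. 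In the first case, for $m(v) \ge 3$ the tree-elliptic elements of any $\Z^2 \le \da{m(v)}$ form a cyclic subgroup with a common fixed tree, contradicting $T_i \ne T_{i+1}$; in the second case, two commuting tree-elliptic elements of $\Span{c} \times F$ with distinct fixed trees must be, up to powers, of the form $c$ and some standard generator $d \in F$, forcing $\{c,d\}$ to be an edge of label $2$. Either way, $m(v) = 2$.

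For (2.i), the commutations of $\phi(u_i)$ and $\phi(u_{i+2})$ with the vertex-elliptic $\phi(u_{i+1})$ place the whole triple inside $\stab{A_\Gamma}{v} = \da{m(v)}$, where $v = \fix\phi(u_{i+1})$. Hence $\Span{\phi(u_i), \phi(u_{i+1}), \phi(u_{i+2})} \cong R_{P_3} = \Z \times F_2$ embeds into $\da{m(v)}$. Since $\Z \times F_2$ is non-abelian, $\da{m(v)} \ne \da 2 = \Z^2$, i.e., $m(v) \ge 3$.

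I would handle (3) before (2.ii). If $\phi(u_i)$ and $\phi(u_{i+1})$ were both vertex-elliptic, the preservation principle forces their unique fixed vertices to coincide at a common type-two $v$; commutations with $\phi(u_{i-1})$ and $\phi(u_{i+2})$ then place the four consecutive generators in $\stab{A_\Gamma}{v} = \da{m(v)}$. They span $R_{P_4}$ when $n \ge 5$ and $R_{C_4}$ when $n = 4$, neither of which embeds into any dihedral Artin group by Lemma~\ref{lem:raagSubGrpOfDihedral}(2)--(3) and Corollary~\ref{cor:cycle subgrp dihedral}, a contradiction. Part (2.ii) is then immediate: $\phi(u_{i+2})$ is elliptic, and if it were vertex-elliptic the pair $\phi(u_{i+1}), \phi(u_{i+2})$ would violate (3), so it must be tree-elliptic. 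The main technical step I foresee is in (1), specifically identifying the tree-elliptic elements inside $\Z^2$-subgroups of $\da{m(v)}$ and of $\centr{A_\Gamma}{c}$; this is where the two-dimensional hyperbolic-type hypothesis ultimately enters, via Theorem~\ref{thm:classify Z^2 subgrp}.
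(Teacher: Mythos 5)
Your proposal is correct and rests on the same ingredients as the paper's proof (Lemma~\ref{lem:intersection fps}, the classification of elliptic elements, Lemma~\ref{lem:raagSubGrpOfDihedral} and Corollary~\ref{cor:cycle subgrp dihedral}), with two differences worth noting. First, in (1) the detour through Theorem~\ref{thm:classify Z^2 subgrp} is unnecessary: since $\fix{\phi(u_i)}$ and $\fix{\phi(u_{i+1})}$ already meet in the type-two vertex $v$, both elements fix $v$ and $\Span{\phi(u_i),\phi(u_{i+1})}\le\stab{A_\Gamma}{v}$ holds directly, so only your first case occurs and the centraliser case can be discarded. The remaining content of (1) --- that two commuting tree-elliptic elements of a dihedral parabolic of label at least~$3$ must share their fixed tree --- is precisely what the paper outsources to Blufstein--Martin--Vaskou (tree-elliptic elements of a dihedral parabolic are conjugates of powers of its two standard generators, and two such elements generating $\Z^2$ force the label to be~$2$); your phrase ``form a cyclic subgroup with a common fixed tree'' asserts this without proof, so it should be proved or cited, as it is the one genuinely non-elementary step. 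Second, you invert the logical order of (2.ii) and (3): you prove (3) directly (two consecutive vertex-elliptic images would force four consecutive images into $\stab{A_\Gamma}{v}$, yielding $R_{P_4}$ or $R_{C_4}$ inside a dihedral Artin group, which is impossible) and then deduce (2.ii) as an immediate consequence, whereas the paper proves (2.ii) by that same four-generator argument and then deduces (3) by locating the first tree-elliptic element around the cycle. Your ordering is arguably cleaner, and both are valid.
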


\begin{proof}
    For $j\in\Z_n$, let $y_j=\phi(u_j)$. It is not restrictive to assume that $i=1$. By Lemma~\ref{lem:intersection fps} and Lemma~\ref{lem:distinct std trees} $\fix {y_1}$ and $\fix {y_2}$ intersect in a type-two vertex $v\coloneqq g\Span {a,b}$, for some $\{a,b\}\in\edge\Gamma$ and for some $g\in A_\Gamma$. \medskip 
    
    Let us assume that both $y_1$ and $y_2$ act tree-elliptically. Because $y_1$ and $y_2$ fix~$v$, we have that $y_1,y_2\in g\Span{a,b}g^{-1}$. It follows that there exists a set of vertices $\{x_1,x_2\}=\{a,b\}$ and two elements $h_1,h_2\in\Span {a,b}$ such that $y_1=(gh_1)x_1(gh_1)^{-1}$ and $y_2=(gh_2)x_2(gh_2)^{-1}$~\cite{blufstein2023parabolic}*{Theorem 1.1}. Because $\Span {y_1,y_2}\cong \Z^2$, the only possibility is that $m_{ab}=2$~\cite{blufstein2024homomorphisms}*{Lemma 2.12}.\medskip 

    Let us now analyse the case of $y_1$ acting tree-elliptically and $y_2$ acting vertex-elliptically. By Lemma~\ref{lem:intersection fps}, the elements $y_1,y_2,y_3$ all fix $v$ and thus $\Span{y_1,y_3}\le\stab{A_\Gamma}{v}$, in particular. Because $\Span{y_1,y_3}\cong F_2$, it follows that $\stab{A_\Gamma}{v}$ cannot be abelian, that is, $m_{ab}\ge3$.\par
    Let us further assume by contradiction that $y_3$ acts vertex-elliptically on $D_\Gamma$, hence fixing~$v$. Again by Lemma~\ref{lem:intersection fps}, it follows that $y_4$ fixes $v$ as well and thus $\Span{y_1,\dots,y_4}\le\stab{A_\Gamma}{v}\cong\da{m_{ab}}$. If $n=4$, then $\Span{y_1,\dots,y_4}\cong R_{C_4}$; if $n\ge 5$, then $\Span{y_1,\dots,y_4}\cong R_{P_4}$: in both cases, Lemma~\ref{lem:raagSubGrpOfDihedral} yields a contradiction.\medskip

    Finally, let us assume that $y_1$ and $y_2$ act both vertex-elliptically. Let $\ell$ be the minimum of the set
    \[
    \{j\in\{3,\dots,n\}:\text{$y_j$ acts tree-elliptically on $D_\Gamma$}\},
    \]
    whose existence is granted by Lemma~\ref{lem:cycle of std trees}. A slight modification of the argument of the previous point applied to the elements $y_{\ell-2},y_{\ell-1}$ and $y_\ell$, yields a contradiction.
\end{proof}

\section{Cycles of standard trees and disc diagrams in the Deligne complex}
\label{sec:comb gbt}

Let $A_\Gamma$ be a two-dimensional Artin group of hyperbolic type. We saw in Section~\ref{sec:embeddCycArtGrp} that embeddings of cycle Artin groups in $A_\Gamma$ correspond to cycles of standard trees in $D_\Gamma$. We now associate a combinatorial structure to cycles of standard trees, that is, the one of a \emph{disc diagram}.

\subsection{Combinatorial Gauss--Bonnet theorem} We recall the notion of disc diagram and the combinatorial version of the Gauss--Bonnet theorem.

\begin{defi}[disc diagrams]
    Let $D$ be a two-dimensional finite contractible polygonal complex that embeds into a disc and let $X$ be a polygonal complex.
    \begin{enumerate}
        \item A \emph{disc diagram} is a combinatorial map $D\to X$;
        \item If $D$ is homeomorphic to a disc, then $D\to X$ is said to be a \emph{non-singular} disc diagram. In this case, such homeomorphism gives a definition of boundary $\partial D$ and interior $\operatorname{int}(D)$ of the disc diagram;
        \item A disc diagram $D\to X$ is \emph{reduced} if distinct $2$-cells of $D$ that share an edge get mapped to distinct $2$-cells of $X$;
        \item a disc diagram $D\to X$ is \emph{non-degenerate} if it is injective in every $2$-cell.
    \end{enumerate} 
\end{defi}

For a $2$-cell $P$ of $D$ and a vertex $v\in P$, the pair $(v,P)$ is said to be the \emph{corner} of~$P$ at $v$. For a vertex $v$ of $D$ we denote by $\corner v$ the set of corners at $v$; for a $2$-cell $P$ of~$D$ we denote by $\corner P$ the set of corners of $P$. We denote by $\corner D$ the set of corners of $D$.

\begin{defi}[curvature]
    \label{def:curvature}
    Let $D$ be a disc diagram and let $\angle\colon\corner D\to\R_{\ge0}$ be a map, which we call an \emph{assignment of angles}.
    \begin{enumerate}
        \item For a $2$-cell $P$ whose boundary consists of $n$ $1$-cells, the \emph{curvature} of $P$ is 
        \[
        \kappa(P)\coloneqq (2-n)\pi+\sum_{c\in \corner{P}}\angle (c)
        \]
        \item For a vertex $v$ contained in the boundary $\partial D$, the \emph{curvature} of $v$ is
        \[
        \kappa(v)\coloneqq \pi-\sum_{c\in\corner v}\angle(c)
        \]
        \item For a vertex $v$ contained in the interior of $D$, the \emph{curvature} of $v$ is
        \[
        \kappa(v)\coloneqq 2\pi-\sum_{c\in\corner v}\angle (c)
        \]
    \end{enumerate}
\end{defi}

In our proofs we will consider disc diagrams on the Deligne complex. Depending on whether we consider the simplicial or the cubical version, we obtain two different assignments of angles.

\begin{example}[Moussong angles]\label{ex:moussongAngles}
    Let $A_\Gamma$ be a two-dimensional Artin group. We think of the simplicial Deligne complex $D_\Gamma$ endowed with the Moussong metric. For a disc diagram $F\colon D\to D_\Gamma$, to each corner $(v,P)$ of $D$ we assign the angle $\angle(v,P)\coloneqq\angle(F(v),F(P))$. We refer to Figure~\ref{fig:deligneTriangle} for the angles on the triangles of~$D_\Gamma$.
\end{example}

\begin{example}[cubical angles]\label{ex:cubicalAngles}
    If $A_\Gamma$ is a two dimensional and triangle free, then we endow the cubical Deligne complex $C_\Gamma$ with the cubical metric (that is, all $2$-cubes are euclidean squares). For a disc diagram $G\colon E\to C_\Gamma$, to each corner $(v,P)$ of $E$ we assign the angle $\angle(v,P)\coloneqq\angle(G(v),G(P))=\frac\pi2$.
\end{example}

\begin{thm}[combinatorial Gauss--Bonnet theorem,\cite{ballmann1996nonpositively}]
    \label{thm:comb gbt}
    Let $D$ be a disc diagram with an assignment of angles $\angle\colon \corner D\to \R_{\ge 0}$. Then
    \[
    \sum_{v\in D^{(0)}}\kappa (v)+\sum_{P\in D^{(2)}}\kappa(P)=2\pi.
    \]
\end{thm}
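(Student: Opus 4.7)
My approach is to expand the left-hand side using Definition~\ref{def:curvature}, observe that every angle contribution cancels, and reduce what remains to $2\pi\chi(D)$ via Euler's formula together with the contractibility of $D$.

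First I would group terms by their origin. Every corner $(v,P)\in\corner D$ contributes $+\angle(v,P)$ to the face-curvature $\kappa(P)$ and $-\angle(v,P)$ to the vertex-curvature $\kappa(v)$. Summing over all corners, these contributions cancel, leaving only the combinatorial parts of the curvatures:
\[
\sum_{v\in D^{(0)}}\kappa(v)+\sum_{P\in D^{(2)}}\kappa(P)=\pi\abs{V_\partial}+2\pi\abs{V_{\mathrm{int}}}+\sum_{P\in D^{(2)}}(2-n_P)\pi,
\]
where $V_\partial$ and $V_{\mathrm{int}}$ denote the sets of boundary and interior vertices of $D$, and $n_P$ is the number of $1$-cells on $\partial P$. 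Next, I would convert the last sum into edge counts by double-counting edge--face incidences: since $D$ embeds into a disc, each interior $1$-cell is incident to exactly two $2$-cells while each boundary $1$-cell is incident to exactly one, so $\sum_P n_P=2\abs{E_{\mathrm{int}}}+\abs{E_\partial}$. Substituting yields
\[
\sum_v\kappa(v)+\sum_P\kappa(P)=2\pi\bigl(\abs{V_{\mathrm{int}}}-\abs{E_{\mathrm{int}}}+\abs{F}\bigr)+\pi\bigl(\abs{V_\partial}-\abs{E_\partial}\bigr).
\]

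At this point I would invoke topology. The boundary of $D$ is a cycle in $\partial\mathbb{D}^2$, so $\abs{V_\partial}=\abs{E_\partial}$, killing the second summand. Since $D$ is contractible, $\chi(D)=\abs{V_{\mathrm{int}}}+\abs{V_\partial}-\abs{E_{\mathrm{int}}}-\abs{E_\partial}+\abs{F}=1$, which combined with $\abs{V_\partial}=\abs{E_\partial}$ yields $\abs{V_{\mathrm{int}}}-\abs{E_{\mathrm{int}}}+\abs{F}=1$. The total therefore equals $2\pi$, as required.

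The main obstacle is the singular case, where $D$ is contractible but not homeomorphic to a disc (for instance two $2$-cells meeting only at a cut-vertex, or a $1$-cell whose two sides both lie on $\partial D$). In this situation the distinction between ``interior'' and ``boundary'' cells must be made via the embedding $D\hookrightarrow\mathbb{D}^2$, counting boundary incidences with multiplicity. One then verifies that the double-counting identity $\sum_P n_P=2\abs{E_{\mathrm{int}}}+\abs{E_\partial}$ and the boundary relation $\abs{V_\partial}=\abs{E_\partial}$ remain valid with these multiplicities, after which the argument above goes through unchanged. Alternatively, one may induct on the number of cut-vertices, splitting $D$ along them into non-singular sub-diagrams, apply the non-singular case to each piece, and check that the curvature contributions recombine correctly at the cut-points.
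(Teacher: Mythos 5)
The paper does not prove this statement; it is quoted from the cited reference, so there is no internal proof to compare against. Your Euler--characteristic argument is the standard one and, for non-singular diagrams, it is complete and correct: the angle terms cancel corner by corner, the incidence count $\sum_P n_P=2\abs{E_{\mathrm{int}}}+\abs{E_\partial}$ holds because $D$ is homeomorphic to a disc, $\abs{V_\partial}=\abs{E_\partial}$ because $\partial D$ is a circle, and $\chi(D)=1$ finishes it. Since every filling diagram used in the paper is non-singular by construction, this covers all the cases the paper actually needs.

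One caution about your final paragraph. With the paper's literal Definition~\ref{def:curvature}, where every boundary vertex gets $\kappa(v)=\pi-\sum_{c\in\corner v}\angle(c)$ regardless of how many times the boundary path visits it, the identity genuinely fails for singular diagrams: for two triangles glued at a single cut-vertex (all angles $\pi/3$, say) the total curvature is $3\pi$, not $2\pi$. So it is not merely a matter of re-verifying the two counting identities with multiplicities; the definition of boundary-vertex curvature itself must be changed to $\kappa(v)=(2-m_v)\pi-\sum_{c\in\corner v}\angle(c)$, where $m_v$ is the number of times the boundary path passes through $v$ (equivalently, the number of boundary arcs at $v$). With that modification your computation does go through, since then $\sum_v m_v=\sum_e m_e$ along the closed boundary path. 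Your induction-on-cut-vertices alternative runs into exactly the same issue: the contributions do not ``recombine correctly'' under the unmodified definition, and noticing this is precisely what forces the corrected formula. None of this affects the paper, which only invokes the theorem for non-singular diagrams, but the blanket claim that the argument ``goes through unchanged'' in the singular case should be amended.
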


\subsection{A disc diagram from a cycle of standard trees}
\label{subsec:dd loop std trees}

Let $A_\Gamma$ be a two-dimensional Artin group.  Let $\{T_i\}_{i\in\Z_n}\subseteq D_\Gamma$ be a cycle of standard trees and, possibly after passing to a sub-cycle, we may assume that it is simple. For every $i\in\Z_n$, let $v_i$ be the vertex in which the standard trees $T_i$ and $T_{i+1}$ intersect and let $\sigma_i\subseteq D_\Gamma$ be the geodesic segment connecting $v_i$ with $v_{i+1}$. Because standard trees are convex subsets of $D_\Gamma$, $\sigma_i$ is contained in $T_{i+1}$. Denote $\sigma\coloneqq\bigcup_{i\in\Z_n}\sigma_i$, which is a connected subset. Because $D_\Gamma$ is contractible~\cite{charney1995k}, there exists a non-singular, non-degenerate disc diagram $D\to D_\Gamma$ that maps $\partial{D}$ isomorphically to $\sigma$~\cite{januszkiewicz2006simplicial}*{Lemma 1.6}. We call every such disc diagram a \emph{filling diagram} for $\sigma$. We may always assume that $D\to D_\Gamma$ is a \emph{minimal} filling diagram for $\sigma$, that is, it contains the minimum number of $2$-cells amongst the filling diagrams for $\sigma$. The minimality implies that $D\to D_\Gamma$ is reduced, for otherwise a standard argument on surging $2$-cells would produce a diagram with strictly fewer $2$-cells. \par 
By means of $D\to D_\Gamma$ we can talk about types of vertices of $D$, as in Definition~\ref{def:type of vertices}. \par

To exploit the combinatorial Gauss--Bonnet theorem in~$D$, we need to assign angles to the corners of~$D$. Let us fix a minimal filling diagram $F\colon D\to D_\Gamma$ for $\sigma$; for every corner $(v,P)$ of $D$, we define $\angle (v,P)\coloneqq\angle(F(v),F(P))$ (see Example~\ref{ex:moussongAngles}). The equality from Theorem~\ref{thm:comb gbt} becomes
\[
2\pi=\sum_{i=1}^n\kappa(v_i)+\sum_{u\in{D^{(0)}}}\kappa(u)+\sum_{P\in D^{(2)}}\kappa(P),
\]
where the $u$'s range amongst all the vertices of $D$ that are not one of the intersection vertices $v_i$'s. The choice to break down the sum of curvatures of vertices into two sums is made to emphasise that, in our settings, the intersection vertices $v_i$'s will be the ones that will possibly contribute with positive curvature. \par

\begin{rem}[vertices of type~$0$]\label{rem:verTyp0}
    Because distinct standard trees intersect in at most one vertex~\cite{hagen2024extra}*{Corollary 2.18}, $D$ contains at least a vertex $v_0$ of type~$0$, for otherwise $D$ would be contained in the $1$-skeleton of $D_\Gamma$, thus not homeomorphic to a $2$-disc.
\end{rem}

\begin{rem}
    In $D_\Gamma$ and $C_\Gamma$, the \emph{essential $1$-skeletons}, i.e. the subcomplexes spanned by the vertices of type~$1$ and~$2$, coincide. Because standard trees are contained in the essential $1$-skeleton, it is straightforward to check that the above construction extends likewise to cycles of standard trees in $C_\Gamma$ (in which case, the assignment of angles is the one described in Example~\ref{ex:cubicalAngles}). In each proof, we will freely choose the most convenient complex to work with.
\end{rem}

\subsection{Computation of curvatures}\label{subsec:compOfCurv} We end this section with an estimate of curvatures relative to the Moussong metric on $D_\Gamma$ and the cubical metric on $C_\Gamma$.

\begin{lem}
    \label{lem:reduced dgr -> immersion}
    Let $F\colon D\to X$ be a reduced disc diagram. For every $v\in \ver D$, the restriction $F\colon \link{D}{v}\to\link{X}{F(v)}$ is an immersion (i.e. is locally injective). In particular, 
    \[
    \girth{\link D v}\ge\girth{\link X {F(v)}}.
    \]
    Moreover, if $X=D_\Gamma$ is the Deligne complex of some two-dimensional Artin group and $F\colon D\to D_\Gamma$ is obtained with the construction described in Section~\ref{subsec:dd loop std trees}, then
    \[
    \wg{\link D v}\ge\wg{\link {D_\Gamma} {F(v)}}.
    \]
\end{lem}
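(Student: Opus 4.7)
The plan is to deduce both inequalities from the single geometric fact that the induced map $F_\ast \colon \link D v \to \link X {F(v)}$ is a graph immersion. First I would unpack what this link map is: vertices of $\link D v$ are $1$-cells of $D$ through $v$, edges are $2$-cells of $D$ through $v$, and incidence records boundary containment. To verify local injectivity, suppose two edges of $\link D v$ share a common vertex, i.e.\ two $2$-cells $P_1,P_2$ of $D$ share a $1$-cell $e\ni v$; if their images in $\link X{F(v)}$ were the same edge, then $F(P_1)$ and $F(P_2)$ would be the same $2$-cell of $X$ sharing the $1$-cell $F(e)$, contradicting the reduced hypothesis on $F$. Hence $F_\ast$ is an immersion.

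Next I would argue the standard fact that an immersion of simplicial graphs does not decrease girth. An embedded cycle $C$ of length $n$ in $\link D v$ maps to a closed walk of the same length in $\link X{F(v)}$, and this walk cannot backtrack, since a backtrack would produce two edges in $\link D v$ with a common vertex mapping to the same edge, which local injectivity forbids. A closed walk of length $n$ without backtracks in a simplicial graph must contain an embedded cycle of length at most $n$: if some vertex is visited twice, extract the inner subwalk between the two visits, whose length is at least $3$ by the simplicial assumption, and induct. Taking $n=\girth{\link D v}$ yields $\girth{\link D v}\ge\girth{\link X{F(v)}}$.

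For the weighted girth statement, I would use that when $F$ comes from the construction of Section~\ref{subsec:dd loop std trees} the diagram is non-degenerate, so each $2$-cell of $D$ maps homeomorphically onto a genuine $2$-simplex of $D_\Gamma$. This $2$-simplex corresponds to an edge of $\Gamma$, which carries a label, and by pullback the edges of $\link D v$ acquire labels that $F_\ast$ preserves by construction. Subdividing each edge of label at least $3$ on both sides yields a new graph map which is still an immersion (subdividing both source and target edges preserves local injectivity), so the girth inequality applied to these subdivided graphs is exactly $\wg{\link D v}\ge\wg{\link {D_\Gamma}{F(v)}}$.

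The only substantive step is the passage from ``reduced'' to ``local injectivity on links''; the remainder is routine. The main care point will be making sure the label inheritance on $\link D v$ is set up consistently with the labelling on $\link{D_\Gamma}{F(v)}$ so that subdivision is compatible with $F_\ast$; this is where the non-degeneracy coming from the disc-diagram construction plays its role.
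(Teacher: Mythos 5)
Your proposal is correct and follows essentially the same route as the paper: immersion of links from reducedness via the correspondence between edges of $\link D v$ and $2$-cells of $D$ through $v$, the girth bound from the fact that immersions send embedded cycles to non-backtracking closed walks, and the weighted version by pulling back types and labels along $F$. You merely spell out in more detail the extraction of an embedded cycle from a non-backtracking closed walk and phrase the label bookkeeping via edge subdivision, both of which are consistent with the paper's (more terse) argument.
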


\begin{proof}
    The fact that the restriction $F\colon\link D v\to\link X{F(v)}$ is an immersion follows from the fact that the disc diagram $D\to X$ is reduced together with the observation that edges of $\link D v$ correspond to $2$-cells of $D$ containing $v$: if two adjacent edges of $\link D v$ had the same image in $\link X {F(v)}$, then the corresponding two $2$-cells would get mapped to the same $2$-cell of $X$, contradicting the fact that $F$ is reduced. \par 
    To show that $F$ lowers the girth, it is sufficient to notice that non-backtracking paths in $\link D v$ get mapped to non-backtracking paths of $\link X {F((v)}$, because $F\colon D\to X$ is a reduced disc diagram.\par
    The ``moreover'' claim follows by combining the previous one and the fact that the type and label of the vertices of $D$ is the one induced by $F\colon D\to D_\Gamma$.
\end{proof}

Let $A_\Gamma$ be a two-dimensional Artin group and let $\{T_i\}_{i\in\Z_n}$ be a simple cycle of standard trees in $D_\Gamma$. Recall that we denote by $v_i$ the intersection vertex $T_i\cap T_{i+1}$ and by $\sigma$ the loop obtained by concatenating the segments $[v_i,v_{i+1}]$'s. 

\begin{lem}[computing curvatures]
    \label{lem:cpt curvatures}
    Let $F\colon D\to D_\Gamma$ be a minimal filling diagram for~$\sigma$. Every $2$-cell of $D$ has non-positive curvature. Moreover, for every vertex $v\in\ver D$:
    \begin{itemize}
        \item if $v$ is of type~$0$, then $v$ belongs to the interior of $D$ and there exists a cycle subgraph $\gamma\subseteq\Gamma$ such that
        \[
        \kappa(v)\le\pi(2-\girth\gamma)+\pi\sum_{e\in\edge\gamma}\frac{1}{m_e};
        \]
        \item if $v$ is either a vertex of type~$1$ or a vertex of type~$2$ that is not one of the intersection vertices $v_i$'s, then $\kappa(v)\le0$.
    \end{itemize}
    In particular, if $A_\Gamma$ is of hyperbolic type, then vertices of type~$0$ have strictly negative curvature. 
\end{lem}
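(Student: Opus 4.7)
The strategy is to pull back the Moussong metric through $F$: to each corner $(v,P)$ of $D$ we assign $\angle(v,P) = \angle(F(v), F(P))$, so that the sum of angles at a vertex $v$ equals the length of $\link D v$ viewed as a metric graph with edges weighted by these angles. Each $2$-cell of $D$ maps to a Euclidean triangle in $D_\Gamma$, whose corner angles sum to $\pi$, so $\kappa(P) = (2-3)\pi + \pi = 0$. For the vertex curvatures, Lemma~\ref{lem:reduced dgr -> immersion} provides a length-preserving immersion $\link D v \hookrightarrow \link{D_\Gamma}{F(v)}$; the target is a $\cat{1}$ space because $D_\Gamma$ is $\cat{0}$, so every embedded loop in it has length at least $2\pi$.

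For $v$ of type~$1$ or~$2$ with $v$ not among the intersection vertices $v_i$, I split into two subcases. If $v$ is interior, $\link D v$ is a combinatorial circle whose image is a non-backtracking closed walk in the $\cat{1}$ link; extracting an embedded sub-cycle and applying the girth lower bound $2\pi$ in the link forces this image to have length at least $2\pi$. If $v$ lies on the boundary, $F(v)$ is in the relative interior of some $\sigma_i$, which is a local geodesic of $D_\Gamma$ because it sits inside the convex subset $T_{i+1}$; the two tangent directions of $\sigma_i$ at $F(v)$ are then antipodal in $\link{D_\Gamma}{F(v)}$, and the image of the arc $\link D v$ joining them has length at least $\pi$. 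In either case $\kappa(v) \le 0$.

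For type-$0$ vertices, the boundary $\sigma \subseteq \bigcup_i T_i \subseteq D_\Gamma^{(1)}$ meets only type-$1$ and type-$2$ vertices, so every type-$0$ vertex of $D$ is interior. The link of a type-$0$ vertex of $D_\Gamma$ is canonically isomorphic to the edge-subdivision of $\Gamma$ (vertex set $\ver\Gamma \sqcup \edge\Gamma$), with each link-edge incident to $e \in \edge\Gamma$ having Moussong length $\pi/2 - \pi/(2 m_e)$. The immersed circle $\link D v$ projects through $F$ to a non-backtracking closed walk $w$ in $\Gamma$ of length $n$; since $\Gamma$ is simplicial, a standard shortening argument yields an embedded sub-cycle $\gamma \subseteq \Gamma$ of length $k \le n$. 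The length of the image walk in the link equals $n\pi - \pi\sum_{e\in w}\frac{1}{m_e}$, and using $\frac{1}{m_e} \le \frac{1}{2} \le 1$ on the $n-k$ extra traversals gives
\[
\kappa(v) = \pi(2-n) + \pi\sum_{e \in w}\frac{1}{m_e} \le \pi(2-k) + \pi\sum_{e \in \gamma}\frac{1}{m_e},
\]
which is the desired inequality. For the hyperbolic-type refinement I verify that every cycle $\gamma$ satisfies $\sum_{e \in \gamma} 1/m_e < |\gamma|-2$ strictly: triangles give $\frac{1}{p}+\frac{1}{q}+\frac{1}{r}<1$ by hypothesis; any square is not a $(2,2,2,2)$-square, so some $m_e \ge 3$ forces $\sum < 2$; and for $|\gamma| \ge 5$ the crude bound $\sum \le |\gamma|/2 < |\gamma|-2$ suffices.

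The main delicacy will be the type-$0$ analysis: identifying $\link{D_\Gamma}{F(v)}$ explicitly with the edge-subdivision of $\Gamma$, extracting an embedded sub-cycle from a self-intersecting non-backtracking closed walk (which relies on $\Gamma$ being simplicial), and correctly bookkeeping the $1/m_e$ terms. A minor sanity check is that for $F(v)$ of type~$1$ sitting at a leaf of $\Gamma$ the link of $F(v)$ is a tree, so no interior vertex of $D$ can map there; this is consistent with the remaining analysis covering only boundary vertices.
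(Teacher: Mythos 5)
Your proposal is correct and follows essentially the same route as the paper: assign pulled-back Moussong angles so Euclidean triangles contribute zero curvature, use the reducedness/immersion of links to transfer length bounds from $\link{D_\Gamma}{F(v)}$, identify the link of a type-$0$ vertex with the subdivision of $\Gamma$ and compute edge lengths $\pi-\pi/m_e$, and check strictness under the hyperbolic-type condition on triangles and squares. The only (harmless) differences are cosmetic: you justify the girth bound $2\pi$ for type-$1$/type-$2$ links via the \cat 1 link condition where the paper cites Appel's lemma and the explicit join structure, and you derive the boundary bound $\ge\pi$ from geodesicity of $\sigma_i$ inside the convex tree $T_{i+1}$ where the paper cites Martin--Przytycki.
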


\begin{proof}
    Because $2$-simplices of $D_\Gamma$ are euclidean triangles, every $2$-cell of $D$ has non-positive curvature.\par
    Let $v$ be a vertex in the interior of $D$.  Since we are using the angular metric in $\link D v$, the quantity $\sum_{(v,P)\in\corner v}\angle(v,P)$ (i.e. the curvature of $v$) corresponds to the length of a loop in $\link D v$, say $\lambda$. By Lemma~\ref{lem:reduced dgr -> immersion}, $F(\lambda)$ is a loop (not necessarily simple) in $\link {D_\Gamma} {F(v)}$ of shorter length. Therefore $\kappa(v)\le\kappa(F(v))$. Let us then find an upper-bound for $\kappa(F(v))$ or, in other words, let us find a lower-bound for the length of a loop in $\link {D_\Gamma} {F(v)}$.\par

    First we consider the case $v$ lies in the interior of $D$.
    If $v$ is of type~$2$, then $\link{D_\Gamma}{F(v)}$ has girth $2\pi$~\cite{appel1983artin}*{Lemma~6}.
    If $v$ is of type~$1$, then $\link{D_\Gamma}{F(v)}$ is isomorphic to the join $\Z*\{1,\dots,k\}$ of two edgeless graphs, every edge being of length $\frac\pi2$, in which case it has girth $2\pi$ (unless $k=1$, in which case it is a tree).  
    If $v$ is of type~$0$, then we can identify $\link {D_\Gamma} {F(v)}$ with (the barycentric subdivision of) $\Gamma$ and the length of a cycle subgraph $\gamma\subseteq\Gamma$ is 
    \[
    \operatorname{len}_{\Gamma}(\gamma)=\sum_{e\in\edge\gamma}\frac{m_e-1}{m_e}\pi=\pi\girth\gamma-\sum_{e\in\edge\gamma}\frac{\pi}{m_e}.
    \]
    Note that the equality $\operatorname{len}_{\Gamma}(\gamma)=2\pi$ occurs only if $\gamma$ is a $(p,q,r)$-triangle with $\frac1p+\frac1q+\frac1r=1$ or $\gamma$ is the $(2,2,2,2)$-square. In particular, when $A_\Gamma$ is of hyperbolic type, $\operatorname{len}_{\Gamma}(\gamma)>2\pi$.\par
    Let now $v$ be a boundary vertex that is not one of the $v_i$'s.  Because $D$ is a filling diagram for $\sigma$ and standard trees do not contain vertices of type~$0$, $v$ can only be of type~$1$ or~$2$. In this case, the quantity $\sum_{(v,P)\in\corner v}\angle(v,P)$ corresponds to the length of a path in $\link D v$ between (the two vertices corresponding to) the two edges of the standard tree in which~$v$ lies that are incident to~$v$. Such path has length at least~$\pi$~\cite{martin2022acylindrical}*{Corollary 4.2}.
\end{proof}

The counterpart for the cubical Deligne complex with the cubical metric works likewise.

\begin{lem}[computing cubical curvatures]
    \label{lem:cpt cube curvatures}
    Let us assume that $\girth\Gamma\ge4$ and let $F\colon D\to C_\Gamma$ be a minimal filling diagram for~$\sigma$. Every $2$-cell of $D$ has non-positive curvature. Moreover, for every vertex $v\in\ver D$:
    \begin{itemize}
        \item if $v$ is of type~$0$, then $v$ belongs to the interior of $D$ and there exists a cycle subgraph $\gamma\subseteq\Gamma$ such that
        \[
        \kappa(v)\le(4-{\girth\gamma})\frac \pi 2;
        \]
        \item if $v$ is either a vertex of type~$1$ or a vertex of type~$2$ that is not one of the intersection vertices $v_i$'s, then $\kappa(v)\le0$.
    \end{itemize}
\end{lem}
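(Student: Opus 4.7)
The plan is to mimic the proof of Lemma~\ref{lem:cpt curvatures}, replacing the Moussong angle assignment by the cubical one of Example~\ref{ex:cubicalAngles}. Since every $2$-cell of $D$ is a unit Euclidean square with four right angles,
\[
\kappa(P) = (2-4)\pi + 4\cdot\frac{\pi}{2} = 0,
\]
which already settles the statement on $2$-cells. For the vertex curvatures I would again invoke Lemma~\ref{lem:reduced dgr -> immersion} to obtain a length-preserving combinatorial immersion $F\colon\link D v\to\link{C_\Gamma}{F(v)}$, then analyse the link of $F(v)$ case by case.

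When $v$ is of type~$0$, it must lie in the interior of $D$ because $\sigma$ is contained in the union of standard trees, which live in the essential $1$-skeleton. The link of a type-$0$ vertex of $C_\Gamma$ is canonically identified with $\Gamma$ (a graph since $\Gamma$ is triangle-free by $\girth\Gamma\ge 4$), with every edge of length $\pi/2$. The loop around $v$ therefore maps to a non-backtracking closed walk in $\Gamma$, which traverses some cycle subgraph $\gamma\subseteq\Gamma$; its length is at least $\girth\gamma\cdot\pi/2$, giving the bound $\kappa(v)\le(4-\girth\gamma)\frac{\pi}{2}$.

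When $v$ is of type~$1$ or type~$2$ and $v\ne v_i$, I would use that $C_\Gamma$ is a \cat{0} cube complex (here the hypothesis $\girth\Gamma\ge 4$ is essential), so every vertex link is flag with girth at least $2\pi$; this settles the interior case immediately. For a boundary $v$, the two edges of $\sigma$ at $v$ lie in a common standard tree because $v$ is not an intersection vertex. The key technical step is to check that these two edges correspond to link vertices at distance at least $\pi$: through a type-$1$ vertex of $C_\Gamma$ a standard tree meets the star link only at its leaves (the ``up'' vertices), while through a type-$2$ vertex of $C_\Gamma$ it meets the bipartite link in a single colour class (using that conjugates of $\Span{s}$ and $\Span{t}$ intersect trivially in the dihedral Artin group $A_{s,t}$); in both cases any connecting path in the link uses at least two consecutive edges of length $\pi/2$, and $\kappa(v)\le\pi-\pi=0$ follows. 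I expect that pinning down the local combinatorics of the vertex links of $C_\Gamma$, together with the way standard trees sit inside them, will be the only non-routine ingredient of the proof.
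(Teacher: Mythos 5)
Your proposal is correct and follows essentially the same route as the paper: zero curvature for the square $2$-cells, link-girth bounds for interior vertices (the paper spells out the join and bipartite links rather than invoking the \cat 0 link condition), and the distance-at-least-$\pi$ estimate for the two tree directions at a boundary vertex, which the paper simply cites from Martin--Przytycki. One small imprecision: at a type-$2$ vertex of odd label the two edges of a standard tree need not lie in a single colour class of the bipartite link (the two generators are conjugate there), but the point you actually need --- that they are never the two endpoints of a single link edge, since $g'\Span{s}g'^{-1}\cap g'\Span{t}g'^{-1}$ is trivial for a fixed $g'$ --- is exactly the parenthetical you give, and it yields the required bound $\kappa(v)\le\pi-\pi=0$.
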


\begin{proof}
    The proof follows the lines of the one of Lemma~\ref{lem:cpt curvatures}. Note that $\girth\Gamma\ge4$ ensures that $2$-cells of $D$ have non-positive curvature.\medskip

    Let $v$ be a vertex in the interior of $D$. Using the same notation used in the previous proof, we shall find a lower-bound for the girth of $\link{C_\Gamma}{F(v)}$. If $v$ is of type~$2$, then $\link{C_\Gamma}{F(v)}$ has girth at least $2\pi$~\cite{appel1983artin}*{Lemma~6}. If $v$ is of type~$1$, then $\link {C_{\Gamma}}{F(v)}$ is a join $\Z*\{1,\dots,k\}$, where each edge has length $\frac\pi2$. If $v$ is of type~$2$, then $\link{C_\Gamma}{F(v)}$ can be identified with the barycentric subdivision of $\Gamma$ and a loop $\gamma\subseteq\Gamma$ has length
    \[
    \operatorname{len}_\Gamma(\gamma)=\girth\gamma\cdot\frac\pi2.
    \]
    Because $\girth\Gamma\ge4$, $\operatorname{len}_{\Gamma}(\gamma)\ge2\pi$.\medskip

    If $v$ is a vertex in the boundary of $D$ that is not one of the $v_i$'s, then $v$ is necessarily of type~$1$ or~$2$. Again, a path in $\link{C_\Gamma}{F(v)}$ between two edges of the same standard tree has length at least~$\pi$~\cite{martin2022acylindrical}*{Corollary 4.2}.
\end{proof}

\section{Length and weighted girth of a cycle of standard trees}
\label{sec:lenAndWgCycStdTrs}

We recall the definition of girth of a graph and introduce a new graph invariant, the weighted girth. We then introduce two invariants for cycles of standard trees, namely the length and the weighted girth, and show that they admit a lower bound that only depends on the defining graph. We do so in Lemma~\ref{lem:lenCycStdTrs} (for the length) and Lemma~\ref{lem:wgCycStdTrs} (for the weighted girth).

\begin{rem}[tree defining graph]
    \label{rem:treeDefGrph}
    If $\Gamma$ is a tree, then there are no cycles of standard trees in~$D_\Gamma$. To see this, assume that a cycle of standard trees exists and construct a filling diagram $F\colon D\to D_\Gamma$. By Remark~\ref{rem:verTyp0}, $D$ contains a type-zero vertex $u$ in its interior. Because $D$ is homeomorphic to a disc, $\link D u$ is topologically a circle. By Lemma~\ref{lem:reduced dgr -> immersion}, the image of $\link D u$ under $F$ is a (possibly non simple) loop in $\link{D_\Gamma}{F(u)}$, while the latter is isomorphic to $\Gamma$, a contradiction.\par
    In particular, the statements of Lemma~\ref{lem:lenCycStdTrs} and Lemma~\ref{lem:wgCycStdTrs} vacuously hold true when $\Gamma$ is a tree.
\end{rem}

\subsection{The girth case}\label{subsec:girth} Recall that the girth of a simplicial graph is the length of a shortest cycle, with the convention that trees have girth~$\infty$.

\begin{example}
    \label{ex:canonCycStdTrees}
    The parabolic subgroups of $A_\Gamma$ that correspond to cycle subgraphs define cycles of standard trees in a canonical way. Let $\gamma\subseteq\Gamma$ be a simple cycle subgraph. Every $v\in\ver\gamma$ acts tree-elliptically on $D_\Gamma$ and the set $\{\fix v\}_{v\in\ver\gamma}$ is a simple cycle of $\girth\gamma$-many standard trees. Moreover, such standard trees bound the subcomplex $K_\gamma$ of the standard fundamental domain $K_\Gamma$.
\end{example}

\begin{lem}[length of cycle of standard trees]
    \label{lem:lenCycStdTrs}
    Let $A_\Gamma$ be a two-dimensional Artin group and let $\{T_i\}_{i\in\Z_k}$ be a cycle of standard trees of the Deligne complex. Then $k\ge\girth\Gamma$.\par
    Moreover, if $n\coloneqq\girth\Gamma\ge5$, then, for every cycle of $n$ standard trees $\{T_i\}_{i\in\Z_n}$, there exists a cycle subgraph $\gamma\subseteq\Gamma$ such that $\girth\gamma=n$ and the $T_i$'s bound an $A_\Gamma$-translate of $K_\gamma$.
\end{lem}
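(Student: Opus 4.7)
The argument splits on $\girth\Gamma$. When $\girth\Gamma=3$, the bound $k\ge\girth\Gamma=3$ is immediate from Definition~\ref{def:cycStdTree}. So I focus on $\girth\Gamma\ge 4$, in which case $\Gamma$ is triangle free and the cubical Deligne complex $C_\Gamma$ is \cat 0. By Lemma~\ref{lem:simSubCyc}, passing to a simple sub-cycle can only decrease $k$, so it suffices to handle the case that $\L$ is simple.

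I build a minimal filling disc diagram $F\colon D\to C_\Gamma$ for the boundary loop $\sigma=\bigcup_{i}[v_i,v_{i+1}]$, as in Section~\ref{subsec:dd loop std trees}, and equip corners with the cubical angles of Example~\ref{ex:cubicalAngles}. By Lemma~\ref{lem:cpt cube curvatures}: each $2$-cube has zero curvature; each interior type-$0$ vertex $w$ satisfies $\kappa(w)\le(4-\girth\gamma_w)\pi/2\le(4-\girth\Gamma)\pi/2$ (using $\girth\gamma_w\ge\girth\Gamma$); and every interior vertex of type~$1$ or~$2$, as well as every boundary vertex distinct from the $v_i$'s, has non-positive curvature. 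At each $v_i$, the link of $F(v_i)$ in $C_\Gamma$ has girth $\ge 2\pi$ with link-edges of length $\pi/2$, so the non-trivial arc in $\link{D}{v_i}$ connecting the two boundary edges at $v_i$ (non-trivial because $v_i$ borders at least one $2$-cube of $D$) has length $\ge\pi/2$; hence $\kappa(v_i)\le\pi/2$. Combining these estimates with Remark~\ref{rem:verTyp0}, which guarantees that $D$ contains at least one interior type-$0$ vertex, the combinatorial Gauss--Bonnet theorem (Theorem~\ref{thm:comb gbt}) yields
\[
2\pi\le k\cdot\frac{\pi}{2}+(4-\girth\Gamma)\cdot\frac{\pi}{2},
\]
and rearranging gives $k\ge\girth\Gamma$.

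For the ``moreover'' part with $n\coloneqq\girth\Gamma\ge 5$ and $k=n$, every inequality above must saturate. Since $(4-\girth\Gamma)\ne 0$, this forces exactly one interior type-$0$ vertex $w_0$; the immersion $\link{D}{w_0}\to\Gamma$ must be injective onto a simple cycle $\gamma\subseteq\Gamma$ of length $n$ (so $\girth\gamma=n$); each $v_i$ is contained in exactly one $2$-cube of $D$; and every remaining vertex of $D$ carries zero curvature. Tracing through these constraints, the combinatorial type of $D$ is pinned down to a single ring of $n$ squares around $w_0$, which is exactly the cubical canonical fundamental domain $K_\gamma$. The minimality and reducedness of $F$, together with the saturated angle data, then force $F$ to embed $D$ isomorphically onto an $A_\Gamma$-translate of $K_\gamma$; in particular the $T_i$'s are identified with the standard trees $\fix{v}$ for $v\in\ver\gamma$, as in Example~\ref{ex:canonCycStdTrees}.

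The main technical obstacle is the rigidity step in the ``moreover'' part: one must check that the saturated Gauss--Bonnet data genuinely prevents any identifications of vertices or cells under $F$, so that the abstract combinatorial description of $D$ does coincide with $K_\gamma$. This relies on the bipartite structure of the cubical link at each intersection vertex $v_i$ together with a careful combinatorial inspection of how the $n$ prescribed squares must fit together around $w_0$.
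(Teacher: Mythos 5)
Your argument is correct and follows essentially the same route as the paper: reduce to a simple cycle, dispose of the girth-$3$ case by Definition~\ref{def:cycStdTree}, fill a minimal disc diagram in the cubical Deligne complex, and apply the combinatorial Gauss--Bonnet theorem with the curvature bounds of Lemma~\ref{lem:cpt cube curvatures} to get $2\pi\le k\frac\pi2+(4-\girth\Gamma)\frac\pi2$, then saturate all inequalities for the ``moreover'' part to isolate a unique type-$0$ vertex whose link realises the cycle $\gamma$. The paper's treatment of the final rigidity step is exactly as terse as yours (it simply concludes that the $T_i$'s bound a loop in $\link{C_\Gamma}{v_0}\cong\Gamma$), so no discrepancy there.
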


\begin{proof}
    Up to passing to a sub-cycle with fewer trees, it is not restrictive to assume that $\{T_i\}_{i\in\Z_k}$ is simple. By definition, every cycle of standard trees consists of at least three trees, thus we can assume that $\girth\Gamma\ge4$ and work with the cubical Deligne complex $C_\Gamma$. Let us consider a minimal filling disc diagram $D$ as constructed in Section~\ref{sec:comb gbt}. For $i\in\Z_k$, let $v_i$ be the intersection vertex $T_{i-1}\cap T_i$ and let $v_0$ be a type-zero vertex, which necessarily exists and belongs to the interior of $D$ (see Remark~\ref{rem:verTyp0}). The combinatorial Gauss--Bonnet theorem, together with the fact that the remaining vertices contribute with non-positive curvature, gives
    \[
    2\pi\le\sum_{i=1}^k\kappa(v_i)+\kappa(v_0)\le k\frac \pi 2+(4-\girth\gamma)\frac\pi2,
    \]
    where $\gamma$ is a cycle subgraph of $\Gamma$ (we are invoking Lemma~\ref{lem:cpt cube curvatures}). The claim follows by noticing that $\girth\Gamma$ is the minimum girth of a cycle subgraph of $\Gamma$.\medskip

    The proof of the second statement follows by a more careful examination of the curvatures. By the first part of the lemma (together with Example~\ref{ex:canonCycStdTrees}), $n$ is minimal, implying that the cycle $\{T_i\}_{i\in\Z_n}$ is simple. The combinatorial Gauss--Bonnet theorem gives
    \[
    2\pi\le\sum_{i=1}^n\kappa(v_i)+\kappa(v_0)\le\sum_{i=1}^n\kappa(v_i)+(4-\girth\gamma)\frac\pi2,
    \]
    for some cycle subgraph $\gamma\subseteq\Gamma$. If there were $j\in\{1,\dots,n\}$ such that $\kappa(v_j)<\frac\pi2$, then we would obtain 
    \[
    2\pi<n\frac\pi2+(4-\girth\gamma)\frac\pi2\le n\frac\pi2+(4-n)\frac\pi2=2\pi,
    \]
    a contradiction. In particular, $\sum_{i=1}^n\kappa(v_i)=n\frac\pi2$. If we had $\girth\gamma>n$, then
    \[
    2\pi<n\frac\pi2+2\pi-n\frac\pi2=2\pi,
    \]
    another contradiction. Therefore $\girth\gamma=n$ and each inequality is in fact an equality. Because $\girth\Gamma\ge5$ implies that $A_\Gamma$ is of hyperbolic type, vertices of type~$0$ have strictly negative curvature. It follows that $v_0$ is the only vertex of $D$ of type~$0$ and the trees $T_i$'s bound a loop in $\link{C_\Gamma}{v_0}\cong \Gamma$.
\end{proof}

\subsection{The weighted girth case} We introduce the weighted girth of a labelled graph.

\begin{defi}[weighted girth]\label{def:wg}
    Let $\Gamma$ be a labelled graph:
\begin{enumerate}
    \item for every simple cycle subgraph $\gamma\subseteq\Gamma$, the \emph{weighted girth} of $\gamma$ is the quantity
    \[
    \wg\gamma=\girth\gamma+\abs{\{\{a,b\}\in\edge\gamma:m_{ab}\ge3\}};
    \]
    \item the \emph{weighted girth} of $\Gamma$ is defined as the minimum
    \[
    \wg\Gamma=\min\{\wg\gamma:\text{$\gamma\subseteq\Gamma$ cycle subgraph}\}.
    \]
\end{enumerate}
    If $\Gamma$ is a tree, then we set $\wg\Gamma=\infty$.
\end{defi}

Equivalently, $\wg\Gamma$ can be defined as the girth of the graph that is obtained by subdividing every edge of $\Gamma$ with label at least~$3$. \par 

Likewise we define the weighted girth of a cycle of standard trees.

\begin{defi}
    \label{def:wgCycStdTrs}
    Let $A_\Gamma$ be a two-dimensional Artin group and let $\mathscr L=\{T_i\}_{i\in\Z_k}$ be a cycle of standard trees of $D_\Gamma$; the \emph{weighted girth} of $\mathscr L$ is
    \[
        \wg{\mathscr L}=k+\abs{\{i\in\Z_k:\lab{T_i\cap T_{i+1}}\ge3\}}.
    \]
\end{defi}

We saw in Example~\ref{ex:canonCycStdTrees} that cycle subgraphs of the defining graph $\Gamma$ determine cycles of standard trees of the same length in the Deligne complex. The correspondence extends to the weighted girths. 

\begin{example}
    \label{ex:canonCycStdTreesWG}
    Let $\gamma\subseteq\Gamma$ be a simple cycle subgraph. The cycle of standard trees $\{\fix a\}_{a\in\ver\gamma}$ bounding the subcomplex $K_\gamma$ has weighted girth $\wg\gamma$. Indeed, it has length $\girth\gamma$ and intersection vertices of label at least~$3$ correspond to edges of $\gamma$ of label at least~$3$.
\end{example}

Our goal is to show the following lower bound:

\begin{lem}
    \label{lem:wgCycStdTrs}
    Let $A_\Gamma$ be a two-dimensional Artin group.\medskip
    
    \emph{Triangle-free case}. Assume that $\girth\Gamma\ge4$. For every cycle of standard trees $\mathscr L\subseteq D_\Gamma$, $\wg{\mathscr L}\ge\wg\Gamma$.\par 
    Moreover, if $\girth\Gamma\ge5$, then, for every cycle of standard trees $\mathscr L=\{T_i\}_{i\in\Z_k}$ with $\wg{\mathscr L}=\wg\Gamma$, there exists a cycle subgraph $\gamma\subseteq\Gamma$ such that $\wg\gamma=\wg\Gamma$ and the $T_i$'s bound an $A_\Gamma$-translate of $K_\gamma$.\medskip
    
    \emph{Hyperbolic-type case}. Assume that $A_\Gamma$ is of hyperbolic type. For every cycle of standard trees $\mathscr L\subseteq D_\Gamma$, $\wg{\mathscr L}\ge\wg\Gamma$.
\end{lem}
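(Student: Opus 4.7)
My plan is to adapt the strategy of Lemma~\ref{lem:lenCycStdTrs}. After reducing to a simple (sub-)cycle via Lemma~\ref{lem:simSubCyc}, I would produce a minimal filling disc diagram $F\colon D\to D_\Gamma$ (working with $C_\Gamma$ in the triangle-free case) and apply the combinatorial Gauss--Bonnet theorem to extract the inequality $\wg{\mathscr L}\ge\wg\Gamma$. The new ingredient compared to Lemma~\ref{lem:lenCycStdTrs} is a label-sensitive refinement of the curvature bounds: I would aim for
\begin{itemize}
    \item at an interior type-$0$ vertex $v_0$, a bound $\kappa(v_0)\le 2\pi-\wg\gamma\cdot\pi/2$, where $\gamma\subseteq\Gamma$ is the cycle subgraph traced by the link of $v_0$;
    \item at a type-$2$ intersection vertex $v_i$, a bound $\kappa(v_i)\le\pi/2$ when $\lab{v_i}=2$ and $\kappa(v_i)\le\pi$ when $\lab{v_i}\ge 3$;
    \item non-positive curvature at all other vertices and at every $2$-cell.
\end{itemize}

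Denoting by $k_2,k_3$ the numbers of intersection vertices of $\mathscr L$ of label $2$ and of label $\ge 3$ respectively, Gauss--Bonnet would then yield
\[
2\pi \le \kappa(v_0)+\sum_i\kappa(v_i) \le 2\pi-\wg\gamma\cdot\pi/2 + (k_2+2k_3)\pi/2 = 2\pi + (\wg{\mathscr L}-\wg\gamma)\pi/2,
\]
so $\wg{\mathscr L}\ge\wg\gamma\ge\wg\Gamma$.

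The principal obstacle, and what I expect to be the main technical difficulty, is establishing the refined bound at $v_0$. The Moussong angles of Lemma~\ref{lem:cpt curvatures} give a link contribution of only $\pi-\pi/m_e$ per edge of $\Gamma$ of label $m_e$, which falls short of the desired $\pi$ per label-$\ge 3$ edge. I plan to address this by subdividing the $2$-cells of $D$ whose images correspond to edges of $\Gamma$ of label $\ge 3$, and assigning angles at the new corners so that the link of any interior type-$0$ vertex picks up the full weighted length. At the same time, the subdivision must be designed so that interior type-$2$ vertices of label $\ge 3$ do not acquire positive curvature, and this is where the two hypotheses enter differently. In the triangle-free case, I would exploit the CAT$(0)$ cubical structure of $C_\Gamma$ (whose links are sufficiently well-behaved at type-$2$ vertices, by the analogue of Lemma~\ref{lem:cpt cube curvatures}) to make the subdivision compatible with the label structure. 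In the hyperbolic-type case, the CAT$(-1)$ structure on $D_\Gamma$ endows every hyperbolic $2$-cell with strictly negative curvature, providing uniform slack that absorbs the discrepancy and allows the refined bounds to be obtained from a modified Moussong angle assignment on the subdivided diagram.

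For the moreover part in the triangle-free case with $\girth\Gamma\ge 5$, I would analyse the equality case in the spirit of Lemma~\ref{lem:lenCycStdTrs}: if $\wg{\mathscr L}=\wg\Gamma$, every intermediate inequality becomes an equality, which forces $D$ to contain a unique interior type-$0$ vertex whose link realises a cycle $\gamma$ with $\wg\gamma=\wg\Gamma$, and in turn forces $\{T_i\}_{i\in\Z_k}$ to bound an $A_\Gamma$-translate of $K_\gamma$.
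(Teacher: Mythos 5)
Your accounting is the right one---the target inequality $2\pi\le \kappa(v_0)+\sum_i\kappa(v_i)$ with $\kappa(v_0)\le 2\pi-\wg\gamma\cdot\frac\pi2$ and $(k_2+2k_3)\frac\pi2=\wg{\mathscr L}\frac\pi2$ is exactly what the paper extracts---but the step you yourself flag as ``the main technical difficulty'' is the entire content of the lemma, and your sketch of how to overcome it does not work as stated. If you enlarge the type-$0$ corner of a $2$-cell corresponding to a label-$\ge3$ edge so that this edge contributes $\pi$ to the link of $v_0$, the extra angle must be taken from another corner of the same cell to keep that cell non-positively curved; taking it from the type-$2$ corner drives the total angle around an interior type-$2$ vertex of label $m\ge3$ towards $0$ and makes that vertex positively curved, while taking it from the type-$1$ corners does the same at interior type-$1$ vertices, whose links have combinatorial girth only $4$. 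No purely local angle reassignment resolves this tension. What the paper actually does in the triangle-free case is: work in $C_\Gamma$ with all angles $\frac\pi2$, replace the star of each type-$0$ vertex by a single polygon, truncate the label-$\ge3$ intersection vertices on the boundary (so each contributes two corners of curvature $\frac\pi2$, matching your $k_2+2k_3$ count), and then perform an explicit \emph{redistribution of curvature}: each type-$2$ vertex of label $\ge3$ on the distinguished polygon $\overline P_0$ carries curvature at most $-\frac\pi2$ (Claim~\ref{claim:curBndBy-pi/2}), and transferring $\frac\pi2$ from $\overline P_0$ to each such vertex is precisely what upgrades $\kappa(\overline P_0)\le 2\pi-\girth\gamma\cdot\frac\pi2$ to $\kappa'(\overline P_0)\le2\pi-\wg\gamma\cdot\frac\pi2$. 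This global bookkeeping---including the fact that each heavy vertex can pay for only one polygon, which is why the redistribution is performed for a single $2$-cell---is the missing idea in your proposal.

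The hyperbolic-type case is a second genuine gap: ``the $\cat{-1}$ structure provides uniform slack'' is not an argument. In the Moussong metric the $2$-simplices are Euclidean, and passing to a $\cat{-1}$ metric would force you to redo every link-length estimate with no quantitative control over the angle deficit of each triangle. The paper takes a different route entirely: once $\Gamma$ contains a triangle, two-dimensionality and hyperbolic type force $\wg\Gamma\in\{5,6\}$, so it suffices to rule out cycles of standard trees of weighted girth at most $5$, which is done by a finite case analysis of the possible configurations (Lemma~\ref{lem:no C4 in (3,5)} and Lemma~\ref{lem:no C5 in (3,6)}) using the unmodified Moussong angles. A smaller point: reducing to a simple sub-cycle requires checking that the sub-cycle's weighted girth does not exceed that of $\mathscr L$---the sub-cycle acquires a new intersection vertex whose label must be accounted for (this is Lemma~\ref{lem:wg simp < wg})---and this does not follow from Lemma~\ref{lem:simSubCyc} alone.
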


\subsubsection{Triangle-free case} We first deal with the case in which the defining graph is triangle free, where we can rely on the cubical Deligne complex being a \cat 0 cube complex. 
The core of the proof is to understand \emph{simple} cycles of standard trees:

\begin{lem}
    \label{lem:wgSimCycStdTrs}
    Let $A_\Gamma$ be a two-dimen\-sion\-al Artin group with $\girth\Gamma\ge4$. For every simple cycle of standard trees $\mathscr L\subseteq C_\Gamma$, $\wg{\mathscr L}\ge\wg\Gamma$.
\end{lem}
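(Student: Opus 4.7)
The plan is to adapt the strategy of Lemma~\ref{lem:lenCycStdTrs} so as to track labels. I would take a minimal filling disc diagram $F\colon D \to C_\Gamma$ for the boundary loop $\sigma = \bigcup_i \sigma_i$ associated to the simple cycle $\mathscr L$ (using the construction of Section~\ref{subsec:dd loop std trees}), and apply the combinatorial Gauss-Bonnet theorem with the cubical angle assignment of Example~\ref{ex:cubicalAngles}. By Remark~\ref{rem:treeDefGrph} we may assume $\Gamma$ contains at least one cycle, so that $\wg\Gamma$ is finite.

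By Lemma~\ref{lem:cpt cube curvatures}, $2$-cells and non-intersection, non-type-$0$ vertices contribute at most zero curvature, while each interior type-$0$ vertex $u$ contributes at most $(4-\girth{\gamma_u})\pi/2$ for some cycle subgraph $\gamma_u\subseteq\Gamma$ (obtained from the image in $\link{C_\Gamma}{F(u)}\cong\Gamma$ of the boundary loop of $\link{D}{u}$). Combined with the naive estimate $\kappa(v_i)\le\pi/2$ at each intersection vertex, Gauss-Bonnet already recovers the girth bound $k\ge\girth\Gamma$ as in Lemma~\ref{lem:lenCycStdTrs}, but not the desired weighted-girth refinement.

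The key additional input is a refined contribution from intersection vertices of label $\ge 3$. I would argue that, for each $v_i$ with $\lab{v_i}\ge 3$, one of two things must happen: either the filling diagram contains at least two $2$-cubes at $v_i$ (yielding the improvement $\kappa(v_i)\le 0$), or else the cycle subgraph $\gamma_u$ extracted from an adjacent interior type-$0$ vertex $u$ must traverse the edge of $\Gamma$ of label $\lab{v_i}$, thereby contributing to $n_3(\gamma_u):=|\{e\in\edge{\gamma_u}:m_e\ge 3\}|$. In either scenario, each index $i\in J:=\{i:\lab{v_i}\ge 3\}$ can be ``charged'' either to an extra $\pi/2$ of negative curvature or to an extra label-$\ge 3$ edge in some $\gamma_u$. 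Pushing this bookkeeping through Gauss-Bonnet should yield
\[
    k+|J|\;\ge\;\girth{\gamma_u}+n_3(\gamma_u)\;=\;\wg{\gamma_u}\;\ge\;\wg\Gamma,
\]
which is exactly the statement.

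The main obstacle lies in making the refined estimate at label-$\ge 3$ intersection vertices rigorous. Since the cubical angles do not depend on labels, the improvement cannot come from a purely local computation in $\link{C_\Gamma}{v_i}$: one needs a non-local argument exploiting the non-abelian nature of dihedral parabolic subgroups of label $\ge 3$ (contrast with the abelian case handled by Theorem~\ref{thm:classify Z^2 subgrp}) together with the minimality and reducedness of $F\colon D\to C_\Gamma$. Setting up the combinatorial correspondence that associates each label-$\ge 3$ intersection vertex either to an additional $2$-cube at $v_i$ in $D$ or to an additional label-$\ge 3$ edge of some extracted $\gamma_u$ is the technical heart of the proof.
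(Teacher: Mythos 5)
Your setup (minimal filling diagram, cubical angles, Lemma~\ref{lem:cpt cube curvatures}) matches the paper's, and you correctly diagnose that the naive Gauss--Bonnet count only recovers $k\ge\girth\Gamma$. But the charging scheme you propose for the refinement has a genuine gap, and it is not merely a matter of "making it rigorous": the charge runs in the wrong direction. You assign each label-$\ge3$ intersection vertex $v_i$ either to extra negative curvature or to a label-$\ge3$ edge of some $\gamma_u$; even if this were established injectively, it yields $k\ge\girth{\gamma_u}+|J_1|$ and $|J_2|\le n_3(\gamma_u)$, from which $k+|J|\ge\girth{\gamma_u}+n_3(\gamma_u)$ does not follow. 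What is actually needed is the converse accounting: every label-$\ge3$ edge of $\gamma_u$ must be paid for, including those whose corresponding type-$2$ vertices of $D$ are \emph{not} intersection vertices. Concretely, if all the $v_i$ have label~$2$ but the cycle $\gamma_u$ read off in $\link{C_\Gamma}{F(u)}\cong\Gamma$ has several edges of label $\ge3$ (met only at interior or non-intersection boundary vertices of $D$), your bookkeeping gives nothing beyond the girth bound, while the statement demands $k\ge\girth{\gamma_u}+n_3(\gamma_u)$.

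The missing ingredient is that type-$2$ vertices of label $m\ge3$ that are not intersection vertices carry curvature at most $-\frac\pi2$: their links in $C_\Gamma$ have combinatorial girth $2m$ (Appel--Schupp) and boundary paths between edges of the same standard tree have length at least $m$ (Martin--Przytycki), and reducedness transfers these bounds to $D$ via Lemma~\ref{lem:reduced dgr -> immersion}. So, contrary to your remark, the improvement \emph{is} local in the links — the cubical angles ignore labels, but the link combinatorics does not — and no appeal to non-abelianness of dihedral parabolics is needed. The paper then organises the count by two surgeries on the diagram (replacing stars of type-$0$ vertices by single polygons, and truncating each label-$\ge3$ intersection vertex into a pair of auxiliary vertices, which makes the positive-curvature budget exactly $\frac\pi2\wg{\mathscr L}$) followed by a redistribution of $\frac\pi2$ from each non-truncated label-$\ge3$ type-$2$ vertex of a chosen $2$-cell $\overline P_0$ into that cell, which forces $\kappa'(\overline P_0)\le2\pi-\frac\pi2\wg\Gamma$. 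You would need to supply an equivalent mechanism for these "invisible" label-$\ge3$ edges before your outline can close.
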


\begin{proof}
    Let $\L\subseteq C_\Gamma$ be a simple cycle of standard trees. For $i\in\Z_k$, let $v_i$ be the intersection vertex $T_{i-1}\cap T_i$ and let us consider a filling diagram $f\colon D\to C_\Gamma$ for~$\mathscr L$. \par 
    Note that, because all $2$-cells are squares, which have zero-curvature, the curvature is all concentrated on the vertices. In order to simplify the computations, we are going to  construct a new disc diagram $F$ starting from $D$. First of all, for every type-$0$ vertex~$u$, we remove~$u$ and replace $\operatorname{Star}_D(u)$ with a single polygon~$P_u$, attached on $\link D u$. This surgery is well defined, since standard trees are contained in the subcomplex of~$C_\Gamma$ spanned by the vertices of type~$1$ and~$2$. See Figure~\ref{fig:polygonStructure} for an intuition.
    
    \begin{figure}
        \centering
        \includegraphics[width=0.45\linewidth]{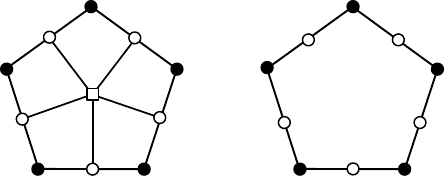}
        \caption{Removing a vertex of type~$0$ and replacing its star with a single polygon.}
        \label{fig:polygonStructure}
    \end{figure}

    This operation produces a new disc diagram, which, with an abuse of notation, we denote $f\colon E\to C_\Gamma$. The corners of $E$ are precisely those of the form $(v,P)$, where $P$ is a polygon of $E$ and $v$ is a vertex in $P$. To each corner we assign an angle~$\frac\pi2$.

    \begin{claim}
        \label{claim:EcellNonPosCurv}
        The $2$-cells of $E$ have curvature bounded above by $\frac\pi2(4-\girth\Gamma)$. In particular, they have non-positive curvature.
    \end{claim}

    \begin{proof}[Proof of Claim~\ref{claim:EcellNonPosCurv}]
        Let $P$ be a $2$-cell of $E$. The map $f$ sends the boundary $\partial P$ to a loop in $\link{C_\Gamma}u$, for some vertex~$u$ of type~$0$. Therefore $\partial P$ is a bipartite cycle consisting of $n$ vertices of type~$1$ and~$n$ vertices of type~$2$, for some $n\in\N_{\ge\girth\Gamma}$. It follows that
        \begin{align*}
            \kappa(P)=(2-2n)\pi+\sum_{i=1}^n\pi+\sum_{j=1}^n\frac\pi2=\frac\pi2(4-n) && \text{(Definition~\ref{def:curvature})}
        \end{align*}
        
        and the claim follows from the hypothesis $\girth\Gamma\ge4$.
    \end{proof}
    
    We now modify~$E$ to construct the disc diagram~$F$. For every $i\in\Z_k$, if $v_i$ has label $3$, then we consider $\epsilon\in\R_{>0}$ small enough such that the only vertex of $D$ that the closed ball $B$ of radius $\epsilon$ about $v_i$ contains is $v_i$ itself. For every edge $e$ of $E$ containing $v_i$, $F$ has a vertex $v_e$ at $e\cap\partial B$ (we call such vertices \emph{auxiliary vertices}); for every $2$-cell $P\in E^{(2)}$ containing $v_i$, $F$ has an edge at $P\cap\partial B$. Finally, we remove the interior of $B$. See Figure~\ref{fig:modify disc dgr} for an intuition. By construction, there is a bijection
    \begin{align*}
        E^{(2)}&\to F^{(2)}\\
        P&\mapsto\overline{P}
    \end{align*}
    between $2$-cells of $E$ and $2$-cells of $F$. If $a$ is an auxiliary vertex of $F$ and $\overline P$ is a $2$-cell containing it, then we assign $\angle(a,\overline P)=\frac\pi2$.

    \begin{figure}
        \centering
        \includegraphics[width=0.5\linewidth]{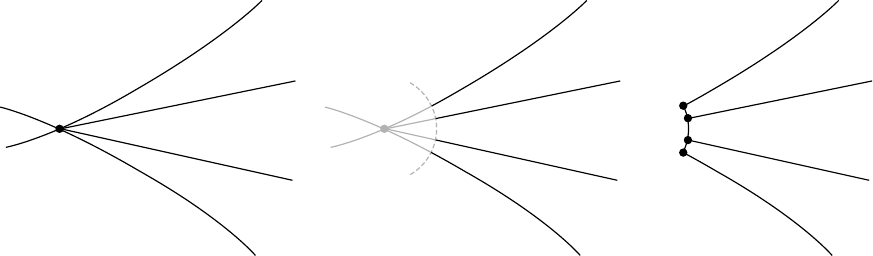}
        \caption{Truncating a corner of label at least $3$.}
        \label{fig:modify disc dgr}
    \end{figure}

        \begin{claim}
        \label{claim:FcellNonPosCurv}
        For every $2$-cell $P$ of $E$, $\kappa(\overline P)\le\kappa(P)$. In particular, the $2$-cells of $F$ have non-positive curvature.
    \end{claim}

    \begin{proof}[Proof of Claim~\ref{claim:FcellNonPosCurv}]
        Let $\overline P$ be a $2$-cell of $F$.  We denote by $e$ the number of edges of~$P$, by $v_1$ the number of vertices of $P$ of type~$1$, by $v_2$ the number of vertices of $P$ of type~$2$. Analogously we define $\overline e$, $\overline{v}_1$ and $\overline v_2$ for $\overline P$. Furthermore, we decompose $v_2$ as $v_2'+v_2''$, where $v_2''$ is the number of intersection vertices with label a least~$3$ (i.e. those vertices that got truncated). By the way we constructed $F$ from~$E$, we have that $\overline e = e+v_2''$, $\overline v_1 = v_1$ and $\overline v_2 = v_2'+2 v_2''$. Therefore
        \begin{align*}
            \kappa(\overline P) &= (2-\overline e)\pi + \overline v_1\pi+\overline v_2\frac{\pi}2   &\text{(Definition~\ref{def:curvature})}\\
            &=(2-e-v_2'')\pi + v_1\pi + (v_2'+2v_2'')\frac\pi2                                      &\text{(vertex truncation)}\\
            &= (2-e)\pi + v_1\pi + (v_2'+v_2'')\frac\pi2 - v_2''\frac\pi2                             &\text{(algebraic manipulation)}\\
            &=\kappa(P)-v_2''\frac\pi2                                                              &\text{(definition of curvature)}
        \end{align*}
        and the claim follows by recalling that $\kappa(P)$ is non-positive by Claim~\ref{claim:EcellNonPosCurv}.
    \end{proof}

    The number of vertices of $F$ that can possibly have positive curvature is at most $\wg{\mathscr L}$. Indeed, let us estimate the curvature of different kind of vertices. Vertices of type~$1$ and vertices of type~$2$ that are neither an intersection vertex nor an auxiliary vertex have non-positive curvature by Lemma~\ref{lem:cpt cube curvatures}. For each of the the intersection vertices $v_i$'s that got replaced, there are exactly two auxiliary vertices whose curvature is non-zero (its value is precisely $\frac\pi2$). Every vertex $v_i$ that did not get replaced by auxiliary vertices (i.e. those of label~$2$) can have curvature at most~$\frac\pi2$, depending on the number of $2$-cells that contain it. \par 
    In fact, some vertices carry strictly negative curvature.

    \begin{claim}
    \label{claim:curBndBy-pi/2}
        Vertices of type $2$ and label at least $3$ that did not correspond to the truncated vertices have strictly negative curvature, bounded above by $-\frac\pi2$.
    \end{claim}

    \begin{proof}[Proof of Claim~\ref{claim:curBndBy-pi/2}]
        Let $v$ be one such vertex, with label $m\ge3$. If $v$ belongs to the interior of $F$, then its link has combinatorial girth at least $2m$~\cite{appel1983artin}*{Lemma 6} and hence $\kappa(v)\le2\pi-2m\frac\pi2\le-\pi$. If $v$ belongs to the boundary of $F$, then a path in $\link F v$ between two vertices corresponding to edges of the same standard tree has combinatorial length at least $m$~\cite{martin2022acylindrical}*{Corollary 4.2}. It follows that $\kappa(v)\le\pi-m\frac\pi2\le-\frac\pi2$.  
    \end{proof}
    
     We shall exploit this negative curvature in order to artificially perform a ``redistribution''. Let $\overline{P}_0$ be a $2$-cell of $F$, whose existence is granted by the fact that $F$ is non-singular (since $D$ was). We can decompose the number of vertices of $\overline{P}_0$ as $p+q'+q''+a'+2a''$, where $p$ is the number of vertices of type~$1$, $q'$ is the number of vertices of type~$2$ and label $2$, $q''$ is the number of vertices of type $2$ and label at least $3$, $a'$ is the number of auxiliary vertices with zero curvature and $2a''$ is the number of auxiliary vertices with positive curvature (by construction they come in pairs). Let us set $\kappa'(\overline P_0)\coloneqq\kappa(\overline{P}_0)-\frac\pi2q''$. For every vertex $w\in \overline P_0$ of type $2$ and label at least $3$, let us set $\kappa'(w)\coloneqq\kappa(w)+\frac\pi2$. Curvatures of the remaining cells are not changed: for each other $2$-cell $\overline P\in F^{(2)}$, we set $\kappa'(\overline P)\coloneqq\kappa(\overline P)$; for each other vertex $v\in F^{(0)}$, we set $\kappa'(v)\coloneqq\kappa(v)$. By construction 
    \begin{equation}
        \label{eq:newCurvature}
        \sum_{v\in F^{(0)}}\kappa(v)+\sum_{\overline P\in F^{(2)}}\kappa(\overline P)=\sum_{v\in F^{(0)}}\kappa'(v)+\sum_{\overline P\in F^{(2)}}\kappa'(\overline P).
    \end{equation}

    \begin{claim}
        \label{claim:curvOfP0}
        We have $\kappa'(\overline{P}_0)\le2\pi-\frac\pi2\wg\Gamma$.
    \end{claim}

    \begin{proof}[Proof of Claim~\ref{claim:curvOfP0}]
        From the definition $\kappa'(\overline P_0)=\kappa(\overline{P}_0)-\frac\pi2q''$ we obtain
    \begin{align*}
        \kappa'(\overline{P}_0)&=\kappa(\overline{P}_0)-\frac\pi2q''\\  
            &=(2-p-q'-q''-a'-2a'')\pi+(p+a')\pi+(q'+q''+2a'')\frac\pi2 -q''\frac\pi2\\
            &=2\pi-\frac{\pi}{2}(q'+2q''+2a).
    \end{align*}
    Let $u_0$ be the type-$0$ vertex of $C_\Gamma$ such that $f(\partial P_0)\subseteq \link{C_\Gamma}{u_0}$. By means of the isomorphism $\link{C_\Gamma}{u_0}\cong\Gamma$, the quantity $q'+2q''+2a$ is bounded below by $\wg{\Gamma}$, so $\kappa'(\overline{P}_0)\le2\pi-\frac\pi2\wg\Gamma$.
    \end{proof}
    
    Vertices and $2$-cells that have non-positive $\kappa$-curvature still have non-positive $\kappa'$-curvature, by definition of $\kappa'$. In particular, all $2$-cells have non-positive $\kappa'$-curvature by Claim~\ref{claim:FcellNonPosCurv} and Claim~\ref{claim:curvOfP0}. The combinatorial Gauss--Bonnet theorem gives
    \begin{align}
    \label{eq:cgb for E}
    2\pi&=\sum_{v\in F^{(0)}}\kappa(v)+\sum_{\overline P\in F^{(2)}}\kappa(\overline P)&\text{(combinatorial Gauss--Bonnet)}\\
        &=\sum_{v\in F^{(0)}}\kappa'(v)+\sum_{\overline P\in F^{(2)}}\kappa'(\overline P)&\text{(Equation~\eqref{eq:newCurvature})}\nonumber\\
        &\le \wg {\mathscr L}\frac\pi2+\kappa'(\overline P_0)&\text{(truncation)}\nonumber\\
        &=\wg {\mathscr L}\frac\pi2+2\pi-\frac\pi2\wg{\Gamma}&\text{(Claim~\ref{claim:curvOfP0})}\nonumber
    \end{align}
    whence follows that $\wg {\mathscr L}\ge\wg\Gamma$.
\end{proof}

\begin{lem}
    \label{lem:wg simp < wg}
    Let $A_\Gamma$ be a two-dimensional Artin group with $\girth\Gamma\ge4$, let $\mathscr L$ be a cycle of standard trees of $D_\Gamma$. If $\mathscr L$ is not simple, then there exists a simple sub-cycle $\mathscr M$ of $\mathscr L$ with
    \[
    \wg{\mathscr M}<\wg{\mathscr L}.
    \]
\end{lem}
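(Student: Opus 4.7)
The plan is to apply Lemma~\ref{lem:simSubCyc}, which furnishes a simple sub-cycle $\mathscr M = \{T_j, T_{j+1}, \ldots, T_{j+k}\}$ of $\mathscr L = \{T_i\}_{i \in \Z_n}$, necessarily with $k + 1 \le n - 1$ trees (the strict inequality uses that $\mathscr L$ itself is non-simple). Letting $u^\ast \coloneqq T_{j+k} \cap T_j$ be the new intersection vertex and $L_{\mathrm{e}}$ the number of excluded intersections $v_{j+k}, \ldots, v_{j-1}$ of $\mathscr L$ whose label is at least~$3$, a direct bookkeeping yields
\[
\wg{\mathscr L} - \wg{\mathscr M} = (n - k - 1) + L_{\mathrm{e}} - [\lab{u^\ast} \ge 3],
\]
where $[\,\cdot\,]$ denotes the Iverson bracket. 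This is strictly positive in every case except possibly when $k = n - 2$, $\lab{u^\ast} \ge 3$, and $L_{\mathrm{e}} = 0$ (both excluded intersections have label~$2$).

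In this edge case, I would replace $\mathscr M$ by the alternative $3$-cycle $\mathscr M'' \coloneqq \{T_{j-2}, T_{j-1}, T_j\}$ consisting of the excluded tree together with its two neighbours in $\mathscr L$. Every 3-cycle is automatically simple (since distinct standard trees intersect in at most one vertex), and one computes $\wg{\mathscr M''} = 3 + [\lab{u^\ast} \ge 3] = 4$ because the inherited intersections $v_{j-2}, v_{j-1}$ have label~$2$. If $n \ge 5$, or if at least one of $v_j, v_{j+1}$ has label $\ge 3$, then $\wg{\mathscr L} \ge 5 > \wg{\mathscr M''}$ and we are done.

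The only outstanding case is $n = 4$ with $\lab{v_i} = 2$ for all $i \in \Z_4$ and $\lab{u^\ast} \ge 3$; I expect the main obstacle to be showing that this configuration cannot occur under the two-dimensionality of $A_\Gamma$. The three consecutive trees $T_j, T_{j+1}, T_{j+2}$ together with their pairwise intersections of labels $2$, $2$ and $\lab{u^\ast}$ realise (conjugates of) standard generators $\alpha, \beta, \gamma$ of $A_\Gamma$ satisfying $[\alpha, \beta] = [\beta, \gamma] = 1$ and a braid relation of length $\lab{u^\ast}$ between $\alpha$ and $\gamma$. The infinite cyclic centre $\Span{z}$ of the dihedral parabolic $\Span{\alpha, \gamma}$ then commutes with $\beta$; using that parabolic intersections are themselves parabolic (so $\Span{\alpha, \beta} \cap \Span{\alpha, \gamma} = \Span{\alpha}$) together with the torsion-freeness of $A_\Gamma$, one deduces $\Span{\alpha, \beta, z} \cong \Z^3$, contradicting Proposition~\ref{prop:2d_iso_inv}. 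Identifying the local standard generators $\alpha, \beta, \gamma$ coherently across the three distinct intersection vertices is the subtle point of the argument, and will rely on the classification of parabolic subgroups in two-dimensional Artin groups.
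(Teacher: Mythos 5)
Your bookkeeping is correct and matches the paper's: starting from the simple sub-cycle of Lemma~\ref{lem:simSubCyc}, the difference $\wg{\mathscr L}-\wg{\mathscr M}$ is $(n-k-1)+L_{\mathrm e}-[\lab{u^\ast}\ge3]$, which is strictly positive except in the single edge case you isolate ($\mathscr M$ omits exactly one tree, both omitted intersection vertices have label~$2$, and $\lab{u^\ast}\ge3$). The gap is in how you handle that edge case. You construct the $3$-cycle $\mathscr M''=\{T_{j-2},T_{j-1},T_j\}$ and then try to use it as the \emph{output} simple sub-cycle, which forces you to compare $\wg{\mathscr M''}=4$ with $\wg{\mathscr L}$ and leaves you stranded in the case $n=4$ with all $v_i$ of label~$2$; your proposed group-theoretic escape (building a $\Z^3$ from $\alpha,\beta$ and the centre $z$ of a dihedral parabolic) is only a sketch, and you yourself flag that identifying the generators coherently and proving $z$ commutes with $\beta$ (one needs $z$ to be a word in $\alpha,\gamma$, which is not automatic if $\Span{\alpha,\gamma}$ is a proper subgroup of the dihedral parabolic) are unresolved. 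As written, the proof is therefore incomplete.

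The fix is immediate and is exactly what the paper does: the object $\mathscr M''$ you built is a cycle of \emph{three} standard trees (the pairwise intersections $v_{j-2},v_{j-1},u^\ast$ are nonempty, and the triple intersection is empty since $v_{j-1}$ and $u^\ast$ have different labels, hence are distinct vertices), and Lemma~\ref{lem:lenCycStdTrs} says every cycle of standard trees in $D_\Gamma$ has at least $\girth\Gamma\ge4$ trees. So $\mathscr M''$ cannot exist, the edge case is vacuous, and $\wg{\mathscr M}<\wg{\mathscr L}$ always holds for the sub-cycle $\mathscr M$ of Lemma~\ref{lem:simSubCyc}. No weighted-girth comparison for $\mathscr M''$ and no $\Z^3$ argument are needed; you had the contradiction in hand the moment you wrote down $\mathscr M''$.
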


\begin{proof}
    \label{claim:simple_subcycle}
    Let $\mathscr L=\{T_i\}_{i\in\Z_n}$. Up to a shift of the indices, we can assume that $\mathscr M= \{T_1,T_{2},\dots, T_{k}\}$ is the simple cycle provided by Lemma~\ref{lem:simSubCyc}, with $w=T_k\cap T_1$ (possibly) being the only vertex in which two consecutive trees intersect that did not appear as one of the intersection vertices~$v_i$'s. Let us now give an estimate on the weighted girth of~$\mathscr M$. We can break down the weighted girth of $\mathscr L$ into
    \[
    k+(n-k)+\abs{\{i\in\Z_{k-1}:\lab{v_i}\ge3\}}+\abs{\{i\in\Z_{n-k+1}:\lab{v_i}\ge3\}}
    \]
    and the weighted girth of $\mathscr M$ into
    \[
    k+\abs{\{i\in\Z_{k-1}:\lab{v_i}\ge3\}}+\tau(w),
    \]
    where $\tau(w)\in\{0,1\}$, depending on whether $\lab w= 2$ or $\lab w \ge 3$. We get
    \[
    \wg{\mathscr L}-\wg{\mathscr M}=(n-k)+\abs{\{i\in\Z_{n-k+1}:\lab{v_i}\ge3\}}-\tau(w)
    \]
    with the remark that $n-k\ge1$, because $\mathscr L$ is not simple. Therefore, $\wg\L\ge\wg{\mathscr M}$ and the only way to obtain $\wg {\mathscr L}=\wg{\mathscr M}$ is to have $n-k=1$, $\{i\in\Z_{n-k+1}:\lab{v_i}\ge3\}=\varnothing$ and $\lab w\ge3$, which is not possible. Indeed, the complementary sub-cycle $\{T_k,T_n,T_1\}$ is a cycle of three standard trees, while $\girth\Gamma\ge4$, contradicting Lemma~\ref{lem:lenCycStdTrs}.
\end{proof}

\begin{proof}[Proof of Lemma~\ref{lem:wgCycStdTrs}, triangle-free case]
    The proof of the first claim is a straightforward combination of Lemma~\ref{lem:wgSimCycStdTrs} and Lemma~\ref{lem:wg simp < wg}.\medskip
    
    Let us now show the second claim, so let $\girth\Gamma\ge5$. Let us set $n=\wg\Gamma$. By the first claim together with Example~\ref{ex:canonCycStdTreesWG}, $n$ is minimal and hence $\mathscr L$ is a simple cycle, by Lemma~\ref{lem:wg simp < wg}. Let use the same construction and notations that were introduced in the proof of Lemma~\ref{lem:wgSimCycStdTrs}: the claim will follow from a more careful examination of the curvatures. In particular, let us focus on the disc diagram~$F$, with combinatorial Gauss--Bonnet theorem giving Equation~\ref{eq:cgb for E}.
    
    Let us assume by contradiction that the number of vertices of $F$ that contribute with positive curvature is strictly less than $n$. Then we have
    \[
    2\pi=\sum_{v\in\ver F}\kappa'(v)+\sum_{\overline P\in F^{(2)}}\kappa'(\overline P)<n\frac\pi2+\kappa'(\overline{P}_0)\le n\frac\pi2+\frac\pi2(4-\wg\Gamma)=2\pi,
    \]
    a contradiction. It follows that the positive curvature that vertices of $F$ contribute with is precisely $n\frac\pi2$. \par 
    Let us assume by contradiction that $F$ consists of at least two distinct $2$-cells, say $\overline{P}_0$ (namely, the one on which the redistribution of curvature was performed) and $\overline{P}_1$. Then
    \begin{align*}
        2\pi&=\sum_{v\in\ver F}\kappa'(v)+\sum_{\overline P\in F^{(2)}}\kappa'(\overline P)     &\text{(Equation~\ref{eq:cgb for E})}\\
            &\le n\frac\pi2+\kappa'(\overline{P}_0)+\kappa'(\overline{P}_1)                     &\text{(non-positive curvatures)}\\
            &\le n\frac\pi2+\frac\pi2(4-\wg{\Gamma})+\frac\pi2(4-\girth{\Gamma})                &\text{(definition of $\kappa'(\overline P_0)$ and Claim~\ref{claim:FcellNonPosCurv})}\\
            &=4\pi-\girth\Gamma\frac\pi2                                                        &\text{($n=\wg\Gamma$)}\\
            &=\frac32\pi                                                                        &\text{($\girth\Gamma\ge5$)}
    \end{align*}
    which is again a contradiction. Hence, it follows that $\overline{P}_0$ is the unique $2$-cell of $F$. Equation~\ref{eq:cgb for E} becomes
    \[
    2\pi=n\frac\pi2+\kappa'(\overline{P}_0)\le n\frac\pi2+\frac\pi2(4-\wg{\Gamma}),
    \]
    showing that $\overline P_0$ has $\kappa'$-curvature precisely $\frac\pi2(4-\wg{\Gamma})$. Therefore, the trees $T_i$'s bound a simple loop of weighted girth $n$ in $\operatorname{Link}_{C_\Gamma}(u_0)\cong\Gamma$.
\end{proof}

\subsubsection{Hyperbolic-type case} We now deal with two-dimensional Artin groups of hyperbolic type, dropping the triangle-free hypothesis.

\begin{lem}
    \label{lem:no C4 in (3,5)}
    Let $A_\Gamma$ be a two-dimensional Artin of hyperbolic type group with $\girth\Gamma=3$. For every cycle of standard trees $\L\subseteq D_\Gamma$, $\wg\L\ge5$. 
\end{lem}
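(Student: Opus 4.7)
I would argue by contradiction, supposing that $\wg\L\le 4$. Since $\wg\L\ge|\L|\ge 3$, this forces either $|\L|=3$ (with at most one intersection vertex of label $\ge 3$) or $|\L|=4$ (with all intersection vertices of label~$2$). As a preliminary observation, any cycle of three standard trees is automatically simple: its intersection vertices must be pairwise distinct (otherwise $T_{i-1}\cap T_i\cap T_{i+1}$ would be non-empty), and two consecutive segments $\sigma_i\subseteq T_{i+1}$ and $\sigma_{i+1}\subseteq T_{i+2}$ can only meet at their common endpoint $v_{i+1}$ since distinct standard trees share at most one vertex.

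For the simple cases ($|\L|=3$ and the simple sub-case of $|\L|=4$), I would build a minimal filling diagram $F\colon D\to D_\Gamma$ as in Section~\ref{subsec:dd loop std trees} and apply the combinatorial Gauss--Bonnet theorem. The key technical input is the curvature bound
\[
\kappa(v_i)\le\pi-\frac{\pi}{m_i}
\]
at an intersection vertex $v_i$ of label $m_i$. Indeed, the corners of $D$ at $v_i$ form a path in $\link D{v_i}$ between the two boundary edges incident to $v_i$; via the link-immersion of Lemma~\ref{lem:reduced dgr -> immersion}, this maps to a path in $\link{D_\Gamma}{F(v_i)}$ joining two type-$1$ link vertices lying on the distinct trees $T_{i-1}$ and $T_i$. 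Since a type-$1$ vertex of $D_\Gamma$ is fixed by at most one standard generator (its stabiliser is cyclic), these endpoints are distinct; the bipartite structure of $\link{D_\Gamma}{v_i}$ and the Moussong edge length $\pi/(2m_i)$ then force length at least $\pi/m_i$. Combining this bound with the non-positivity of $\kappa$ on $2$-cells and on type-$1$ and non-intersection type-$2$ vertices (Lemma~\ref{lem:cpt curvatures}), and the strict negativity of $\kappa$ at the interior type-$0$ vertex guaranteed by Remark~\ref{rem:verTyp0} (strict because hyperbolic type forces $\frac1p+\frac1q+\frac1r<1$ on triangle subgraphs), Gauss--Bonnet yields
\[
2\pi<\sum_{i=1}^{|\L|}\kappa(v_i)\le|\L|\pi-\pi\sum_{i=1}^{|\L|}\frac{1}{m_i}.
\]
In each admissible case the right-hand side is at most $2\pi$ (namely $3\pi/2$ if $|\L|=3$ with all labels $2$; $2\pi-\pi/m_1<2\pi$ if $|\L|=3$ with one label $m_1\ge 3$; exactly $2\pi$ if $|\L|=4$ with all labels~$2$), which gives the desired contradiction.

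It remains to handle a non-simple cycle with $|\L|=4$. Since the $v_i$'s are pairwise distinct and distinct standard trees share at most one vertex, non-simplicity forces $T_i\cap T_j=\{w\}$ for some non-adjacent $i,j$, say $T_2\cap T_4=\{w\}$; the triple $\{T_1,T_2,T_4\}$ is then a cycle of three standard trees (the required triple intersections vanish because $w\notin\{v_1,v_4\}$) with intersection vertices $v_1,w,v_4$, and its weighted girth is at most $3+1=4$, contradicting the simple $|\L|=3$ case just established. The main technical obstacle I foresee is justifying the link-distance bound $\pi/m$ at the intersection vertex in full generality, but this should reduce cleanly to the bipartiteness of the type-$2$ link in the Moussong metric together with the cyclicity of the type-$1$ vertex stabilisers.
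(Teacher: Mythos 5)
Your proposal is correct and follows essentially the same route as the paper: a minimal filling diagram plus the combinatorial Gauss--Bonnet theorem, with positive curvature concentrated at the intersection vertices and strictly negative curvature at an interior type-$0$ vertex forced by hyperbolic type. The only differences are cosmetic improvements: your uniform bound $\kappa(v_i)\le\pi-\pi/m_i$ packages the paper's two configurations of Figure~\ref{fig:std trees of C4} into one inequality, and your explicit reduction of a non-simple $4$-cycle to a $3$-cycle of weighted girth at most $4$ addresses a simplicity issue the paper's proof leaves implicit.
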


\begin{proof}
    Let us assume by contradiction that there exists a cycle of standard trees $\L$ of weighted girth~$3$ or~$4$. If $\wg\L=3$, then the only possibility is that $\L$ consists of a cycle of three standard trees where all three intersection vertices $v_i$ have label~$2$, thus curvature $\kappa(v_i)\le\frac\pi2$. All other vertices and triangles have non-positive curvature by Lemma~\ref{lem:cpt curvatures}. The combinatorial Gauss--Bonnet theorem applied to any filling diagram~$D$ for $\L$ gives
    \[
    2\pi=\sum_{v\in D^{(0)}}\kappa(v)+\sum_{P\in D^{(2)}}\kappa(P)\le\sum_{i=1}^3\kappa(v_i)\le\frac32\pi,
    \]
    a contradiction.\medskip
    
    Let us now assume that $\wg\L=4$. The only possible configurations are the two depicted in Figure~\ref{fig:std trees of C4}. 
    
\begin{figure}[htp]
    \centering
    \subfloat[][]{\includegraphics[width=0.2\linewidth]{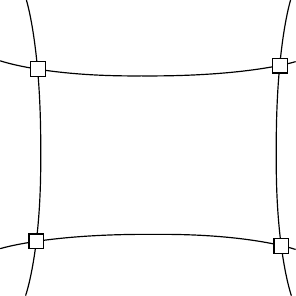}\label{fig:subfig AA}}\quad\subfloat[][]{\includegraphics[width=0.2\linewidth]{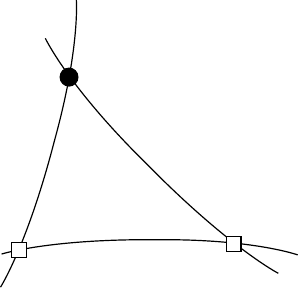}\label{fig:subfig BB}}
    \caption{The two possible configurations of a cycle of standard trees of weighted girth~$4$. White squares denote vertices with label~$2$; black circles denote vertices with label at least $3$.}
    \label{fig:std trees of C4}
\end{figure}

Let us first assume that $\L$ is a square of standard trees with four right angles (Figure~\ref{fig:subfig AA}) and let $D$ be a filling diagram for $\L$. By a standard Gauss--Bonnet argument we obtain
    \begin{align*}
        2\pi&=\sum_{v\in\ver{\partial D}}\kappa(v)+\sum_{u\in\ver{D^{\circ}}}\kappa(u)+\sum_{P\in D^{(2)}}\kappa(P) &\text{(combinatorial Gauss--Bonnet)}\\
        &\le\sum_{v\in\ver{\partial D}}\kappa(v)+\sum_{u\in\ver{D^{\circ}}}\kappa(u) &\text{(triangles have $\le0$ curvature)}\\
         &\le\sum_{v\in\ver{\partial D}}\kappa(v)+\kappa(v_0)+\sum_{u\in\ver{D^{\circ}}\setminus\{v_0\}}\kappa(u) &\text{(for some vertex $v_0$ of type~$0$)}\\
        &\le\sum_{v\in\ver{\partial D}}\kappa(v)+\kappa(v_0) &\text{(interior vertices have $\le0$ curvature)}\\
        &\le4\cdot \frac\pi2 +\kappa(v_0)   &\text{(there are four standard trees)}\\
        &<2\pi&\text{($\kappa(v_0)<0$ by Lemma~\ref{lem:cpt curvatures})}
    \end{align*}
    which is again a contradiction.\par

    The case where $\L$ is a triangle of standard trees with two right angles (Figure~\ref{fig:subfig BB}) is analogous. In particular, the two right angles can contribute with curvature at most $\frac\pi2$ each, while the third vertex can contribute with curvature at most~$\pi$.
\end{proof}

\begin{lem}
    \label{lem:no C5 in (3,6)}
    Let $A_\Gamma$ be a two-dimensional Artin group of hyperbolic type with $\girth\Gamma=3$ and $\wg\Gamma=6$. For every cycle of standard trees $\L\subseteq D_\Gamma$, $\wg\L\ge6$.
\end{lem}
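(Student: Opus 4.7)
The plan is to argue by contradiction, extending the method of Lemma~\ref{lem:no C4 in (3,5)} with a finer case analysis. Suppose there exists a cycle of standard trees $\L \subseteq D_\Gamma$ with $\wg\L \le 5$. By Lemma~\ref{lem:no C4 in (3,5)} one has $\wg\L = 5$, and by Lemma~\ref{lem:simSubCyc} (together with Lemma~\ref{lem:no C4 in (3,5)} again) I may assume that $\L$ is simple. The configurations compatible with $\wg\L = 5$ are exhausted by: (a) $\L$ has $5$ trees and all intersection vertices of label~$2$; (b) $\L$ has $4$ trees with exactly one intersection vertex of label $\ge 3$; (c) $\L$ has $3$ trees with exactly two intersection vertices of label $\ge 3$.

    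In each configuration I would fix a minimal (hence reduced and non-degenerate) filling diagram $F\colon D \to D_\Gamma$ as in Section~\ref{subsec:dd loop std trees}; Remark~\ref{rem:verTyp0} furnishes an interior type-$0$ vertex $v_0$. Applying the combinatorial Gauss--Bonnet theorem together with Lemma~\ref{lem:cpt curvatures} gives: Moussong $2$-cells contribute $\kappa = 0$; each intersection vertex $v_i$ contributes at most $\pi - \pi/m_i$; the vertex $v_0$ contributes strictly negatively, thanks to the hyperbolic-type assumption; and every other vertex contributes non-positively.

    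For small labels, Gauss--Bonnet is violated immediately. In sub-case (c), this happens exactly when $1/m_2 + 1/m_3 \ge 1/2$ (e.g. $(m_2, m_3) \in \{(3,3), (3,4), (3,5), (3,6), (4,4)\}$), since then $\sum_i (\pi - \pi/m_i) \le 2\pi$ and $\kappa(v_0) < 0$ already give $\sum \kappa < 2\pi$. For the remaining configurations the Gauss--Bonnet inequality forces the tight bound
    \[
    L \le \sum_i \bigl(\pi - \tfrac{\pi}{m_i}\bigr),
    \]
    where $L$ is the Moussong length of the closed walk $\omega$ in $\link{D_\Gamma}{F(v_0)} \cong \Gamma$ obtained by immersing $\link D {v_0}$ (via the reduced property of $F$). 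For a minimal ``fan'' filling around $v_0$ the combinatorial length of $\omega$ equals the number $k$ of trees, and its edge labels coincide with the intersection labels $(m_1,\dots,m_k)$.

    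The final step is to show that $\omega$ must contain a simple cycle $\gamma \subseteq \Gamma$ of weighted girth at most~$5$, which violates $\wg\Gamma = 6$. In (a), the walk has odd length~$5$ with all labels~$2$; since odd closed walks contain odd cycles, and the hyperbolic-type condition rules out a $(2,2,2)$-triangle ($\tfrac12+\tfrac12+\tfrac12 \not< 1$), one is left with a label-$2$ $5$-cycle of weighted girth~$5$. In (b), the walk has length~$4$ and label distribution $(2,2,2,m)$; the odd multiplicity of the label $m$ prevents any backtracking (which only pairs labels), so $\omega$ is a simple $4$-cycle with labels $(2,2,2,m)$ and $\wg=5$. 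In (c) (remaining), $\omega$ is a closed $3$-walk in a simplicial graph, hence a triangle with labels $(2, m_2, m_3)$ and $\wg = 5$. The main obstacle I anticipate is to justify that, even in non-fan minimal fillings, the link loop at some interior type-$0$ vertex is forced to have this combinatorial length-$k$ form: either the extra interior type-$0$, type-$1$ or type-$2$ vertices contribute enough additional negative curvature to deliver a direct Gauss--Bonnet contradiction, or they can be absorbed into the fan by a local surgery on the disc diagram.
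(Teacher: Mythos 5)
Your setup matches the paper's: the same three simple configurations (a $(2,2,2,2,2)$-pentagon of trees, a square with one corner of label $\ge3$, a triangle with two such corners), a minimal filling diagram, and Gauss--Bonnet with non-positive curvature everywhere except the intersection corners and a strictly negative interior type-$0$ vertex. Your corner bound $\kappa(v_i)\le\pi-\pi/m_i$ is correct and in fact sharper than the bound $\kappa(v_i)\le\pi$ the paper uses for large-label corners, which is why your sub-case (c) closes immediately when $1/m_2+1/m_3\ge1/2$. (One small caveat on the reduction to simple cycles: Lemma~\ref{lem:wg simp < wg} assumes $\girth\Gamma\ge4$, so you cannot cite it verbatim; but the inequality $\wg{\mathscr M}\le\wg{\mathscr L}$ extracted from its proof still holds without that hypothesis, and together with the $\wg\le4$ case this reduction is salvageable.)

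The genuine gap is exactly the one you flag at the end, and neither of your proposed fallbacks works as stated. In the configurations where the corner count leaves a margin of $+\pi/2$ (the pentagon; the square and triangle with large labels), fallback (i) --- ``extra interior vertices contribute enough additional negative curvature'' --- is false as a blanket claim: under the hypotheses $\wg\Gamma=6$ and hyperbolic type, a type-$0$ vertex whose link-cycle is a triangle (necessarily with all labels $\ge3$) contributes as little as $-\pi/12$ (labels $(3,3,4)$), and one whose link-cycle is a square contributes as little as $-\pi/3$; one vertex of each kind contributes only $-5\pi/12>-\pi/2$, so Gauss--Bonnet is not violated by curvature counting alone. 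Fallback (ii), the ``local surgery'', is not described and is not obviously available. The paper closes this gap with a structural argument that your proposal does not supply: since $\wg\Gamma=6$ and hyperbolic type exclude $(p,q,2)$-triangles, $(p,2,2,2)$-squares and $(2,2,2,2,2)$-pentagons, every label-$2$ intersection corner forces an adjacent type-$0$ vertex whose link-cycle has girth at least~$4$ (hence curvature at most $-\pi/3$); two such vertices already overshoot the margin, so there is exactly one, which forces $D$ to be the star of that single type-$0$ vertex; the boundary then exhibits a cycle of girth at most~$4$ (or of weighted girth at most~$5$) in $\Gamma$, contradicting the hypotheses. Until you prove this localisation of the negative curvature --- or an equivalent statement forcing the fan structure --- the final step of reading off a short weighted cycle in $\Gamma$ from the link of $v_0$ is not justified.
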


\begin{proof}
    Because $\wg\Gamma=6$ and $A_\Gamma$ is of hyperbolic type, $\Gamma$ does not contain any $(2,2,2,2,2)$-pentagons nor $(p,2,2,2)$-squares nor $(p,q,2)$-triangles (for $p,q\in\N_{\ge3}$). Let us assume by contradiction that there exists a cycle of standard trees $\L\subseteq D_\Gamma$ with $\wg\L\le5$. The argument of Lemma~\ref{lem:no C4 in (3,5)} excludes the cases $\wg\L\in\{3,4\}$, so we assume that $\wg\L=5$. The possible configurations of standard trees for $\L$ are those of Figure~\ref{fig:configurations of standard trees}.

    \begin{figure}[htp]
    \centering
    \subfloat[][]{\includegraphics[width=0.2\linewidth]{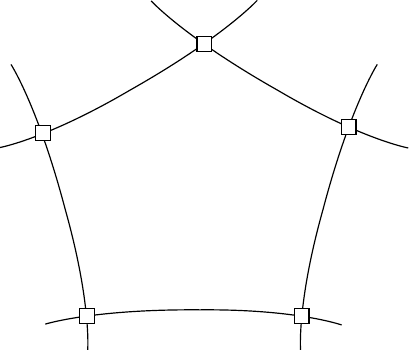}\label{fig:subfig A}}\quad\subfloat[][]{\includegraphics[width=0.2\linewidth]{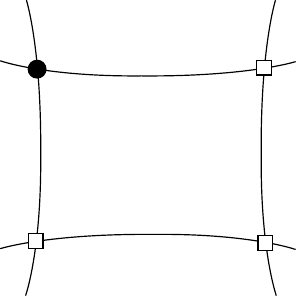}\label{fig:subfig B}}\quad\subfloat[][]{\includegraphics[width=0.2\linewidth]{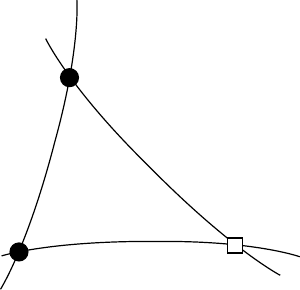}\label{fig:subfig C}}\\
    \subfloat[][]{\includegraphics[width=0.35\linewidth]{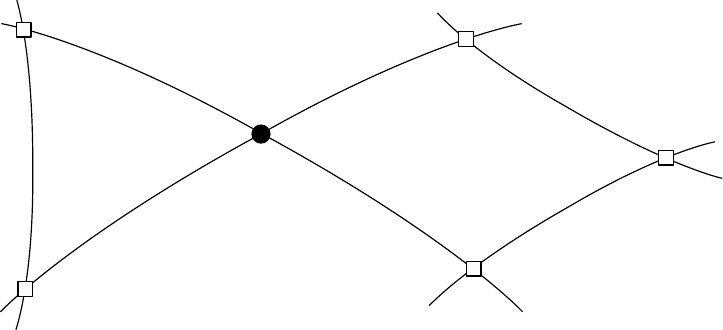}\label{fig:subfig D}}\quad \subfloat[][]{\includegraphics[width=0.25\linewidth]{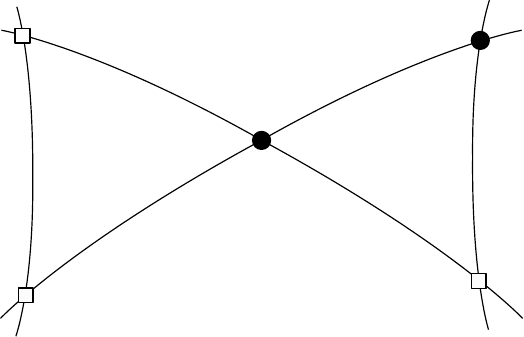}\label{fig:subfig E}}
    \caption{The five possible configurations of cycles of standard trees of weighted girth~$5$. White squares denote vertices with label~$2$; black circles denote vertices with label at least $3$.}
    \label{fig:configurations of standard trees}
\end{figure}

    For a simple cycle of standard trees, we fill in a disc diagram $D$ and use the combinatorial Gauss--Bonnet theorem to derive a contradiction, as described in Section~\ref{sec:comb gbt}. By Lemma~\ref{lem:cpt curvatures}, $2$-cells of $D$ have non-positive curvature. Moreover, vertices of type~$1$ or type~$2$ have non-positive curvature, except for the intersection vertices $\{v_i\}_{i=1}^k$. Therefore, the combinatorial Gauss--Bonnet theorem gives a first estimate of the form
    \begin{equation}
        \label{eqn:Gauss--Bonnet}
        2\pi\le \sum_{i=1}^k\kappa(v_i)+\sum_{v\in\ver D}\kappa(v)\le\sum_{i=1}^k\kappa(v_i)+\sum_{v\in\operatorname{V}^0(D)}\kappa(v),
    \end{equation}
    where $\operatorname{V}^0(D)$ denotes the vertices of $D$ of type~$0$.  Let $v$ be such a vertex and let $\gamma\subseteq\Gamma$ be a simple loop such that
    \[
    \kappa(v)\le(2-\girth\gamma)\pi-\pi\sum_{e\in\edge\gamma}\frac 1 {m_e}
    \]
    (such a vertex of type~$0$ exists by Remark~\ref{rem:verTyp0}). We obtain the following estimates:
    \begin{itemize}
        \item if $\girth\gamma=3$, then let $\alpha\coloneqq 1/p+1/q+1/r$, where $p,q,r\in\N_{\ge3}$ are the labels of the triangle~$\gamma$. Since $A_\Gamma$ is of hyperbolic type, $\alpha<1$ and the above equation gives $\kappa(v)\le(\alpha-1)\pi$;
        \item if $\girth\gamma=4$, then 
        \[
        \kappa (v)\le-2\pi+\pi\sum_{e\in\edge\gamma}\frac 1 {m_e}\le -2\pi+\pi\left(2\cdot \frac 1 2+2\cdot\frac 1 3\right)=-\frac \pi 3,
        \]
        where we use that $\Gamma$ does not contain any $(p,2,2,2)$-squares for $p\in\N_{\ge2}$;
        \item if $\girth\gamma=5$, then
        \[
        \kappa (v)\le-3\pi+\pi\sum_{e\in\edge\gamma}\frac 1 {m_e}\le -3\pi+\pi\left(4\cdot \frac 1 2+\frac 1 3\right)=-\frac 2 3 \pi,
        \]
        where we use that $\Gamma$ does not contain any $(2,2,2,2,2)$-pentagons;
        \item if $\girth\gamma\ge6$, then 
        \[
        \kappa(\gamma)\le(2-\girth\gamma)\pi+\frac \pi 2\girth\gamma\le-\pi.
        \]
    \end{itemize}
    The proof now reduces to checking that none of the cases of Figure~\ref{fig:configurations of standard trees} is possible. Let us partition the set of vertices of type~$0$ of $D$: for $i\in\{3,4,5\}$, let $V_i$ be the set of vertices~$u$ with $\girth{\link D u}=i$ and let $V$ be the set of vertices~$u$ with $\girth{\link D u}\ge6$. \medskip
    
    If $\partial{D}$ consists of a right-angled pentagon (Figure~\ref{fig:subfig A}), then Equation~\ref{eqn:Gauss--Bonnet} yields $2\pi\le5\cdot\frac\pi 2+\sum_v\kappa(v)$ or, equivalently, $0\le\frac\pi2+\sum_{v}\kappa(v)$. If $V\ne\varnothing$ or $V_5\ne\varnothing$, then we readily obtain a contradiction. Therefore, let us assume that $V,V_5=\varnothing$. The standard trees of the pentagon intersect in vertices with label $2$, while triangles of $\Gamma$ do not contain edges labelled by $2$. It follows that every intersection vertex is adjacent to a vertex in $V_4$. If $\abs{V_4}\ge2$, then $V_4$ would contribute with a curvature at most $-\frac23\pi$, giving a contradiction. Therefore $V_4$ consists of a single vertex $u$ and $D=\operatorname{Star}_D(u)$. It follows that there exists a loop $\gamma\subseteq\Gamma$ of girth at most $4$ such that the image of $D$ in $D_\Gamma$ is contained in a translate of $K_\gamma$, a contradiction. \medskip

    If $\partial D$ is a square with three right-angles (Figure~\ref{fig:subfig B}), then Equation~\ref{eqn:Gauss--Bonnet} gives $2\pi\le3\cdot\frac\pi2+\pi+\sum_v\kappa(v)$ or, equivalently, $0\le\frac\pi2+\sum_v\kappa(v)$ and an argument similar to the one of the previous paragraph applies. If $V$ or $V_5$ is non-empty, then vertices of type~$0$ would contribute with an excess of negative curvature, a contradiction. Because standard trees intersect in three vertices of label~$2$ and triangles of $\Gamma$ have labels at least~$3$, it follows that every such intersection vertex is adjacent to a vertex of $V_4$. Again, if $\abs{V_4}\ge2$, then there would be an excess of negative curvature, against the combinatorial Gauss--Bonnet theorem. Therefore the three intersection vertices labelled by~$2$ belong to the link of a unique vertex of $V_4$. Again, this yields a contradiction, since squares of $\Gamma$ can have at most two edges with label~$2$.  \medskip

    If $\partial D$ is a triangle with one right angle (Figure~\ref{fig:subfig C}), then Equation~\ref{eqn:Gauss--Bonnet} gives again $0\le\frac\pi2+\sum_v\kappa(v)$. Arguing as above, there must be exactly one vertex $u\in V_4$. Let $\lambda\subseteq\Gamma$ be a square subgraph such that $\kappa(u)\le\pi(2-\girth\lambda)+\pi\sum_{e\in\edge\lambda}\frac{1}{m_e}$, as provided by Lemma~\ref{lem:cpt curvatures}. We shall distinguish two cases, depending on the labels of~$\lambda$. \par 
    If $\lambda$ has two edges labelled by~$2$, then $\link D u$ contains a vertex of type~$2$ and label~$2$ and it follows that there must be at least another vertex of $V_4$ by~\cite{appel1983artin}*{Lemma~6}, giving an excess of negative curvature, a contradiction. \par 
    If $\lambda$ has exactly one edge labelled by~$2$, then we can have a sharper bound for the curvature with
    \[
    \kappa(u)\le\pi(2-4)+\sum_{e\in\edge\lambda}\frac{1}{m_e}\le-2\pi+\frac\pi2+3\cdot\frac\pi3=-\frac\pi2
    \]
    and Equation~\ref{eqn:Gauss--Bonnet} becomes $0\le\sum_{v\ne u}\kappa(v)$. If $D$ contains another vertex of type~$0$, then there would be an excess of negative curvature, giving a contradiction. It follows that $D$ coincides with the star of $u$ in $D$, against the hypothesis that $\link D u$ has girth~$4$. \medskip

    In the cases of Figure~\ref{fig:subfig D} and Figure~\ref{fig:subfig E}, we have four simple sub-cycle of standard trees to consider. The only case that was not previously covered is the one of a triangle of standard trees with exactly two right angles. The combinatorial Gauss--Bonnet theorem gives
    \[
    2\pi\le2\cdot\frac\pi2+\pi+\sum_{v\in\ver D}\kappa(v),
    \]
    which easily shows a contradiction, since there exists at least one vertex of type~$0$, which has strictly negative curvature.\medskip 
    
    This exhausts all the possible configurations of standard trees of weighted girth~$5$ and concludes the proof. 
\end{proof}

\begin{proof}[Proof of Lemma~\ref{lem:wgCycStdTrs}, hyperbolic-type case]
    It is not restrictive to assume that $\Gamma$ contains an induced triangle, for otherwise the \emph{triangle-free case} applies. Because $\Gamma$ is two dimensional of hyperbolic type and contains an induced triangle, $\wg\Gamma\in\{5,6\}$. The combination of Lemma~\ref{lem:no C4 in (3,5)} and Lemma~\ref{lem:no C5 in (3,6)} yields the claim.
\end{proof}

\section{Minimal embeddings of cycle Artin groups}
\label{sec:minCycEmbedd}

For all Artin groups, we give an algebraic characterisation of the girth of their defining graph.

\begin{thm}
    \label{thm:girIsoInv}
    Let $\Gamma$ be a labelled graph; $\girth\Gamma$ coincides with the minimum $n\in\N_{\ge3}$ such that there exists a labelled cycle $C_n$ on $n$ vertices and an embedding $A_{C_n}\to A_\Gamma$.
\end{thm}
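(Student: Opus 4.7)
The plan is to prove the two inequalities separately.

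For the easy direction (an embedding of length $\girth\Gamma$ exists): let $\gamma\subseteq\Gamma$ be any cycle subgraph with $\girth\gamma=\girth\Gamma$. By van der Lek's theorem the standard parabolic subgroup $A_\gamma\le A_\Gamma$ is isomorphic to the Artin group of the labelled cycle $\gamma$, and the inclusion $A_\gamma\hookrightarrow A_\Gamma$ is the required embedding.

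For the harder direction ($n\ge\girth\Gamma$ whenever $\phi\colon A_{C_n}\hookrightarrow A_\Gamma$): the main engine is the combination of Lemma~\ref{lem:mono->cycle}, which produces from such an embedding a cycle of $k\le n$ standard trees in $D_\Gamma$, and Lemma~\ref{lem:lenCycStdTrs}, which forces $k\ge\girth\Gamma$. The catch is that Lemma~\ref{lem:mono->cycle} requires $A_\Gamma$ to be two-dimensional of hyperbolic type, so I would split into cases based on $\girth\Gamma$ and handle separately the cases where this hypothesis fails.

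If $\girth\Gamma=3$ the inequality is immediate since a labelled cycle needs at least three vertices. If $\girth\Gamma\ge 5$, then $\Gamma$ is triangle-free and contains no $(2,2,2,2)$-square, so $A_\Gamma$ is two-dimensional of hyperbolic type and the main engine applies. The case $\girth\Gamma=4$ with $\Gamma$ having no $(2,2,2,2)$-square is identical, since $A_\Gamma$ is still of hyperbolic type.

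The remaining and genuinely delicate case is $\girth\Gamma=4$ with $\Gamma$ containing a $(2,2,2,2)$-square, where the only thing to rule out is $n=3$; this is where I expect the main obstacle. Here $A_\Gamma$ is two-dimensional (as $\Gamma$ is triangle-free) but not of hyperbolic type, so Lemma~\ref{lem:mono->cycle} is not directly available. By Proposition~\ref{prop:2d_iso_inv}, $A_\Gamma$ contains no $\Z^3$, which immediately rules out the subcase in which $A_{C_3}$ is of spherical type (such a triangle Artin group is not two-dimensional, and hence itself contains $\Z^3$). For the remaining subcases (Euclidean or hyperbolic triangle labels on $C_3$) the plan is to work in the cubical Deligne complex $C_\Gamma$, which is CAT(0) because $\Gamma$ is triangle-free, and to adapt the fixed-point and intersection analysis of Section~\ref{sec:embeddCycArtGrp} to the three generators $\phi(u_1),\phi(u_2),\phi(u_3)$. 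This should yield a cycle of at most three standard trees in $C_\Gamma$; since Lemma~\ref{lem:lenCycStdTrs} holds for any two-dimensional $A_\Gamma$, it would force $\girth\Gamma\le 3$, contradicting $\girth\Gamma=4$.
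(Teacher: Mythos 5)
Your proposal is correct and follows essentially the same route as the paper: the canonical parabolic embedding for the upper bound, and for the lower bound the case split on $\girth\Gamma$ with the cycle-of-standard-trees machinery (Lemma~\ref{lem:mono->cycle} plus Lemma~\ref{lem:lenCycStdTrs}) in the hyperbolic-type cases; the paper's only organisational difference is that it treats all of $\girth\Gamma=4$ uniformly via Lemma~\ref{lem:gir3VsGir4} rather than splitting on the presence of a $(2,2,2,2)$-square. The one step you leave implicit in the delicate subcase is how to get ellipticity of the three generators once hyperbolic type fails: Lemma~\ref{lem:dihedral of hyp-type are parab} is no longer available, and the paper substitutes Vaskou's classification (\cite{vaskou2022isomorphism}*{Corollary 3.12}), valid because a triangle-free $\Gamma$ has no euclidean triangles, to conjugate the non-abelian dihedral parabolics of $A_{C_3}$ into dihedral parabolics of $A_\Gamma$ (and since a two-dimensional triangle has at most one label equal to $2$, every generator lies in such a subgroup, so no $\Z^2$ classification is needed); with that input your "adaptation'' goes through exactly as planned.
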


Moreover, by restricting to two-dimensional Artin groups of hyperbolic type, we will show a similar algebraic characterisation of the weighted girth.

\begin{thm}
    \label{thm:wgIsoInv}
    Let $A_\Gamma$ be a two-dimensional Artin group of hyperbolic type. The weighted girth $\wg\Gamma$ of~$\Gamma$ coincides with the minimum $n\in\N_{\ge3}$ such that there exists a cycle $C_n$ on $n$ vertices and an embedding $R_{C_n}\to A_\Gamma$.
\end{thm}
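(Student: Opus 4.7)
The plan is to establish the two inequalities $m \le \wg\Gamma$ and $m \ge \wg\Gamma$, where $m$ denotes the minimum appearing in the statement.

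For the upper bound $m \le \wg\Gamma$, I would argue by an explicit construction, parallel in spirit to the proof of Corollary~\ref{cor:cycle subgrp dihedral}. Pick a simple cycle subgraph $\gamma \subseteq \Gamma$ realising $\wg\gamma = \wg\Gamma$, with vertices $v_1,\dots,v_\ell$ arranged cyclically (so $\ell = \girth\gamma$). For each edge $\{v_i,v_{i+1}\}$ with label $m_{i,i+1} \ge 3$, let $z_{i,i+1}$ generate the centre of the dihedral parabolic $\Span{v_i,v_{i+1}}$. Inside $A_\gamma$, the subgroup generated by the $4$th powers of the $v_i$ together with the $4$th powers of the $z_{i,i+1}$ is isomorphic to a cycle RAAG $R_{C_N}$ with $N = \ell + |\{i \in \Z_\ell : m_{i,i+1} \ge 3\}| = \wg\Gamma$, by~\cite{jankiewicz2022right}*{Theorem~1.1}. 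Composing with the inclusion $A_\gamma \hookrightarrow A_\Gamma$ provides the required embedding $R_{C_{\wg\Gamma}} \to A_\Gamma$.

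For the lower bound $m \ge \wg\Gamma$, I would start from an arbitrary monomorphism $\phi\colon R_{C_n} \to A_\Gamma$ with $n \ge 3$. The case $n = 3$ is immediately excluded by Lemma~\ref{lem:no Z^3}, since $R_{C_3} \cong \Z^3$; hence $n \ge 4$. By Lemma~\ref{lem:mono->cycle}, $\phi$ determines a cycle of standard trees $\L = \{T_j\}_{j \in \Z_k}$ in $D_\Gamma$, where $k$ is the number of indices $i \in \Z_n$ for which $\phi(u_i)$ acts tree-elliptically (and $k \ge 3$ by Lemma~\ref{lem:cycle of std trees}). The key step is to show the equality $\wg\L = n$. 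By Lemma~\ref{lem:cycStdTrsAssToRaag}, no two consecutive generators $\phi(u_i), \phi(u_{i+1})$ can both be vertex-elliptic, so each vertex-elliptic generator is isolated between two tree-elliptic neighbours, and its fixed vertex---necessarily of label $\ge 3$---coincides with the intersection of the two corresponding standard trees of $\L$. Conversely, when two consecutive generators are both tree-elliptic, the corresponding intersection vertex has label $2$. Hence the vertex-elliptic generators are in bijection with the intersection vertices of $\L$ of label $\ge 3$, giving $\wg\L = k + (n-k) = n$. Combining with Lemma~\ref{lem:wgCycStdTrs} (hyperbolic-type case), we conclude $n = \wg\L \ge \wg\Gamma$.

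The main obstacle is the bookkeeping behind the equality $\wg\L = n$: one has to extract the exact correspondence between vertex-elliptic generators and intersections of label $\ge 3$ from the trichotomy of Lemma~\ref{lem:cycStdTrsAssToRaag}, and check that the two kinds of intersection vertex exhaust the cycle. The heavier technical work---the lower bound $\wg\L \ge \wg\Gamma$---has already been packaged into Lemma~\ref{lem:wgCycStdTrs} via the combinatorial Gauss--Bonnet machinery, so one only needs to invoke it here.
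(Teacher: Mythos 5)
Your proposal is correct and follows essentially the same route as the paper: the upper bound via the canonical embedding of Example~\ref{ex:cannEmbedd} (Jankiewicz--Schreve), and the lower bound by passing to the associated cycle of standard trees and combining Lemma~\ref{lem:cycStdTrsAssToRaag} with Lemma~\ref{lem:wgCycStdTrs}. The only (harmless) difference is that you establish the exact equality $\wg\L=n$, whereas the paper only extracts the inequality $n\ge\wg\L$ from Lemma~\ref{lem:cycStdTrsAssToRaag}, which is all that is needed.
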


The section will be devoted to proving Theorem~\ref{thm:girIsoInv} and Theorem~\ref{thm:wgIsoInv}. We start with the elementary observation that cycle subgraphs of the defining graph induce embeddings of a cycle Artin group (resp. of a cycle RAAG) of the corresponding girth (resp. weighted girth):

\begin{example}[canonical cycle embeddings]
    \label{ex:cannEmbedd}
    Let $\Gamma$ be a labelled graph. For every cycle subgraph $\gamma\subseteq\Gamma$, the cycle Artin group $A_\gamma$ embeds into $A_\Gamma$ as a standard parabolic subgroup.\par
    Moreover, let us assume that $\Gamma$ is two dimensional. Denote by $\{a_i\}_{i}$ the set of vertices of $\gamma$ and, if $m_{a_i,a_{i+1}}\ge3$, then denote by $z_i$ the generator of the centre of $\Span{a_i,a_{i+1}}$. The subgroup of $A_\Gamma$ generated by the $g_i^4$'s and $z_j^4$'s is isomorphic to the right-angled Artin group based on the cycle $C_{\wg\gamma}$~\cite{jankiewicz2022right}*{Theorem 1.1}.
\end{example}

\subsection{Characterisation of girth} A cycle-of-standard-trees argument provides the claim in the case of dimension~$2$ and hyperbolic type.

\begin{lem}
    \label{lem:girIsoInv}
    Let $A_\Gamma$ be a two-dimensional Artin group of hyperbolic type; $\girth\Gamma$ coincides with the minimum $n\in\N_{\ge3}$ such that there exists a labelled cycle $C_n$ on $n$ vertices and an embedding $A_{C_n}\to A_\Gamma$.
\end{lem}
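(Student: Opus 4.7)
The proof proposal splits into the two standard inequalities, and the heavy lifting has already been done in Section~\ref{sec:embeddCycArtGrp} and Section~\ref{sec:lenAndWgCycStdTrs}.

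\medskip

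\textbf{Upper bound.} Let $m = \girth\Gamma$ and choose a simple cycle subgraph $\gamma \subseteq \Gamma$ of length~$m$. Then, by the van der Lek result on standard parabolic subgroups (recalled in Subsection~\ref{subsec:artinBG}), $A_\gamma$ embeds in $A_\Gamma$ as the standard parabolic subgroup on $\ver\gamma$. This is exactly the canonical embedding mentioned in Example~\ref{ex:cannEmbedd}, and it realises a labelled cycle $C_m = \gamma$ together with an embedding $A_{C_m} \hookrightarrow A_\Gamma$. So the minimum in the statement is at most $\girth\Gamma$.

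\medskip

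\textbf{Lower bound.} Suppose $\phi \colon A_{C_n} \to A_\Gamma$ is an embedding with $n \geq 3$, and let us show $n \geq \girth\Gamma$. By Lemma~\ref{lem:mono->cycle}, $\phi$ determines a cycle of standard trees $\{T_j\}_{j\in\Z_k}$ in~$D_\Gamma$, where $k$ is the number of indices $i \in \Z_n$ for which $\phi(u_i)$ acts tree-elliptically on $D_\Gamma$. In particular $k \leq n$. Applying Lemma~\ref{lem:lenCycStdTrs} to this cycle of standard trees yields $k \geq \girth\Gamma$, and therefore $n \geq k \geq \girth\Gamma$. Combining the two inequalities gives the characterisation.

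\medskip

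\textbf{Where the work lies.} There is essentially no new obstacle at this point: the nontrivial content has been fully absorbed into Lemma~\ref{lem:mono->cycle} (which packages the hyperbolic-type and two-dimensional hypotheses to convert any cycle-Artin monomorphism into a genuine cycle of standard trees) and into Lemma~\ref{lem:lenCycStdTrs} (which, via the combinatorial Gauss--Bonnet argument in Section~\ref{sec:comb gbt}, forces the length of any cycle of standard trees to be at least $\girth\Gamma$). The only subtlety to record is that $k$ may be strictly smaller than $n$ when some $\phi(u_i)$ are vertex-elliptic, but this only helps: the inequality $k \leq n$ is all that the argument needs, and the $k \geq \girth\Gamma$ bound for the shorter cycle of standard trees still forces $n \geq \girth\Gamma$.
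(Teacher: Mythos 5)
Your proposal is correct and follows essentially the same route as the paper: Example~\ref{ex:cannEmbedd} for the upper bound, and Lemma~\ref{lem:mono->cycle} (via Lemma~\ref{lem:cycle of std trees}, which gives $3\le k\le n$) combined with Lemma~\ref{lem:lenCycStdTrs} for the lower bound. The only detail the paper adds is an explicit treatment of the case where $\Gamma$ is a tree (infinite girth), disposed of via Remark~\ref{rem:treeDefGrph}; your upper-bound step tacitly assumes a cycle subgraph exists, so it is worth recording that edge case separately.
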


\begin{proof}
    We can assume that $\Gamma$ has finite girth, for otherwise the claim trivially holds. Indeed, if $\Gamma$ is a tree, then it is two dimensional of hyperbolic type (the characterisation of Definition~\ref{def:hypTyp} checks a vacuous condition) and therefore any cycle Artin group embedding induces a cycle of standard trees in $C_\Gamma$ (see Lemma~\ref{lem:mono->cycle}). However, when $\Gamma$ is a tree, $C_\Gamma$ contains no cycles of standard trees, as observed in Remark~\ref{rem:treeDefGrph}.\par 
    Example~\ref{ex:cannEmbedd} provides a labelled cycle $C_{\girth\Gamma}$ and an embedding $A_{C_{\girth\Gamma}}\to A_\Gamma$.  To show that $\girth\Gamma$ is minimal, let $n\in\N_{\ge3}$, let $C_n$ be a labelled cycle and assume there is a group monomorphism $\phi\colon A_{C_n}\to A_\Gamma$. By Lemma~\ref{lem:cycle of std trees}, there are $k\in\{3,\dots,n\}$ and a cycle of standard trees $\{T_i\}_{i\in\Z_k}$ associated with $\phi$. By Lemma~\ref{lem:lenCycStdTrs}, $\girth\Gamma\le k$.
\end{proof}

An ad-hoc argument covers the remaining case, i.e. when $A_\Gamma$ is not of hyperbolic type.

\begin{lem}
    \label{lem:gir3VsGir4}
    Let $\Delta$ be a labelled triangle and let $\Gamma$ be a triangle-free labelled graph. There exists no group monomorphism $A_\Delta\to A_\Gamma$.
\end{lem}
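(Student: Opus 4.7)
The plan is to use the preceding Lemma~\ref{lem:girIsoInv} wherever possible, and to isolate what cases require a bespoke argument. First I would observe that $\Gamma$ being triangle-free forces $A_\Gamma$ to be two-dimensional: any spherical standard parabolic subgroup corresponds to a complete subgraph of $\Gamma$, which can have at most two vertices. With this in hand I would split on whether $A_\Gamma$ is of hyperbolic type.

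If $A_\Gamma$ is of hyperbolic type, Lemma~\ref{lem:girIsoInv} applies directly: $\girth{\Gamma}$ coincides with the smallest $n\geq 3$ for which a labelled cycle Artin group $A_{C_n}$ embeds into $A_\Gamma$. Since $\Delta$ is a triangle, $A_\Delta = A_{C_3}$; since $\Gamma$ is triangle-free, $\girth{\Gamma}\geq 4 > 3$, ruling out any such embedding.

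If $A_\Gamma$ is not of hyperbolic type, then condition~(2) in the characterisation of hyperbolic type (Subsection~\ref{subsec:hyp-type}) must fail, and combined with triangle-freeness this forces $\Gamma$ to contain an induced $(2,2,2,2)$-square. I would split further on whether $\Delta$ is of spherical type. If it is, then the construction in the proof of Proposition~\ref{prop:2d_iso_inv} exhibits a $\Z^3$ subgroup of $A_\Delta$, generated by the generators of $Z(A_\Delta)$, of $Z(A_e)$ for an edge $e$ of label at least $3$, and by a vertex of $e$. Since $A_\Gamma$ is two-dimensional, Proposition~\ref{prop:2d_iso_inv} forbids a $\Z^3$ subgroup, a contradiction.

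The main obstacle is the final sub-case, where $\Delta$ is a non-spherical triangle and so $A_\Delta$ is itself two-dimensional, making the $\Z^3$ argument unavailable. The strategy is to exploit Lemma~\ref{lem:lenCycStdTrs}, which only requires that $A_\Gamma$ be two-dimensional and bounds any cycle of $k$ standard trees in $D_\Gamma$ by $k\geq\girth{\Gamma}$; producing a cycle of three standard trees from $\phi\colon A_\Delta\to A_\Gamma$ would contradict $\girth{\Gamma}\geq 4$. The construction in Section~\ref{sec:embeddCycArtGrp} used hyperbolic type through Theorem~\ref{thm:classify Z^2 subgrp}, but in the non-spherical case at least two edges of $\Delta$ carry label at least $3$, so at least two of the pairs $\langle\phi(u_i),\phi(u_{i+1})\rangle$ generate dihedral subgroups $A_m$ with $m\geq 3$. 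One can then argue, via an adaptation of Lemma~\ref{lem:dih_par_are_max} to the two-dimensional setting (using that $A_\Gamma$ has no $\Z^3$), that each such subgroup is parabolic and hence fixes a type-two vertex of $D_\Gamma$; the intersection pattern of these fixed-point sets, together with the ellipticity of each $\phi(u_i)$, produces the required cycle of three standard trees.
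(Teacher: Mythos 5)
Your reduction to the final sub-case is sound: triangle-freeness gives two-dimensionality of $A_\Gamma$; the hyperbolic-type branch follows from Lemma~\ref{lem:girIsoInv}; and a spherical $\Delta$ is excluded by the $\Z^3$ obstruction of Proposition~\ref{prop:2d_iso_inv}. (The paper does not split on hyperbolic type at all --- its argument is uniform --- but your decomposition is legitimate.) The problem is the last step.

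The gap is your claim that ``an adaptation of Lemma~\ref{lem:dih_par_are_max} to the two-dimensional setting'' shows that each subgroup $\phi(\Span{u_i,u_{i+1}})\cong\da m$ with $m\ge3$ is contained in a dihedral parabolic subgroup. Lemma~\ref{lem:dih_par_are_max} is a \emph{maximality} statement: it assumes a dihedral parabolic subgroup is already contained in $H$ and concludes $H$ equals that parabolic. It gives no information about a subgroup abstractly isomorphic to $\da m$ that is not known a priori to contain a parabolic, so there is nothing to adapt. The containment you need is exactly the content of Lemma~\ref{lem:dihedral of hyp-type are parab}, whose proof uses the acylindrical action on the coned-off Deligne complex and hence genuinely requires hyperbolic type --- which is unavailable in your remaining branch, where $\Gamma$ contains a $(2,2,2,2)$-square. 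The paper closes this gap by invoking Vaskou's classification of dihedral Artin subgroups of two-dimensional Artin groups (\cite{vaskou2022isomorphism}, Corollary~3.12 / Theorem~D): since a triangle-free graph has no euclidean triangles, every subgroup of $A_\Gamma$ abstractly isomorphic to a non-abelian dihedral Artin group is conjugated into a dihedral parabolic subgroup. The remark following the paper's proof explicitly flags this external input as the essential ingredient; without it (or an equivalent argument) your construction of the cycle of three standard trees, and hence the contradiction with Lemma~\ref{lem:lenCycStdTrs}, does not get off the ground. A secondary point to be careful about: even after the two label-$\ge3$ edges are handled, the label-$2$ pair generates a $\Z^2$, and Theorem~\ref{thm:classify Z^2 subgrp} also requires hyperbolic type, so the intersection of the corresponding fixed-point sets needs a separate justification in this branch (in the paper this is absorbed into the claim that the Section~\ref{sec:embeddCycArtGrp} lemmas extend once all three images are known to be tree-elliptic via the two parabolic dihedrals).
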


\begin{proof}
    Let us assume by contradiction that there exists a group monomorphism $\phi\colon A_\Delta\to A_\Gamma$. Because $\girth\Gamma\ge4$, $A_\Gamma$ is two-dimensional, and so is $A_\Delta$, for otherwise $\phi(A_\Delta)$ would contain a subgroup isomorphic to~$\Z^3$. Let $\ver\Delta=\{a,b,c\}$: because $A_\Delta$ is two dimensional, at most one between $m_{ab}$, $m_{bc}$ and $m_{ac}$ can be a~$2$, say $m_{ac}$.  Because $\girth\Gamma\ge4$ (in fact, because $\Gamma$ has no \emph{euclidean triangles}), each of the subgroups $\phi(\Span {a,b})$ and $\phi(\Span{b,c})$ is conjugated into a dihedral parabolic subgroup of~$A_\Gamma$~\cite{vaskou2022isomorphism}*{Corollary 3.12}. It follows that $\phi(a)$, $\phi(b)$ and $\phi(c)$ act elliptically on $C_\Gamma$ and the lemmas of Section~\ref{subsec:cycStdTrsFromCycArt} verbatim extend to this case. In particular, $\phi(a)$, $\phi(b)$ and $\phi(c)$ act tree-elliptically on $C_\Gamma$ and their fixed-point sets form a cycle of three standard trees, contradicting Lemma~\ref{lem:lenCycStdTrs}.
\end{proof}
    
\begin{rem}
    The proof of Lemma~\ref{lem:gir3VsGir4} relies on a particular case of Vaskou's characterisation of dihedral Artin subgroups of two-dimensional Artin groups~\cite{vaskou2022isomorphism}*{Theorem D}. More precisely, we use that all subgroups of a triangle-free Artin group that are abstractly isomorphic to a dihedral Artin group are conjugated into a dihedral \emph{parabolic} subgroup. Remarkably, the proof of Lemma~\ref{lem:girIsoInv} does not rely on this classification. 
\end{rem}

\begin{proof}[Proof of Theorem~\ref{thm:girIsoInv}]
    Again Example~\ref{ex:cannEmbedd} provides a labelled cycle $C_{\girth\Gamma}$ and an embedding $A_{C_{\girth\Gamma}}\to A_\Gamma$. If $\girth\Gamma=3$, then it is minimal by definition and there is nothing to prove. If $\girth\Gamma=4$, then Lemma~\ref{lem:gir3VsGir4} rules out the possibility of embedding any $A_{C_3}$. If $\girth\Gamma\ge5$, then $A_\Gamma$ is two dimensional of hyperbolic type and Lemma~\ref{lem:girIsoInv} applies.
\end{proof}

\subsection{Characterisation of weighted girth} Let us now turn to the weighted girth case. Recall that we restrict to two-dimensional and hyperbolic-type Artin groups.

\begin{proof}[Proof of Theorem~\ref{thm:wgIsoInv}]
    As observed in the proof of Theorem~\ref{thm:girIsoInv}, we can assume that $\Gamma$ is not a tree. Example~\ref{ex:cannEmbedd} provides a labelled cycle $C_{\wg\Gamma}$ and an embedding $R_{C_{\girth\Gamma}}\to A_\Gamma$. In order to show that this embedding is minimal, let $n\in\N_{\ge3}$, let $C_n$ be a cycle graph and assume there is a group monomorphism $\phi\colon R_{C_n}\to A_\Gamma$. Let $\L$ be the cycle of standard trees associated with~$\phi$. By Lemma~\ref{lem:cycStdTrsAssToRaag}, $n\ge \wg{\mathscr L}$; by Lemma~\ref{lem:wgCycStdTrs}, $\wg\L\ge\wg\Gamma$.
\end{proof}

\section{Girth of the commutation graph}
\label{sec:girthCrvGrph}

The commutation graph of an Artin group was introduced by Hagen-Martin-Sisto and, in the two-dimensional hyperbolic-type case, it plays the role of the curve curve graph for mapping class groups~\cite{hagen2024extra}. When $\Gamma$ is furthermore without leaves, it coincides with the intersection graph introduced by Huang-Osajda-Vaskou~\cite{huang2024rigidity}. \medskip 

Let $\Gamma$ be a labelled graph; two vertices $a$ and $b$ are conjugate to each other in $A_\Gamma$ if and only if there is an odd-labelled path in $\Gamma$ between $a$ and $b$~\cite{paris1997parabolic}*{Corollary 4.2}. Let $\operatorname{V}_{\textup{odd}}(\Gamma)$ denote the quotient set of $\ver\Gamma$ under the equivalence relation of conjugation. 
Throughout the section we assume that $A_\Gamma$ is a two-dimensional Artin group of hyperbolic type.

\begin{defi}[commutation graph]
    Let us define the set 
    \[
    \mathcal H_\Gamma = \{\norm{A_\Gamma}{a}:a\in\operatorname{V}_{\textup{odd}}(\Gamma)\}\cup\{\Span{b,c}:\{b,c\}\in\edge\Gamma\text{ and }m_{bc}\ge3\};
    \]
    the \emph{(algebraic) commutation graph} of $\Gamma$ is the graph $Y_\Gamma$ with vertices
    \[
    \bigsqcup_{H\in\mathcal H_\Gamma}A_\Gamma/H
    \]
    and two vertices $g{G}$ and $h{H}$ are adjacent if $\conj {\operatorname{Z}(G)} g$ and $\conj {\operatorname{Z}(H)} h$ elementwise commute.  
\end{defi}

The group $A_\Gamma$ acts on the vertices of $Y_\Gamma$ by left multiplication, which induces a simplicial action on $Y_\Gamma$. Note that the adjacency condition justifies the name \emph{commutation graph}. \par
Recall that a \emph{leaf} of a graph is a vertex of degree one. We are going to show the following equality.

\begin{thm}
    \label{thm:girthCrvGrph}
    If $\Gamma$ has no leaves, then $\girth{Y_\Gamma}=\wg\Gamma$.
\end{thm}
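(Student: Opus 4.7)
The plan is to establish the two inequalities $\girth{Y_\Gamma}\le\wg\Gamma$ and $\girth{Y_\Gamma}\ge\wg\Gamma$ by setting up a geometric dictionary between vertices of $Y_\Gamma$ and objects in~$D_\Gamma$, and then translating cycles in $Y_\Gamma$ into cycles of standard trees. Under this dictionary, a vertex $gN_{A_\Gamma}(a)$ of the first type corresponds to the standard tree $g\cdot T_a$ (since $\operatorname{Z}(N_{A_\Gamma}(a))=\Span a$), and a vertex $g\Span{b,c}$ of the second type corresponds to the type-two vertex $g\Span{b,c}\in D_\Gamma$ of label~$m_{bc}\ge3$. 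Using Theorem~\ref{thm:classify Z^2 subgrp} and the hyperbolic-type hypothesis, two distinct type-(a) vertices are adjacent in $Y_\Gamma$ precisely when the corresponding standard trees share a type-two vertex of label~$2$; a type-(a) vertex is adjacent to a type-(b) vertex precisely when the tree contains the type-two vertex; and two distinct type-(b) vertices are \emph{never} adjacent, because the centralizer of the center of $\Span{b,c}$ (with $m_{bc}\ge3$) equals $\Span{b,c}$ and this center is vertex-elliptic with unique fixed point $\Span{b,c}$, forcing two adjacent type-(b) vertices to coincide.

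For the upper bound, I would take a cycle subgraph $\gamma=(a_1,\dots,a_k)\subseteq\Gamma$ realizing $\wg\gamma=\wg\Gamma$ and build an embedded loop in $Y_\Gamma$ by placing the type-(a) vertices $N_{A_\Gamma}(a_1),\dots,N_{A_\Gamma}(a_k)$ cyclically and, for every edge $\{a_i,a_{i+1}\}$ of $\gamma$ with label at least~$3$, inserting the type-(b) vertex $\Span{a_i,a_{i+1}}$ between $N_{A_\Gamma}(a_i)$ and $N_{A_\Gamma}(a_{i+1})$. The required adjacencies are immediate, the vertices are distinct by injectivity of the dictionary, and the total length is $k+\lvert\{e\in\edge\gamma:m_e\ge3\}\rvert=\wg\gamma=\wg\Gamma$.

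For the lower bound, take a shortest cycle $C$ in $Y_\Gamma$, say of length $n$; it is automatically induced. Let $k$ be the number of type-(a) vertices and $\ell=n-k$ the number of type-(b) vertices of $C$. Since type-(b) vertices cannot be consecutive, they appear isolated along $C$; a short case analysis (using that two distinct standard trees meet in at most one vertex) rules out $k\le2$, so $k\ge3$. Through the dictionary, the type-(a) vertices give $k$ pairwise distinct standard trees $T_1,\dots,T_k$ with $T_i\cap T_{i+1}\ne\varnothing$, the intersection having label~$2$ or label at least~$3$ depending on whether the relevant arc of $C$ is a direct edge or passes through a type-(b) vertex. The triple-intersection condition $T_{i-1}\cap T_i\cap T_{i+1}=\varnothing$ is verified by contradiction: a coincidence $T_{i-1}\cap T_i=T_i\cap T_{i+1}=w$ would force either a chord in $C$ (if $w$ has label~$2$, giving a direct edge between the type-(a) vertices for $T_{i-1}$ and $T_{i+1}$) or a repeated type-(b) vertex in $C$ (if $w$ has label at least~$3$), contradicting that $C$ is induced; mixed cases produce the same vertex being both label~$2$ and label at least~$3$. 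Hence $\{T_i\}_{i\in\Z_k}$ is a genuine cycle of standard trees, and its weighted girth is exactly $k+\ell=n$ because every type-(b) vertex of $C$ contributes precisely one label-$\ge3$ intersection vertex. Applying the hyperbolic-type case of Lemma~\ref{lem:wgCycStdTrs} gives $n\ge\wg\Gamma$.

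The main obstacle is the verification of the cycle-of-standard-trees axioms for the extracted trees, in particular the no-triple-intersection condition; this is where the induced-ness of $C$ is essential, and where the no-leaves hypothesis is used to ensure that $Y_\Gamma$ faithfully encodes the intersection pattern of standard trees and type-two vertices of~$D_\Gamma$ so that the dictionary above behaves as advertised.
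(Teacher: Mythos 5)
Your proposal is correct and follows essentially the same route as the paper: the paper factors your ``dictionary'' through an explicit intermediate object (the geometric commutation graph $\curve\Gamma$, shown $A_\Gamma$-equivariantly isomorphic to $Y_\Gamma$ using the no-leaves hypothesis exactly where you use it, namely to get $\operatorname{Z}(\norm{A_\Gamma}{a})=\Span a$), and then matches simple loops there with cycles of standard trees before invoking Lemma~\ref{lem:wgCycStdTrs} and the canonical cycles of Example~\ref{ex:canonCycStdTreesWG}, just as you do. The only places where you are slightly terser than the paper are the injectivity of the vertex dictionary and the ``adjacency iff label-$2$ intersection'' equivalence for tree-type vertices, but these are details the paper itself handles briefly.
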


\begin{rem}[algebraic characterisation of~$Y_\Gamma$]
    Under the hypothesis of Theorem~\ref{thm:girthCrvGrph}, i.e. when $A_\Gamma$ is a two-dimensional Artin group of hyperbolic type and $\Gamma$ has no leaves, the algebraic commutation graph~$Y_\Gamma$ is algebraically characterised, hence an isomorphism invariant. Indeed, the subgroups
    \[
    \{\norm{A_\Gamma}a:a\in\ver\Gamma\}\cup\{\Span{b,c}:\{b,c\}\in\edge\Gamma\text{ with }m_{bc}\ge3\}
    \]
    coincide, up to conjugation, with the inclusion-wise maximal subgroups of $A_\Gamma$ that virtually split as a non-trivial direct product, as a consequence of the fact that $A_\Gamma$ is acylindrically hyperbolic~\cite{martin2022acylindrical}*{Theorem A}.
\end{rem}

For our purposes, we define a different graph, the \emph{geometric} commutation graph, and then show that the latter is equivariantly isomorphic to the algebraic commutation graph, when the hypothesis of Theorem~\ref{thm:girthCrvGrph} is satisfied (i.e. when $\Gamma$ has no leaves). 

\begin{defi}[geometric commutation graph]
    The \emph{geometric commutation graph} of $\Gamma$ is the simplicial graph $\curve\Gamma$ whose vertices come in two types
    \[
    \{g\cdot T_a: a\in\ver\Gamma,g\in A_\Gamma\}\cup\{h\cdot\Span{b,c}:\{b,c\}\in\edge\Gamma, m_{bc}\ge3 \text{ and }h\in A_\Gamma\}
    \]
    (every vertex is either a standard tree in $D_\Gamma$ or a type-$2$ vertex of label at least~$3$). Assume that the distinct standard trees $g\cdot T_a$ and $h\cdot T_b$ intersect. Then they intersect at a single type-$2$ vertex $f\cdot\Span{c,d}$:
    \begin{itemize}
        \item if $m_{cd}=2$, then there is an edge in $\curve\Gamma$ between $g\cdot T_a$ and $h\cdot T_b$ (note that $f\cdot \Span{c,d}$ is not a vertex of $\curve\Gamma$);
        \item if $m_{cd}\ge3$, then $f\cdot \Span{c,d}$ is adjacent in $\curve \Gamma$ to $g \cdot T_a$ and $h\cdot T_b$.
    \end{itemize}
\end{defi}

The group $A_\Gamma$ acts on $\curve\Gamma$ by left multiplication and the action preserves the partition of the vertices. \par

We introduce the following nomenclature of vertices of $\curve\Gamma$:
\begin{itemize}
    \item \emph{tree-type} vertices are those of the form $g \cdot T_a$;
    \item \emph{dihedral-type} vertices are those of the form $h\cdot\Span{b,c}$.
\end{itemize}

Let $\cst\Gamma$ denote the set of cycles of standard trees in $D_\Gamma$ and let $\sl\Gamma$ denote the set of simple loop in $\curve\Gamma$.

\begin{lem}
    \label{lem:girGeomCommGrph}
     There is a bijection $f\colon\cst\Gamma\to\sl\Gamma$ such that, for every cycle of standard trees $\mathscr L\in\cst\Gamma$, $\girth{f(\mathscr L)}=\wg{\mathscr L}$. \par
     In particular, $\wg\Gamma=\girth{\curve\Gamma}$.
\end{lem}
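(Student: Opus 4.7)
The plan is to build the bijection $f$ explicitly by decorating each cycle of standard trees with its label-$\ge 3$ intersection vertices, and then derive $\wg\Gamma = \girth{\curve\Gamma}$ by combining this length-preserving correspondence with Lemma~\ref{lem:wgCycStdTrs} and Example~\ref{ex:canonCycStdTreesWG}. Given $\mathscr L = \{T_i\}_{i \in \Z_n} \in \cst\Gamma$ with intersection vertices $v_i \coloneqq T_i \cap T_{i+1}$, I set $f(\mathscr L)$ to be the cyclic walk in $\curve\Gamma$ through the tree-type vertices $T_0, T_1, \dots, T_{n-1}$, inserting $v_i$ as a dihedral-type vertex between $T_i$ and $T_{i+1}$ precisely when $\lab{v_i} \ge 3$. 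Each elementary step is an edge of $\curve\Gamma$ by its defining adjacency rules, and the resulting walk has length $n + \abs{\{i : \lab{v_i} \ge 3\}} = \wg{\mathscr L}$.

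Next I verify that $f(\mathscr L)$ is a simple loop. The tree-type vertices $T_i$ are distinct by the very definition of a cycle of standard trees, and the inserted dihedral-type vertices $v_i$ are pairwise distinct because two distinct standard trees meet in at most one type-$2$ vertex~\cite{martin2022acylindrical}*{Remark 4.4} and the constraint $T_{k-1} \cap T_k \cap T_{k+1} = \varnothing$ precludes indirect coincidences between the $v_i$'s. Conversely, given $L \in \sl\Gamma$, I extract the subsequence of tree-type vertices; since dihedral-type vertices form an independent set in $\curve\Gamma$, consecutive tree-type vertices in $L$ are either directly joined or joined through a single dihedral-type vertex, and in both cases they intersect. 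For the condition $T_{i-1} \cap T_i \cap T_{i+1} = \varnothing$, a common triple intersection point $w$ would have to equal both $v_{i-1}$ and $v_i$: if $\lab{w} \ge 3$, then $w$ appears twice in $L$, contradicting simplicity; if $\lab{w} = 2$, then the classification of conjugates of standard generators lying in a dihedral parabolic subgroup (e.g.~\cite{blufstein2023parabolic}*{Theorem~1.1}) shows that only two standard trees pass through the type-$2$ vertex $w = g\Span{a,b}$, namely $g T_a$ and $g T_b$, contradicting distinctness of $T_{i-1}, T_i, T_{i+1}$.

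The ``in particular'' statement then follows at once: the length-preserving bijection gives
\[
\girth{\curve\Gamma} = \min_{L \in \sl\Gamma} \abs{L} = \min_{\mathscr L \in \cst\Gamma} \wg{\mathscr L},
\]
which is bounded below by $\wg\Gamma$ by Lemma~\ref{lem:wgCycStdTrs} and equals $\wg\Gamma$ because applying $f$ to the canonical cycle of standard trees determined by a cycle subgraph $\gamma \subseteq \Gamma$ with $\wg\gamma = \wg\Gamma$ produces a simple loop in $\curve\Gamma$ of length $\wg\Gamma$ (Example~\ref{ex:canonCycStdTreesWG}). The main obstacle I anticipate is the well-definedness of the inverse in the label-$2$ case: ruling out three distinct standard trees through a single label-$2$ type-$2$ vertex requires invoking the parabolic-subgroup classification, rather than simplicity of $L$ alone. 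Should the bookkeeping for non-simple cycles of standard trees prove awkward, Lemma~\ref{lem:simSubCyc} and Lemma~\ref{lem:wg simp < wg} allow one to restrict to simple cycles of standard trees without loss of generality for the length-minimum comparison underlying the ``in particular'' conclusion.
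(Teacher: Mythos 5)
Your construction is essentially the paper's own: the paper first shows that a loop in $\curve\Gamma$ is uniquely determined by its sequence of tree-type vertices, then defines the same map $f$ (inserting a dihedral-type vertex exactly at the label-$\ge3$ intersections) and the same inverse on tree-type subsequences, and obtains $\wg\Gamma=\girth{\curve\Gamma}$ from Lemma~\ref{lem:wgCycStdTrs} together with Example~\ref{ex:canonCycStdTreesWG}, exactly as you do. Your treatment of the triple-intersection condition when the common vertex has label~$2$ (ruling out three standard trees through such a vertex via the classification of conjugates of standard generators in a dihedral parabolic subgroup) is if anything more explicit than the paper's, which handles that step only through the ``backtracking dihedral-type vertex'' observation that literally applies when the common vertex has label at least~$3$.
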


\begin{proof}
    The proof relies on the following observation:

    \begin{claim}
        \label{claim:loopsInCurve}
        Every  loop in $\curve\Gamma$ is uniquely determined by the (ordered) sequence of tree-type vertices it traverses.
    \end{claim}

    \begin{proof}[Proof of Claim~\ref{claim:loopsInCurve}]
        For a loop subgraph of $\curve\Gamma$, let $\{u_i\}_{i\in\Z_k}$ be the sequence of its tree-type vertices (up to the choice of an initial vertex). If $u_i$ and $u_{i+1}$ are at combinatorial distance $1$, then there is nothing to show. If they are at combinatorial distance~$2$, it means that the two standard trees corresponding to $u_i$ and $u_{i+1}$ intersect. Because distinct standard trees intersect in at most one vertex~\cite{martin2023tits}*{Remark 4.4}, the dihedral-type vertex adjacent to $u_i$ and $u_{i+1}$ is uniquely determined. Finally, $u_i$ and $u_{i+1}$ cannot be at combinatorial distance at least~$3$, for no two dihedral-type vertices can be adjacent in $\curve \Gamma$.
    \end{proof}

    For every cycle of standard trees $\L\subseteq D_\Gamma$, if $\L=\{T_i\}_{i\in\Z_k}$, then we define $f(\L)$ to be the unique loop in $\curve\Gamma$ determined by the tree-type vertices $\{T_i\}_{i\in\Z_k}$. By Claim~\ref{claim:loopsInCurve}, the definition of $f$ is well posed. Furthermore, because a cycle of standard trees consists of distinct trees, the image of $f$ is contained in the subset of simple loops in $\curve\Gamma$. Also, for every $\L\in\cst\Gamma$, $\wg\L=\girth{f(\L)}$ as the dihedral-type vertices of $\curve\Gamma$ correspond to type-$2$ vertices of $D_\Gamma$ with label at least~$3$.\par
    Let us define an inverse $g\colon\sl\Gamma\to\cst\Gamma$ for $f$ in the following way. For a simple loop $\gamma\in\sl\Gamma$, let $\{T_i\}_{i\in\Z_k}$ be the sequence of its tree-type vertices: we define $g(\gamma)=\{T_i\}_{i\in\Z_k}$. Let us check that $g(\gamma)$ is indeed a cycle of standard trees (recall Definition~\ref{def:cycStdTree}). Because $\gamma$ is simple, the $T_i$'s are all distinct. Two consecutive tree-type vertices $T_i$ and $T_{i+1}$ are at combinatorial distance at most~$2$: if they are adjacent, then the standard trees $T_i$ and $T_{i+1}$ intersect in a type-$2$ vertex of label~$2$; if they are at combinatorial distance~$2$, then the standard trees $T_i$ and $T_{i+1}$ intersect in a type-$2$ vertex of label at least~$3$. In particular, $T_{i}\cap T_{i+1}$ is non-empty. Finally, to show that $T_{i-1}\cap T_i\cap T_{i+1}=\varnothing$, notice that, if the three distinct standard trees intersected in a single vertex, then such vertex would correspond to a backtracking dihedral-type point in $\gamma$, against the assumption that $\gamma$ is simple. This show that $g$ is well defined. By construction, $f$ and $g$ are each other's inverses.
\end{proof}

Note that Lemma~\ref{lem:girGeomCommGrph} holds without assuming that $\Gamma$ has no leaves.

\begin{lem}
    \label{lem:isomAlgGeoCommGrph}
    Assume that $\Gamma$ has no leaves. There is an $A_\Gamma$-equivariant graph isomorphism between $Y_\Gamma$ and $\curve\Gamma$.
\end{lem}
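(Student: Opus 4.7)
The plan is to construct an explicit $A_\Gamma$-equivariant map $\Phi\colon Y_\Gamma\to\curve\Gamma$ on vertices by setting
\[
\Phi(g\cdot\norm{A_\Gamma}{a})\coloneqq g\cdot T_a\quad\text{and}\quad\Phi(g\cdot\Span{b,c})\coloneqq g\cdot\Span{b,c},
\]
where on the right the first is the standard tree of~$a$ and the second is the type-$2$ vertex of~$D_\Gamma$ corresponding to the coset. This is well defined because $\norm{A_\Gamma}{a}$ coincides with the setwise stabiliser of~$T_a$ and $\Span{b,c}$ coincides with the stabiliser of the type-$2$ vertex~$\Span{b,c}$, and it is tautologically $A_\Gamma$-equivariant. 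For bijectivity on vertex sets, surjectivity is immediate: every standard tree is a translate of $T_a$ for some generator~$a$, which is conjugate to a unique representative of $\operatorname{V}_{\textup{odd}}(\Gamma)$, and every type-$2$ vertex of label at least~$3$ is a translate of some $\Span{b,c}$. For injectivity, the key inputs are that distinct classes in $\operatorname{V}_{\textup{odd}}(\Gamma)$ parametrise distinct $A_\Gamma$-orbits of standard trees and that distinct edges of label at least~$3$ yield distinct orbits of type-$2$ vertices, the latter because $K_\Gamma$ is a strict fundamental domain and because the stabiliser of a type-$2$ vertex uniquely determines the corresponding dihedral parabolic subgroup.

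The main technical step is to show that $\Phi$ preserves and reflects adjacencies, which I would split into three cases. For the tree--tree case I would invoke the leafless hypothesis precisely to ensure $\operatorname{Z}(\norm{A_\Gamma}{a})=\Span{a}$ for every standard generator~$a$: in the decomposition $\norm{A_\Gamma}{a}=\Span{a}\times F_a$ recalled in Section~\ref{subsec:hyp-type}, the absence of leaves guarantees that $F_a$ has rank at least~$2$ (two distinct neighbours contribute elements that do not commute modulo $\Span a$, by two-dimensionality), so its centre is trivial. With this in hand, adjacency in $Y_\Gamma$ between two tree-type vertices amounts to commutation of $\conj{a}{g}$ with $\conj{b}{h}$, and Theorem~\ref{thm:classify Z^2 subgrp} places the resulting $\Z^2$ subgroup either in a common dihedral parabolic or in the centraliser of a conjugate of a standard generator; a case analysis using Lemma~\ref{lem:dih_par_are_max} together with the rigidity of commuting conjugates of standard generators in a dihedral Artin group of label at least~$3$ forces $gT_a\cap hT_b$ to be a type-$2$ vertex of label~$2$, which is exactly the adjacency condition in $\curve\Gamma$. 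The converse is immediate: a common label-$2$ type-$2$ vertex places both elements in the same $\Z^2$ dihedral parabolic.

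The tree--dihedral case is more direct: the adjacency between $g\norm{A_\Gamma}{a}$ and $f\Span{b,c}$ in $Y_\Gamma$ translates, via the identity $\centr{A_\Gamma}{\conj{z_{bc}}{f}}=\conj{\Span{b,c}}{f}$ (where $z_{bc}$ generates the centre of $\Span{b,c}$), to the condition $\conj{a}{g}\in f\Span{b,c}f^{-1}$, equivalently $f\Span{b,c}\in gT_a$; in $\curve\Gamma$ the same containment characterises adjacency, using that the distinct trees $fT_b$ and $fT_c$ always provide an intersecting partner for any $gT_a$ through $f\Span{b,c}$. Finally, the dihedral--dihedral case is vacuous in both graphs: commutation of the central generators of two distinct dihedral parabolics would force a non-trivial element to fix two distinct type-$2$ vertices, hence the whole geodesic between them and in particular an interior type-$0$ vertex, whose stabiliser is trivial. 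The main obstacle is the tree--tree analysis, since it is the only place where the leafless hypothesis is genuinely used and the only place where non-trivial combinatorics with dihedral parabolics enters.
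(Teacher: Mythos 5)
Your proposal is correct and relies on the same essential ingredients as the paper's proof, but it is organised differently. The paper factors the isomorphism through an intermediate graph $X_\Gamma$ whose vertices are the \emph{conjugates} of the subgroups in $\mathcal H_\Gamma$ (rather than their cosets), proving separately that $Y_\Gamma\cong X_\Gamma$ (via self-normalisation of $\norm{A_\Gamma}{a}$ and of non-abelian dihedral parabolics, plus the height homomorphism to see that conjugate normalisers force conjugate generators) and that $X_\Gamma\cong\curve\Gamma$ (via $x\mapsto\stab{A_\Gamma}{x}$ with inverse $H\mapsto\fix{\operatorname{Z}(H)}$, the leafless hypothesis entering exactly as in your argument, to guarantee $\operatorname{Z}(\norm{A_\Gamma}{a})=\Span{a}$). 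You instead define the composite map directly on cosets; the injectivity inputs you cite are the same ones the paper uses. Where you genuinely add content is the adjacency analysis: the paper asserts that its map is a graph homomorphism ``by construction'' and then only checks bijectivity, whereas you explicitly verify that adjacency is both preserved and reflected, invoking Theorem~\ref{thm:classify Z^2 subgrp} together with the non-commutation of conjugates of the two generators inside a dihedral parabolic of label at least~$3$ to pin down the tree--tree case, and the existence of the partner trees $fT_b$, $fT_c$ through a dihedral-type vertex to match the two notions of adjacency in the tree--dihedral case. That extra care addresses a direction the paper leaves implicit.

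One small correction: in the dihedral--dihedral case, the geodesic between two distinct type-$2$ vertices need not contain a type-$0$ vertex (it may cross a single triangle, or pass through a type-$1$ vertex), so that step as written does not go through. The intended contradiction is more immediate: the central generator $z'$ of a dihedral parabolic of label at least~$3$ is vertex-elliptic, and by the classification of elliptic elements its fixed-point set is \emph{precisely} one type-$2$ vertex; if it commuted elementwise with the centre of a second, distinct dihedral parabolic, it would lie in the centraliser of that centre, hence in that parabolic, hence fix a second type-$2$ vertex, which is impossible. With that repair the case is vacuous in both graphs, as you claim.
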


\begin{proof}
    We first define an intermediate graph $X_\Gamma$ in the way that follows and see that $Y_\Gamma$ and $\curve\Gamma$ are both $A_\Gamma$-equivariantly isomorphic to~$X_\Gamma$. Vertices of $X_\Gamma$ are $A_\Gamma$-conjugates of
    \[
    \{\norm{A_\Gamma}{a}:a\in\ver\Gamma\}\cup\{\Span{b,c}:\{b,c\}\in\edge\Gamma\text{ and }m_{bc}\ge3 \},
    \]
    where the vertices $\conj G g$ and $\conj H h$ are adjacent if the corresponding centres elementwise commute. Then $X_\Gamma$ is an $A_\Gamma$-graph when endowed with the action of $A_\Gamma$ on $\ver{X_\Gamma}$ by conjugation.

    \begin{claim}\label{claim:XisomY}
        The graphs $X_\Gamma$ and $Y_\Gamma$ are $A_\Gamma$-equivariantly isomorphic.
    \end{claim}

    \begin{proof}[Proof of Claim~\ref{claim:XisomY}]
        Let $f\colon Y_\Gamma\to X_\Gamma$ be defined as follows: for every $gG\in\ver{Y_\Gamma}$, $f(gG)=\conj G g$. By construction, $f$ is an $A_\Gamma$-equivariant graph homomorphism. It is also straightforward to see that $f$ is surjective. \par
        Let us show that $f$ is injective. Assume by contradiction that there are cosets $gG$ and $hH$ such that $\conj G g=\conj H h$. Because $A_\Gamma$ is two-dimensional, normalisers of standard generators are of the form $\Z\times F$ (for some finite-rank free group $F$~\cite{martin2022acylindrical}), while non-abelian dihedral Artin groups are not RAAGs, it follows that $G$ and $H$ are either both the normaliser of a standard generator or both dihedral parabolic subgroups. Distinct spherical-type dihedral standard parabolic subgroups are never conjugate to each other, while every spherical-type non-abelian dihedral parabolic subgroup is self-normalising~\cite{godelle2007artin}*{Corollary 4.12}. Therefore we can assume that $G=\norm{A_\Gamma}a$ and $H=\norm{A_\Gamma}b$ for some $a,b\in\ver\Gamma$. If $\norm{A_\Gamma}a$ and $\norm{A_\Gamma}b$ are conjugate, then so are their centres and, by means on the height homomorphism $A_\Gamma\to\Z$ that maps every standard generator to~$1$, one shows that $a$ and $b$ must be conjugate. It follows that there is an odd-labelled path between $a$ and $b$ in $\Gamma$, meaning that $a=b$ in $\operatorname{V}_{\textup{odd}}(\Gamma)$. It follows that $gh^{-1}$ normalises $\norm{A_\Gamma}a$, which is self-normalising, whence follows that $g\norm{A_\Gamma}a=h\norm{A_\Gamma}b$ (to see that $\norm{A_\Gamma}a$ is self-normalising, observe that the latter splits as a direct product $\Span a\times F$, where $\Span a$ is central, hence characteristic~\cite{hagen2024extra}*{Lemma 2.27}).
    \end{proof}

Note that the the isomorphism between $X_\Gamma$ and $Y_\Gamma$ holds in general, without the hypothesis of leafless defining graph. Let us now show that $\curve\Gamma$ and $X_\Gamma$ are $A_\Gamma$-equivariantly isomorphic. 

    \begin{claim}\label{claim:XisomCurve}
        Let us assume that $\Gamma$ has no leaves. The graphs $X_\Gamma$ and $\curve\Gamma$ are $A_\Gamma$-equivariantly isomorphic.
    \end{claim}

    \begin{proof}[Proof of Claim~\ref{claim:XisomCurve}]
        Let us define $\phi\colon\curve\Gamma\to X_\Gamma$ as follows: for every $x\in\ver{\curve\Gamma}$, $\phi(x)=\stab{A_\Gamma}{x}$. In particular, for tree-type vertices we have $\phi(g\cdot T_a) =\conj{\norm{A_\Gamma}{a}}{g}$~\cite{hagen2024extra}*{Lemma 2.27}, while for dihedral-type vertices we have $\phi(g\cdot\Span{a,b})=\conj{\Span{a,b}}g$. By construction, $\phi$ is an $A_\Gamma$-equivariant graph homomorphism and it is straightforward to check that it is surjective. \par 
        In order to show that $\phi$ is injective, we construct a left inverse. Let us define $\psi\colon X_\Gamma\to\curve\Gamma$ as follows: for every vertex $H$ of $X_\Gamma$, let $\psi(H)=\fix{\operatorname{Z}(H)}$. Let us show that $\psi$ is well defined. Up to conjugation, $H$ is of the form $\norm{A_\Gamma}a$ or $\Span{a,b}$. \par
        If $H=\norm{A_\Gamma}a$, then it is of the form $\Span{a}\times F_k$ and, because $\Gamma$ has no leaves, $k\ge2$. It follows that the centre of $\norm{A_\Gamma}a$ coincides with $\Span a$, which acts elliptically on $D_\Gamma$, fixing the standard tree $T_a$.\par
        If $H=\Span{a,b}$, then $H$ itself acts elliptically on $D_\Gamma$, fixing the type-$2$ vertex $\Span{a,b}$.\par
        By construction, $\psi\circ\phi=\operatorname{id}_{\curve\Gamma}$, showing that $\phi$ is injective.
    \end{proof}

    Combining Claim~\ref{claim:XisomY} and Claim~\ref{claim:XisomCurve} gives the required isomorphism.
\end{proof}

\begin{proof}[Proof of Theorem~\ref{thm:girthCrvGrph}]
    By Lemma~\ref{lem:girGeomCommGrph}, $\wg\Gamma=\girth{\curve\Gamma}$; by Lemma~\ref{lem:isomAlgGeoCommGrph}, $\curve{\Gamma}$ and $Y_\Gamma$ are isomorphic graphs and hence $\girth{\curve\Gamma}=\girth{Y_\Gamma}$.
\end{proof}

\subsection{Realising cycle embeddings} We end this section with an open problem. Let $\L\subseteq D_\Gamma$ be a cycle of standard trees, which we can view as a simple loop $\ell\subseteq\curve\Gamma$ by Lemma~\ref{lem:girGeomCommGrph}. Claim~\ref{claim:loopsInCurve} shows that $\ell$ is uniquely determined by the set of its tree-type vertices, let it be $\{T_{i}\}_{i\in\Z_n}$. The pointwise stabiliser of $T_i$ is infinite cyclic generated, say generated by $g_i$.

\begin{question}
    \label{quest:classifyCycArtSubGrp}
    When is the subgroup of $A_\Gamma$ generated by $\{g_i\}_{i\in\Z_n}$ a cycle Artin group?
\end{question}

The dihedral-type vertices of $\ell$ correspond to type-two vertices $v_{j}=T_j\cap T_{j+1}$ of $D_\Gamma$ with label at least~$3$. The stabiliser of $v_j$ is a spherical irreducible dihedral Artin group, therefore its centre in infinite cyclic, say generated by~$z_j$.

\begin{question}
    \label{quest:classifyCycRaagSubGrp}
    Under which conditions does there exist $k\in\N_{\ge2}$ such that the subgroup of $A_\Gamma$ generated by the $g^k_i$'s and the $z^k_j$'s is a cycle RAAG?
\end{question}

Remarkably, results of Jankiewicz-Schreve~\cite{jankiewicz2022right} and Oh-Park~\cite{oh2025embedability} go in this direction.

\section{Rigidity of cycle Artin groups}
\label{sec:rigiCycArtGrp}

We use the algebraic characterisation of girth of Theorem~\ref{thmintro:girIsoInv} to show the rigidity of Artin groups based on a cycle.

\begin{thm}
    \label{thm:rigidCycArtGrp}
    Let $\Gamma$ and $\Lambda$ be labelled cycle graphs; if $A_\Gamma\cong A_\Lambda$, then $\Gamma\cong\Lambda$.
\end{thm}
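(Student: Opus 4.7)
The plan is to first invoke Theorem~\ref{thm:girIsoInv} to deduce that $\girth\Gamma=\girth\Lambda$; since both $\Gamma$ and $\Lambda$ are cycle graphs, this forces them to be cycles on the same number $n$ of vertices. It then remains to show that the cyclic sequences of edge-labels of $\Gamma$ and $\Lambda$ agree up to rotation and reflection.

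In the generic case $n\ge 5$, both $A_\Gamma$ and $A_\Lambda$ are two-dimensional of hyperbolic type. Given an isomorphism $\phi\colon A_\Lambda\to A_\Gamma$ with standard generators $u_0,\dots,u_{n-1}$ of $A_\Lambda$, I would apply Lemma~\ref{lem:mono->cycle} to produce a cycle of $k$ standard trees in $D_\Gamma$ with $3\le k\le n$; Lemma~\ref{lem:lenCycStdTrs} forces $k\ge\girth\Gamma=n$, so $k=n$ and every $\phi(u_i)$ acts tree-elliptically. The ``moreover'' clause of Lemma~\ref{lem:lenCycStdTrs} then produces a cycle subgraph $\gamma\subseteq\Gamma$ of girth~$n$ whose canonical cycle of standard trees (Example~\ref{ex:canonCycStdTrees}) coincides, up to an $A_\Gamma$-translation, with the cycle coming from $\phi$. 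Since $\Gamma$ is itself an $n$-cycle, $\gamma=\Gamma$, and after post-composing $\phi$ with an inner automorphism of $A_\Gamma$ I may assume $\fix{\phi(u_i)}=T_{a_i}$ for an enumeration $a_0,\dots,a_{n-1}$ of $\ver\Gamma$ traversing $\Gamma$ cyclically, which already identifies the underlying unlabelled cycles via $u_i\mapsto a_i$.

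To upgrade to a labelled isomorphism, note that the pointwise stabiliser of $T_{a_i}$ is $\Span{a_i}$, so $\phi(u_i)=a_i^{k_i}$ for some $k_i\in\Z\setminus\{0\}$, and hence $\phi(\Span{u_i,u_{i+1}})\le\Span{a_i,a_{i+1}}$. Writing $\ell=m^\Lambda_{u_iu_{i+1}}$ and $m=m^\Gamma_{a_ia_{i+1}}$, I would argue by a short case analysis: when $\ell,m\ge 3$, applying $\phi^{-1}$ gives $\Span{u_i,u_{i+1}}\le\phi^{-1}(\Span{a_i,a_{i+1}})\cong\da m$, and Lemma~\ref{lem:dih_par_are_max} (used in $A_\Lambda$) forces equality, whence $\ell=m$; the non-abelian/abelian dichotomy between $\da\ell$ and $\Z^2$ rules out $\ell\ge3,m=2$; and $\ell=2,m\ge3$ is excluded by the fact that two nontrivial powers of distinct standard generators of $\da m$ do not commute (as one checks via the Coxeter quotient, and via the faithful $\operatorname{SL}_2(\Z)$-representation of $\da 3$ and its analogues for $m\ge 4$). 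Only $\ell=m=2$ remains, and so $\ell=m$ in all cases, finishing the argument when $n\ge 5$.

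The main obstacle is the small-girth regime $n\in\{3,4\}$, where Lemma~\ref{lem:lenCycStdTrs}'s ``moreover'' clause is stated only for $n\ge 5$ and $A_\Gamma$ need not be of hyperbolic type. For $n=4$, the $(2,2,2,2)$-square is isolated from all other labelled $4$-cycles by the isomorphism invariance of being a RAAG together with Droms' rigidity theorem~\cite{droms1987isomorphisms}; the remaining $4$-cycles contain at least one label $\ge 3$ and are two-dimensional of hyperbolic type, in which case the strict negativity of the type-$0$ curvature estimate from the proof of Lemma~\ref{lem:lenCycStdTrs} still holds (since the local configurations forbidden by hyperbolicity coincide with those that would make type-$0$ curvature non-negative), and the ``moreover'' clause extends to $n=4$, so the argument above adapts. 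For $n=3$, the dichotomy between spherical-type triangles (handled by the classical isomorphism rigidity of spherical-type Artin groups, combined with Proposition~\ref{prop:2d_iso_inv} to separate them from the two-dimensional case) and two-dimensional triangles (handled by Vaskou's isomorphism criterion~\cite{vaskou2022isomorphism}*{Theorem~E}, which recovers the set of labels with multiplicity and the local data at each vertex) gives the result in all remaining cases.
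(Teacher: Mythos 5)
Your reduction to equal girth via Theorem~\ref{thm:girIsoInv} and your treatment of the case $n\ge5$ match the paper's route (Lemma~\ref{lem:rigid_girth>=5} together with Lemma~\ref{lem:bound_fund_dom}): the cycle of standard trees bounds a single translate of the fundamental domain, and your label case analysis via Lemma~\ref{lem:dih_par_are_max} and the abelian/non-abelian dichotomy is a legitimate (slightly more explicit) version of what Lemma~\ref{lem:bound_fund_dom} does. The problem is the case $n=4$. Your claim that the ``moreover'' clause of Lemma~\ref{lem:lenCycStdTrs} extends to girth~$4$ for hyperbolic-type $4$-cycles is false, and the paper's own Lemma~\ref{lem:cyc_std_tre_4cyc} (see Figure~\ref{fig:4-cyc_std_tre_config}) exhibits the counterexamples: for a $(p,q,2,2)$-cycle a cycle of four standard trees may bound the union of two adjacent fundamental domains, and for a $(p,2,2,2)$-cycle it may bound two or even four. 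Your curvature heuristic fails on both metrics. In the cubical metric used in the proof of Lemma~\ref{lem:lenCycStdTrs}, a type-$0$ vertex whose link maps to a $4$-cycle of $\Gamma$ has curvature exactly $0$, not strictly negative, so nothing prevents several type-$0$ vertices. In the Moussong metric the type-$0$ curvature is indeed strictly negative for hyperbolic type, but an intersection vertex of label $m\ge3$ then contributes curvature up to $\pi-\pi/m>\pi/2$, so the total positive budget exceeds $2\pi$ and the uniqueness argument does not close. Consequently, even after Theorem~\ref{thm:girIsoInv}, you must rule out that, say, $A_{(p,2,2,2)}$ is isomorphic to $A_{(p,p,2,2)}$ or $A_{(p,p,p,p)}$; the paper does this with genuinely group-theoretic input (the isomorphism invariance of being of large type, and counting conjugacy classes of $\da p$-parabolic subgroups via self-normalisation), none of which appears in your proposal.

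The case $n=3$ is also thinner than it needs to be. Citing Vaskou's Theorem~E as ``recovering the set of labels with multiplicity'' does not obviously settle the euclidean-type triangles: the paper's own proof of Lemma~\ref{lem:rigid3cyc} needs Lemma~\ref{lem:labels_subset} (which requires the target to be of hyperbolic type), an abelianisation computation to separate $A_{(2,4,4)}$ from $A_{(2,3,6)}$, and a bespoke conjugacy argument to exclude $A_{(2,4,4)}\cong A_{(2,4,v)}$ for $v\ge5$. If Theorem~E alone yielded the full multiset of labels for all two-dimensional triangles, this extra work would be unnecessary, so you should either quote the precise statement of Theorem~E and check it covers the euclidean triangles, or supply arguments of the above kind. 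As written, the proposal is complete only for $n\ge5$.
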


If $A_\Gamma$ and $A_\Lambda$ are two isomorphic cycle Artin groups, then Theorem~\ref{thmintro:girIsoInv} yields that $\Gamma$ and $\Lambda$ are cycles of the same length. We shall split the proof of Theorem~\ref{thm:rigidCycArtGrp} into three cases, depending on whether such length is~$3$, $4$ or at least~$5$.\par
We will often exploit the following observation.
 
\begin{lem}
    \label{lem:bound_fund_dom}
    Let $\Gamma$ and $\Lambda$ be labelled $n$-cycle graphs, let $A_\Lambda$ be two dimensional of hyperbolic type and let $\phi\colon A_\Gamma\to A_\Lambda$ be a group isomorphism. If the cycle of standard trees associated with $\phi$ bounds a single translate of $K_\Lambda$, then $\Gamma\cong\Lambda$.
\end{lem}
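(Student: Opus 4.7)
The plan is to normalise $\phi$ using the hypothesis, describe its action on the standard generators of $A_\Gamma$ in terms of standard generators of $A_\Lambda$, and then match edge labels via dihedral-subgroup arguments.

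\textbf{Normalisation.} The boundary of the fundamental domain $K_\Lambda$ consists of the $n$ standard trees $\{T_{a_j}\}_{j\in\Z_n}$ associated to a cyclic enumeration $a_1,\dots,a_n$ of $\ver\Lambda$. Since the cycle of standard trees associated to $\phi$ bounds $gK_\Lambda$, it coincides with $\{gT_{a_{\sigma(i)}}\}_{i\in\Z_n}$ for some permutation $\sigma$; the consecutive-intersection condition forces $\sigma$ to preserve cyclic adjacency in $\Lambda$, hence to be a rotation possibly composed with a reflection. Replacing $\phi$ with $g^{-1}\phi(-)g$ and choosing the enumeration $a_1,\dots,a_n$ accordingly, I may assume that $\phi(u_i)$ fixes $T_{a_i}$ pointwise for every $i\in\Z_n$. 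Since the pointwise stabiliser of $T_{a_i}$ in $A_\Lambda$ is $\Span{a_i}$, this yields $\phi(u_i)=a_i^{k_i}$ for some $k_i\in\Z\setminus\{0\}$.

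\textbf{Matching labels.} Fix $i\in\Z_n$ and set $m=m_{u_i,u_{i+1}}$, $m'=m_{a_i,a_{i+1}}$. The subgroup $H_i:=\phi(\Span{u_i,u_{i+1}})=\Span{a_i^{k_i},a_{i+1}^{k_{i+1}}}$ is isomorphic to $\da m$ and is contained in the dihedral parabolic $\Span{a_i,a_{i+1}}\cong\da{m'}$. I claim $m=m'$. If $m\ge3$ and $m'=2$, the non-abelian group $\da m$ would embed into $\Z^2$, impossible. If $m=2$ and $m'\ge3$, then $a_i^{k_i}$ and $a_{i+1}^{k_{i+1}}$ would commute inside $\da{m'}$; but non-trivial powers of distinct standard generators of a non-abelian dihedral Artin group do not commute (by a standard centraliser computation, using $\centr{\da{m'}}{a_{i+1}}=\Span{a_{i+1}}\oplus\Span{z_{i,i+1}}$ with $z_{i,i+1}$ generating the centre, and the trivial intersection $\Span{a_i}\cap\Span{a_{i+1},z_{i,i+1}}=\{1\}$), a contradiction. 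If $m=m'=2$, there is nothing to check. If $m,m'\ge3$, I apply Lemma~\ref{lem:dih_par_are_max} inside $A_\Gamma$ (which is two-dimensional by Proposition~\ref{prop:2d_iso_inv}) to the standard parabolic $\Span{u_i,u_{i+1}}$ and its overgroup $\phi^{-1}(\Span{a_i,a_{i+1}})\cong\da{m'}$: the lemma gives $\phi^{-1}(\Span{a_i,a_{i+1}})=\Span{u_i,u_{i+1}}$, so $\da m\cong\da{m'}$; passing to the central quotient, the Coxeter group $\da m/\operatorname{Z}(\da m)$ has order $2m$, so $m=m'$.

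Combining, the bijection $u_i\mapsto a_i$ is a labelled graph isomorphism $\Gamma\to\Lambda$. The most delicate point is the case $m,m'\ge3$: the containment $H_i\subseteq\Span{a_i,a_{i+1}}$ goes in the ``wrong'' direction to apply Lemma~\ref{lem:dih_par_are_max} directly, and the workaround is to pass to $\phi^{-1}$ and argue inside $A_\Gamma$.
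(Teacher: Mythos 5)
Your proposal is correct and follows the same overall architecture as the paper's proof: normalise so that the cycle of standard trees bounds $K_\Lambda$ itself, use the count of trees to identify each $\phi(u_i)$ with an element of the pointwise stabiliser of some $T_{a_i}$, and then match edge labels via dihedral subgroups. You are in fact more careful than the paper at one point: the paper asserts outright that $\phi$ sends standard generators to standard generators, whereas you only conclude $\phi(u_i)=a_i^{k_i}$ and then handle the resulting containment $\Span{a_i^{k_i},a_{i+1}^{k_{i+1}}}\le\Span{a_i,a_{i+1}}$ by a four-case analysis, applying Lemma~\ref{lem:dih_par_are_max} to $\phi^{-1}$ in the case $m,m'\ge3$; this is a legitimate and arguably cleaner way to close that step (and it is exactly the mechanism the paper itself uses in Lemma~\ref{lem:labels_subset}). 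One small inaccuracy: in that last case you justify $\da m\cong\da{m'}\Rightarrow m=m'$ by saying that ``the Coxeter group $\da m/\operatorname{Z}(\da m)$ has order $2m$''. The central quotient of $\da m$ is not the Coxeter group and is infinite (e.g.\ $\da 3/\operatorname{Z}(\da 3)\cong\Z_2*\Z_3$); the Coxeter quotient, which does have order $2m$, is not a priori an abstract invariant since it depends on a choice of generators. The implication itself is standard and true (for instance, the central quotient is $\Z_2*\Z_m$ for $m$ odd and $\Z*\Z_{m/2}$ for $m$ even, which together with the abelianisation recovers $m$), so this is only a misattributed justification, not a gap in the argument.
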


\begin{proof}
    Let $\L$ be the cycle of standard trees associated with $\phi$ (see Lemma~\ref{lem:mono->cycle}). Up to a translation in $D_\Lambda$ (equivalently, conjugation by a unique element in $A_\Lambda$), we can assume that $\L$ bounds the canonical fundamental domain $K_\Lambda$. By Lemma~\ref{lem:cycle of std trees}, $\L$ consists of at most $\girth\Gamma$ standard trees, while Lemma~\ref{lem:lenCycStdTrs} gives that $\L$ consists of at least $\girth\Lambda$ standard trees. Because $\girth\Gamma=\girth\Lambda=n$, $\L$ consists of precisely $n$ standard trees. In particular, $\phi$ maps standard generators of $A_\Gamma$ to standard generators of $A_\Lambda$. Therefore $\phi$ induces an injection $\phi\colon\ver\Gamma\to\ver\Lambda$ of finite sets with same cardinality, hence a bijection. Let $\{a,b\}\in\edge\Gamma$. By Lemma~\ref{lem:intersection fps}, the standard trees $T_{\phi(a)}$ and $T_{\phi(b)}$ intersect at the vertex $\Span{\phi(a),\phi(b)}$. Because $\phi(\Span{a,b})=\Span{\phi(a),\phi(b)}$, it follows that $\phi$ preserves the labels of edges. The data of vertices (standard generators), adjacency and labels (dihedral relations) completely determine the isomorphism class of the labelled graph.
\end{proof}

\begin{lem}
    \label{lem:rigid_girth>=5}
    Let $\Gamma$ and $\Lambda$ be labelled cycle graphs and let $\phi\colon A_\Gamma\to A_\Lambda$ be a group isomorphism; if $\girth\Lambda\ge5$, then $\Gamma\cong\Lambda$.
\end{lem}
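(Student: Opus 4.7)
The plan is to exploit the fact that when $\girth\Lambda\ge5$, the second assertion of Lemma~\ref{lem:lenCycStdTrs} forces the cycle of standard trees associated with $\phi$ to bound an $A_\Lambda$-translate of the canonical fundamental domain $K_\Lambda$, at which point Lemma~\ref{lem:bound_fund_dom} closes the argument.

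First I would apply Theorem~\ref{thm:girIsoInv} to obtain $\girth\Gamma=\girth\Lambda=n\ge5$. Since $n\ge5$, both $A_\Gamma$ and $A_\Lambda$ are two-dimensional of hyperbolic type (this is recorded in the discussion preceding Theorem~\ref{thm:girIsoInv} and follows from the characterisation of hyperbolic type in Section~\ref{subsec:hyp-type}, as a graph of girth at least $5$ contains no triangles and no $(2,2,2,2)$-square).

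Next, I would invoke Lemma~\ref{lem:mono->cycle} on the monomorphism $\phi\colon A_\Gamma\to A_\Lambda$, producing a cycle of standard trees $\L=\{T_i\}_{i\in\Z_k}$ in $D_\Lambda$ with $k\le n$ (by Lemma~\ref{lem:cycle of std trees}, since $C_n$ has exactly $n$ standard generators). On the other hand, Lemma~\ref{lem:lenCycStdTrs} gives $k\ge\girth\Lambda=n$, so $k=n$. Since $k=n=\girth\Lambda\ge5$, the ``moreover'' clause of Lemma~\ref{lem:lenCycStdTrs} applies: there exists a cycle subgraph $\gamma\subseteq\Lambda$ with $\girth\gamma=n$ such that the trees of $\L$ bound an $A_\Lambda$-translate of $K_\gamma$. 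But $\Lambda$ is itself a cycle of length $n$, so the only cycle subgraph of $\Lambda$ of girth $n$ is $\gamma=\Lambda$; hence $\L$ bounds a translate of $K_\Lambda$.

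Finally, Lemma~\ref{lem:bound_fund_dom} directly yields $\Gamma\cong\Lambda$. I do not anticipate a genuine obstacle here: the entire argument is a routine assembly of Theorem~\ref{thm:girIsoInv}, Lemma~\ref{lem:mono->cycle}, the refined Lemma~\ref{lem:lenCycStdTrs}, and Lemma~\ref{lem:bound_fund_dom}. The only point requiring a line of justification is the identification $\gamma=\Lambda$, which is immediate from $\Lambda$ being a cycle of length $n=\girth\gamma$.
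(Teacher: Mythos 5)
Your proposal is correct and follows essentially the same route as the paper: apply Theorem~\ref{thm:girIsoInv} to equate the girths, pass to the cycle of standard trees via Lemma~\ref{lem:mono->cycle}, use Lemma~\ref{lem:lenCycStdTrs} (including its ``moreover'' clause) to see that it bounds a translate of $K_\Lambda$, and conclude with Lemma~\ref{lem:bound_fund_dom}. The only difference is that you spell out the intermediate step $k=n$ and the identification $\gamma=\Lambda$, which the paper leaves implicit.
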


\begin{proof}
    By Theorem~\ref{thmintro:girIsoInv}, the length of $\Gamma$ and $\Lambda$ coincide. It follows that both $A_\Gamma$ and $A_\Lambda$ are two dimensional of hyperbolic type. Let $\{T_i\}_{i\in\Z_k}\subseteq D_\Lambda$ be the cycle of standard trees associated with $\phi$. Lemma~\ref{lem:lenCycStdTrs} gives that $\L$ bounds a single fundamental domain of $D_\Lambda$ and therefore $\Gamma\cong\Lambda$ by Lemma~\ref{lem:bound_fund_dom}.
\end{proof}

Let us move to the square case. We refer to Lemma~\ref{lem:cpt cube curvatures} for the estimate of curvatures in the cubical Deligne complex.

\begin{lem}[grid lemma]
    \label{lem:grid}
    Let $\Gamma$ be a labelled $4$-cycle graph. Let $\{T_i\}_{i\in\Z_4}$ be a cycle of four standard trees in $C_{\Gamma}$ and let $F\colon D\to C_\Gamma$ be a reduced disc diagram associated with it. Then each intersection vertex has curvature precisely $\frac\pi2$. \par 
    Moreover, if $D$ contains a vertex $v$ of type~$2$ that is not an intersection vertex, then $\lab v=2$. If $v$ lies in the interior of $D$, then it is adjacent to four type-$1$ vertices; if $v$ lies in the boundary of $D$, then it is adjacent to two type-$1$ vertices. 
\end{lem}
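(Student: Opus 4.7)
The plan is to apply the combinatorial Gauss--Bonnet theorem to $F\colon D\to C_\Gamma$ with the cubical metric. Because $\Gamma$ is a $4$-cycle, every cycle subgraph of $\Gamma$ has girth exactly $4$, so Lemma~\ref{lem:cpt cube curvatures} guarantees that every $2$-cell of $D$ has zero curvature and every non-intersection vertex contributes non-positively.

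At each intersection vertex $v_i=T_i\cap T_{i+1}$, the two boundary edges of $D$ at $v_i$ lie in the distinct standard trees $T_i$ and $T_{i+1}$, which meet only at $v_i$; they therefore sit in a common $2$-cube of $C_\Gamma$, making them adjacent in $\link{C_\Gamma}{v_i}$. In particular $v_i$ carries at least one corner, so $\kappa(v_i)\le\pi/2$. Combinatorial Gauss--Bonnet then reads
\[
2\pi=\sum_{v\in\ver D}\kappa(v)+\sum_{P\in D^{(2)}}\kappa(P)\le\sum_{i=1}^{4}\kappa(v_i)\le 4\cdot\tfrac{\pi}{2}=2\pi,
\]
forcing $\kappa(v_i)=\pi/2$ for each $i$ and $\kappa(v)=0$ for every other vertex. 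This establishes the first assertion and shows that at each $v_i$ there is precisely one corner.

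Next, let $v$ be a non-intersection vertex of $D$ of type~$2$ and label~$m$. From $\kappa(v)=0$ one reads that $v$ has exactly four corners if it is interior (so $\link D v$ is a combinatorial $4$-cycle) and exactly two corners if it lies on the boundary (so $\link D v$ is a path of length~$2$ whose endpoints are the two boundary edges of $D$ at $v$, which both lie in the single standard tree $T_i$ containing~$v$ since $v$ is non-intersection). By Lemma~\ref{lem:reduced dgr -> immersion}, both configurations immerse into $\link{C_\Gamma}{F(v)}$, a bipartite graph of combinatorial girth~$2m$.

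The main obstacle is ruling out $m\ge 3$. In the interior case this is quick: a non-backtracking closed walk of length $4$ in a simple bipartite graph bounds a $4$-cycle, so $2m\le 4$, forcing $m=2$; the four neighbours of $v$ in $D$ are then type-$1$ since in $C_\Gamma$ all neighbours of type-$2$ vertices are type-$1$. For the boundary case I would show that when $m\ge 3$ no non-backtracking path of length~$2$ in $\link{C_\Gamma}{F(v)}$ can connect two distinct edges of the same standard tree at $v$: writing $v=g\Span{c,d}$ with $T_i=g\cdot T_c$, the edges of $T_c$ incident to $v$ correspond to cosets $g\Delta^{2k}\Span{c}$ with $\Delta$ the Garside element of $A_{\{c,d\}}$, and an alternating-syllable-length argument, combined with the image in the associated Coxeter group, rules out any three-syllable expression $c^\alpha d^\beta c^\gamma$ of a nontrivial power of $\Delta^2$. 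This forces $m=2$ also in the boundary case, and the two boundary edges of $v$ then give the claimed type-$1$ adjacencies.
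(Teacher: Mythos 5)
Your proof follows essentially the same route as the paper: combinatorial Gauss--Bonnet on the cubical diagram, with zero-curvature squares and non-positive contributions from all non-intersection vertices (the type-$0$ bound $(4-\girth\gamma)\frac\pi2$ vanishes because the only cycle subgraph of a $4$-cycle has girth $4$), forcing $\kappa(v_i)=\frac\pi2$ at each intersection vertex; then the label restriction comes from the combinatorial girth $2m$ of the link of a type-$2$ vertex (Appel--Schupp) in the interior case and from the distance between edges of a single standard tree in the boundary case. Your interior argument is complete and, if anything, slightly cleaner than the paper's: you extract the exact link structure from $\kappa(v)=0$ rather than re-running Gauss--Bonnet with the bound $\kappa(v)\le(2-m)\pi$.

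Two small points. First, the justification that each $v_i$ carries a corner is off: adjacency of the two boundary edges in $\link{C_\Gamma}{v_i}$ is neither automatic nor what you need; the corner exists simply because $D$ is homeomorphic to a disc, so the link of a boundary vertex is an arc through at least one $2$-cell. Second, for the boundary case the paper does not reprove anything: it invokes \cite{martin2022acylindrical}*{Lemma 4.3} (also used in Lemmas~\ref{lem:cpt curvatures} and~\ref{lem:cpt cube curvatures}), which states that two distinct edges of the same standard tree incident to a type-$2$ vertex of label $m$ are at combinatorial distance at least $m$ in the link. Your Garside/syllable sketch is a plausible way to re-derive this (modulo the detail that for even $m$ the relevant cosets are $g\Delta^{k}\Span{c}$, not only $g\Delta^{2k}\Span{c}$), but as written it is only a plan; citing the existing lemma closes the argument immediately.
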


\begin{proof}
    We denote by $v_i$ the intersection vertex $T_i\cap T_{i+1}$. The first claim is a direct application of the combinatorial Gauss--Bonnet theorem~\ref{thm:comb gbt}. Indeed, each of the four $v_i$'s has curvature at most $\frac\pi2$ and it cannot be strictly smaller, since every other vertex contributes with non-positive curvature. \par
    
    For the ``moreover'' claim, let us assume by contradiction that $D$ contains a type-$2$ vertex~$v$ of label $m\ge3$ and let us estimate its curvature. We shall distinguish two cases, depending on whether $v$ belongs to the interior or the boundary of $D$. \par
    
    Let us first assume that $v$ belongs to the interior of $D$. We have that $\kappa(v)\le\kappa(f(v))=2\pi-\len{ }{\gamma}$, for some cycle subgraph $\gamma\subseteq\link{C_\Gamma}{f(v)}$. Because every embedded loop of $\link{C_\Gamma}{f(v)}$ has length at least~$2m\cdot\frac\pi2=m\pi$~\cite{appel1983artin}*{Lemma~6}, we have that $\kappa(v)\le(2-m)\pi$. By the combinatorial Gauss--Bonnet theorem~\ref{thm:comb gbt} relative to the disc diagram $D$ we obtain
    \[
    2\pi=\sum_{u\in D^{(0)}}\kappa(u)+\sum_{P\in D^{(2)}}\kappa(P)\le\sum_{i=1}^4\kappa(v_i)+\kappa(v),
    \]
    where the estimate holds since the intersection vertices $\{v_i\}_{i=1}^4$ are the only vertices that can possibly contribute with positive curvature and all $2$-cells have non-positive curvature. Moreover, every vertex $v_i$ has curvature bounded above by~$\frac\pi2$. Therefore we obtain
    \[
    2\pi\le4\cdot\frac\pi2+(2-m)\pi\le\pi,
    \]
    a contradiction. It follows that $\lab v=2$. The vertex $v$ is adjacent to at least $2\cdot2=4$ vertices of type~$1$~\cite{appel1983artin}*{Lemma 6}, while a higher degree (hence a higher girth of $\link D v$) would bring an excess of negative curvature, against the combinatorial Gauss--Bonnet formula. It follows that $v$ is adjacent to precisely $4$ vertices of type~$1$. \par

    The proof in the case $v$ is a boundary vertex is analogous, the only caveat being that $\kappa(f(v))=\pi-\len{ }{\gamma}$, where $\gamma$ is a path in $\link{C_\Gamma}{f(v)}$ connecting the two vertices of $\link{C_\Gamma}{f(v)}$ corresponding to the standard tree $f(v)$ belongs to. Again, any two such distinct vertices are at distance at least $m\cdot\frac\pi2$~\cite{martin2022acylindrical}*{Lemma 4.3}.
\end{proof}

\begin{lem}
    \label{lem:cyc_std_tre_4cyc}
    Let $\Gamma$ be a labelled $4$-cycle graph. Let $\L\subseteq C_\Gamma$ be a cycle of four standard trees. The following classification holds:
    \begin{enumerate}
        \item if there are $p,q,r,s\in\N_{\ge3}$ such that $\Gamma$ is either a $(p,q,r,s)$-cycle or a $(p,q,r,2)$-cycle or a $(p,2,q,2)$-cycle, then $\L$ bounds a fundamental domain;
        \item if there are $p,q\in\N_{\ge3}$ such that $\Gamma$ is a $(p,q,2,2)$-cycle, then  $\L$ bounds either a fundamental domain or the union of two adjacent fundamental domains;
        \item if there is $p\in\N_{\ge3}$ such that $\Gamma$ is a $(p,2,2,2)$-cycle, then $\L$ bounds either one fundamental domain or the union of two adjacent fundamental domains or the square given by the union of four fundamental domains that meet at a common vertex.
    \end{enumerate}
    We refer to Figure~\ref{fig:4-cyc_std_tre_config}.
\end{lem}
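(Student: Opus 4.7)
The plan is to analyse a minimal (hence reduced) filling disc diagram $F\colon D\to C_\Gamma$ for the cycle $\mathcal{L}$ of four standard trees, combining the Grid Lemma (Lemma~\ref{lem:grid}) with the combinatorial Gauss--Bonnet theorem (Theorem~\ref{thm:comb gbt}) to force a rigid tiled structure on $D$. By the Grid Lemma, each intersection vertex has curvature exactly $\pi/2$, so the four of them already account for the total curvature $2\pi$; since every other vertex and every $2$-cell of $D$ has non-positive curvature (Lemma~\ref{lem:cpt cube curvatures}), Gauss--Bonnet forces each of these remaining contributions to vanish, and every interior type-$2$ vertex has label $2$ and is surrounded by exactly four $2$-cells.

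Next, I would deduce that $D$ decomposes as a union of translates of $K_\Gamma$. For any interior type-$0$ vertex $u$ (one exists by Remark~\ref{rem:verTyp0}), the zero-curvature condition forces $\link{D}{u}$ to be an embedded loop of length exactly $2\pi$ inside $\link{C_\Gamma}{F(u)}\cong\Gamma$, which therefore must coincide with all of $\Gamma$. Hence $\operatorname{Star}_D(u)$ maps isomorphically under $F$ onto a translate of $K_\Gamma$. Two such translates that are adjacent in $D$ meet along an edge whose type-$2$ endpoint lies in the interior of $D$; by the Grid Lemma, that endpoint has label $2$, so the gluing takes place only across squares of $K_\Gamma$ corresponding to label-$2$ edges of $\Gamma$.

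The proof then concludes by a case analysis on the label pattern of $\Gamma$. Edges of label at least $3$ act as barriers: gluing a fundamental domain across such an edge would create an interior type-$2$ vertex of label $\geq 3$, which the Grid Lemma forbids. Hence $D$ can only grow along paths of label-$2$ edges, subject to the further requirement that its boundary keep tracing exactly four standard trees. When the label-$2$ edges of $\Gamma$ are either absent, form a single edge, or form two non-adjacent edges, i.e., in the cases $(p,q,r,s)$, $(p,q,r,2)$ and $(p,2,q,2)$, any extension would force a fifth standard tree into $\partial D$, so $D$ is necessarily a single translate of $K_\Gamma$. When two label-$2$ edges are consecutive, as in the $(p,q,2,2)$ case, precisely one further extension is allowed, producing a $1\times2$ strip of two adjacent fundamental domains. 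When three label-$2$ edges form a path, as in $(p,2,2,2)$, the admissible configurations are the single $K_\Gamma$, the $1\times 2$ strip, and the $2\times 2$ block of four fundamental domains meeting at a common interior type-$2$ vertex of label $2$.

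The main obstacle will be the last case: showing that in $(p,2,2,2)$ the only admissible configurations are the three listed, while ruling out $L$-shapes, $1\times 3$ strips, and longer extensions that are a priori compatible with the Grid Lemma but fail to close up into a disc bounded by only four standard trees. The key point will be to track, as fundamental domains are glued, which boundary segments get identified with portions of the original four trees and which would introduce a fifth tree; the rigidity of the label-$2$ path inside a $(p,2,2,2)$-cycle, together with the single label-$\geq 3$ edge forcing a corner in $\partial D$, should pin down exactly the configurations listed in the statement.
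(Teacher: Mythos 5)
Your proposal is correct and takes essentially the same approach as the paper, whose entire proof of this lemma is a one-line appeal to Lemma~\ref{lem:grid} plus the fact that $\Gamma$ is not the $(2,2,2,2)$-square; you supply considerably more detail than the paper does. The ``main obstacle'' you flag at the end does close up with the tools you have already assembled: since only the four intersection vertices may carry positive curvature and each carries exactly $\pi/2$, Gauss--Bonnet forces every other vertex and $2$-cell of $D$ to be flat, so $D$ is combinatorially a rectangle tiled by translates of $K_\Gamma$ (this is what rules out $L$-shapes, which would need a fifth convex corner, i.e.\ a fifth tree); the remaining constraint is that every non-corner type-$2$ vertex of the rectangle has label~$2$, and since the edge-types of the type-$2$ vertices in such a grid repeat with period two in each direction, this immediately pins the rectangle down to $1\times1$, $1\times2$ or $2\times2$ fundamental domains exactly as the label pattern of $\Gamma$ permits.
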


\begin{figure}
    \centering
    \includegraphics[width=0.8\linewidth]{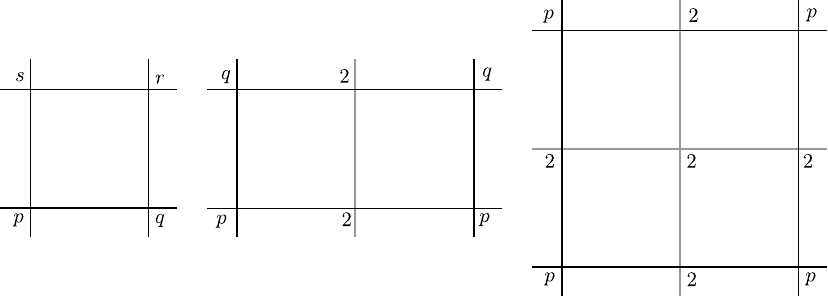}
    \caption{The possible configurations of four standard trees (in black) in the case $\girth\Gamma=4$. Each square corresponds to a fundamental domain for the action $A_\Gamma\actson C_\Gamma$.}
    \label{fig:4-cyc_std_tre_config}
\end{figure}

\begin{proof}
    The proof follows as a consequence of Lemma~\ref{lem:grid} together with the fact that $\Gamma$ is of hyperbolic type, hence it is not the $(2,2,2,2)$-square.
\end{proof}

\begin{lem}[$4$-cycles]
    \label{lem:4-cyc_art_grp}
    Let $\Gamma$ and $\Lambda$ be labelled $4$-cycles. If there is a group isomorphism $\phi\colon A_\Gamma\to A_\Lambda$, then $\Gamma\cong\Lambda$.
\end{lem}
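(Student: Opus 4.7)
The plan is to split into the right-angled case and the hyperbolic-type case; in the latter, we apply the cycle-of-standard-trees correspondence of Section~\ref{sec:embeddCycArtGrp} to both $\phi$ and $\phi^{-1}$ simultaneously, invoking Lemma~\ref{lem:bound_fund_dom} whenever possible and, for the residual configurations, the weighted-girth invariance of Theorem~\ref{thm:wgIsoInv}.

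First, if $\Gamma$ is the $(2,2,2,2)$-square then $A_\Gamma$ is a right-angled Artin group, so by Baudisch's theorem $A_\Lambda$ is too; being a $4$-cycle, $\Lambda$ must itself be the $(2,2,2,2)$-square, and $\Gamma\cong\Lambda$. Otherwise $\Gamma$ has at least one label $\ge 3$ and, by Lemma~\ref{lem:labels_subset} applied to both $\phi$ and $\phi^{-1}$, so does $\Lambda$, with the two sets of labels $\ge 3$ coinciding; in particular $A_\Gamma$ and $A_\Lambda$ are both two-dimensional of hyperbolic type (the only hyperbolic-type condition that is not vacuous for a $4$-cycle is being different from the $(2,2,2,2)$-square). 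Applying Lemma~\ref{lem:mono->cycle} to $\phi$ and $\phi^{-1}$ yields cycles of standard trees $\mathscr L_\Lambda\subseteq C_\Lambda$ and $\mathscr L_\Gamma\subseteq C_\Gamma$, and the combination of Lemma~\ref{lem:cycle of std trees} with Lemma~\ref{lem:lenCycStdTrs} forces each of them to consist of exactly $4$ standard trees. We then invoke the classification of Lemma~\ref{lem:cyc_std_tre_4cyc}: if the label pattern of $\Lambda$ belongs to its case~(1) family, namely $(p,q,r,s)$, $(p,q,r,2)$, or $(p,2,q,2)$ with $p,q,r,s\ge 3$, then $\mathscr L_\Lambda$ bounds a single fundamental domain of $C_\Lambda$, so Lemma~\ref{lem:bound_fund_dom} applied to $\phi$ gives $\Gamma\cong\Lambda$; symmetrically, if the label pattern of $\Gamma$ falls in case~(1), we apply Lemma~\ref{lem:bound_fund_dom} to $\phi^{-1}$.

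The remaining situation is that both $\Gamma$ and $\Lambda$ have label patterns in the ``adjacent-$2$'s'' family $\{(p,q,2,2), (p,2,2,2)\}$ (up to dihedral symmetry of the cycle). Theorem~\ref{thm:wgIsoInv} then forces $\wg\Gamma=\wg\Lambda$, and since $\wg(p,q,2,2)=6\ne 5=\wg(p,2,2,2)$, the patterns of $\Gamma$ and $\Lambda$ are of the same type. In each of the two types, Lemma~\ref{lem:labels_subset} pins down the multiset of labels $\ge 3$ (namely $\{p\}$ in the $(p,2,2,2)$ case, and $\{p,q\}$ in the $(p,q,2,2)$ case, possibly with $p=q$), and the corresponding labelled $4$-cycle is unique up to isomorphism. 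The main obstacle lies precisely in this last reduction: ruling out the hypothetical mismatch $\Gamma=(p,2,2,2)$ versus $\Lambda=(p,p,2,2)$, where the \emph{sets} of labels $\ge 3$ coincide but the multiplicities differ; the abelianisations already agree when $p$ is even, and Lemma~\ref{lem:labels_subset} alone is blind to multiplicities, so the weighted-girth invariance of Theorem~\ref{thm:wgIsoInv} is indispensable, detecting the additional label-$\ge 3$ edge through the weighted girth.
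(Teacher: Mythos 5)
Your proof is correct, and its first half coincides with the paper's: the $(2,2,2,2)$ case via RAAG rigidity, the reduction to hyperbolic type, and then Lemma~\ref{lem:cyc_std_tre_4cyc}(1) combined with Lemma~\ref{lem:bound_fund_dom} for the patterns $(p,q,r,s)$, $(p,q,r,2)$ and $(p,2,q,2)$. Where you genuinely diverge is the residual case in which both graphs lie in $\{(p,q,2,2),(p,2,2,2)\}$. The paper stays inside the configuration analysis of Lemma~\ref{lem:cyc_std_tre_4cyc}: when the cycle of standard trees bounds two adjacent fundamental domains (resp.\ four), it deduces $A_\Gamma\cong A_{(p,p,q,q)}$ (resp.\ $A_{(p,p,2,2)}$ or $A_{(p,p,p,p)}$) and excludes these configurations using the isomorphism invariance of being of large type (Martin--Vaskou) and Godelle's count of conjugacy classes of $\da p$-subgroups. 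You instead bypass the two- and four-domain configurations entirely by invoking Theorem~\ref{thm:wgIsoInv}: the weighted girths $6$ and $5$ separate the two residual patterns, and a two-way application of Lemma~\ref{lem:labels_subset} then pins down the labels; your set-to-multiset step is fine, since for the adjacent pattern $(p,q,2,2)$ equality of label \emph{sets} in both directions does force equality of multisets. Your route trades the external inputs (large-type invariance, Godelle) for the paper's own Theorem~\ref{thm:wgIsoInv}, which is heavier machinery but is proved in Section~\ref{sec:minCycEmbedd}, before and independently of Section~\ref{sec:rigiCycArtGrp}, so there is no circularity. One presentational slip to fix: you cannot use Lemma~\ref{lem:labels_subset} to show that $\Lambda$ has a label $\ge3$, because its hypothesis (that $A_\Lambda$ be of hyperbolic type) is, for a $4$-cycle, precisely the conclusion you want; the correct justification is your own first step run in the reverse direction (if $\Lambda$ were the $(2,2,2,2)$-square, $A_\Gamma$ would be a RAAG and hence $\Gamma$ would be the $(2,2,2,2)$-square as well).
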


\begin{proof}
    The solution of the isomorphism problem for RAAGs within the class of Artin groups gives that, if $\Gamma$ is the $(2,2,2,2)$-cycle, then $\Lambda$ also is~\cites{baudisch1981subgroups,droms1987isomorphisms}. Therefore, we can assume that both $\Gamma$ and $\Lambda$ have at least one label different from~$2$ and thus are of hyperbolic type. Let $\L\subseteq C_\Lambda$ be the cycle of standard trees associated with~$\phi$. \par

    If there are $p,q,r,s\in\N_{\ge3}$ such that $\Lambda$ is a $(p,q,r,s)$-cycle or a $(p,q,r,2)$-cycle or a $(p,2,q,2)$-cycle, then $\L$ bounds a translate of $K_\Lambda$, by Lemma~\ref{lem:cyc_std_tre_4cyc}. Then Lemma~\ref{lem:bound_fund_dom} implies that $\Gamma\cong\Lambda$.\par
    
    Let us now assume that there are $p,q\in\N_{\ge3}$ such that $\Lambda$ is a $(p,q,2,2)$-cycle. By what we have just proved, $\Gamma$ can only be a $(p',q',2,2)$-cycle or a $(p',2,2,2)$-cycle, for some $p',q'\in\N_{\ge3}$. By the classification of Lemma~\ref{lem:cyc_std_tre_4cyc}, the cycle $\L$ either bounds a single fundamental domain or the union of two adjacent fundamental domains. In the first case, Lemma~\ref{lem:bound_fund_dom} provides that $\Gamma\cong\Lambda$. In the second case, we would have that $A_\Gamma$ is isomorphic to the Artin group on the $4$-cycle $(p,p,q,q)$. However, being of large type is an isomorphism invariant~\cite{martin2024characterising}*{Theorem A}, a yielding contradiction. \par 

    Finally, let us assume that there is $p\in\N_{\ge3}$ such that $\Lambda$ is a $(p,2,2,2)$-cycle. Again, by what we have proved, $\Gamma$ must be a $(p',2,2,2)$-cycle, for some $p'\in\N_{\ge3}$. By the classification of Lemma~\ref{lem:cyc_std_tre_4cyc}, the cycle $\L$ either bounds a single fundamental domain, the union of two adjacent fundamental domains or the square given by the union of four fundamental domains that meet at a common vertex. In the first case, Lemma~\ref{lem:bound_fund_dom} provides that $\Gamma\cong\Lambda$. In the second and third cases we would have that $A_\Gamma$ is isomorphic to the Artin group on the $4$-cycle $(p,p,2,2)$ or $(p,p,p,p)$. Let us see that the two cases are not admissible. The groups $A_{(p,p,2,2)}$ and $A_{(p,2,2,2)}$ cannot be isomorphic, for $A_{(p,p,2,2)}$ has two distinct conjugacy classes of a $\da p$-subgroup, while $A_{(p,2,2,2)}$ has only one~\cite{godelle2007artin}*{Corollary 4.12}; the groups $A_{(p,p,p,p)}$ and $A_{(p,2,2,2)}$ cannot be isomorphic, for only one of them is of large type and being of large type is an isomorphism invariant, as argued in the previous paragraph.
\end{proof}

Finally, we deal with the triangle case.

\begin{lem}[$3$-cycles]
    \label{lem:rigid3cyc}
    Let $\Gamma$ and $\Lambda$ be labelled triangle graphs; if $A_\Gamma\cong A_\Lambda$, then $\Gamma\cong\Lambda$.
\end{lem}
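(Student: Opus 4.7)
The plan is to split the proof of Lemma~\ref{lem:rigid3cyc} into three cases according to the algebraic type of $A_\Gamma$.

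\emph{Case 1 (non two-dimensional).} If $A_\Gamma$ is not two-dimensional, then by Proposition~\ref{prop:2d_iso_inv} neither is $A_\Lambda$, so both are spherical triangle Artin groups, i.e.\ one of $A_{(2,2,n)}$ (which is $\Z^3$ when $n=2$ and $\Z\times\da{n}$ when $n\ge3$), $A_{(2,3,3)}$, $A_{(2,3,4)}$ or $A_{(2,3,5)}$. These groups are pairwise non-isomorphic: parities distinguish the $(2,2,n)$ family among themselves via abelianisation, the order of the associated Coxeter group distinguishes $A_{(2,3,3)},A_{(2,3,4)},A_{(2,3,5)}$ from one another, and the remaining comparisons are immediate from centre/abelianisation considerations. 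Hence $\Gamma\cong\Lambda$.

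\emph{Case 2 (two-dimensional of hyperbolic type).} By Proposition~\ref{prop:2d_iso_inv}, $A_\Lambda$ is also two-dimensional. The key idea is to count conjugacy classes of maximal $\da m$-subgroups for each $m\ge3$. By Lemma~\ref{lem:dihedral of hyp-type are parab}, every such subgroup of $A_\Gamma$ is conjugated into a dihedral parabolic; by Lemma~\ref{lem:dih_par_are_max}, these parabolics are inclusion-wise maximal among $\da m$-subgroups; and by Godelle's theorem (already used in the proof of Lemma~\ref{lem:4-cyc_art_grp}), distinct standard dihedral parabolics lie in distinct conjugacy classes. Consequently, this count equals the number of edges of $\Gamma$ with label $m$. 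Applying the same analysis to $A_\Lambda$ and using that this count is preserved by isomorphisms, the multisets of labels $\ge3$ of $\Gamma$ and $\Lambda$ coincide; since both graphs are triangles with three edges in total, the number of label-$2$ edges also agrees. A labelled triangle is determined by its multiset of labels, giving $\Gamma\cong\Lambda$.

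\emph{Case 3 (two-dimensional of Euclidean type).} Here $\Gamma\in\{(2,3,6),(2,4,4),(3,3,3)\}$. The three Euclidean triangle Artin groups have pairwise distinct abelianisations ($\Z^2$, $\Z^3$ and $\Z$ respectively), so they are pairwise non-isomorphic. What remains is to rule out that a Euclidean $A_\Gamma$ is isomorphic to a hyperbolic-type $A_\Lambda$. For this, the plan is to invoke Theorem~\ref{thm:classify Z^2 subgrp}: in a hyperbolic-type two-dimensional Artin group, every virtually $\Z^2$ subgroup lies in a dihedral parabolic or in a centraliser $\Span{a}\times F$ of a conjugate of a standard generator. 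In each Euclidean triangle Artin group, however, one can exhibit a $\Z^2$-subgroup arising from the affine/translation structure (generated, for instance, by central elements of two distinct dihedral parabolics) that fails both alternatives, yielding the required contradiction.

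The main obstacle lies in the last step of Case~3: explicitly producing the appropriate $\Z^2$-subgroup in each of the three affine-type triangle Artin groups and certifying that it is contained neither in any dihedral parabolic nor in any centraliser of the form $\Span{a}\times F$ requires a dedicated computation exploiting the specific affine structure of each Artin group.
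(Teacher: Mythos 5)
Your three-way case division is reasonable, and your Case~2 is a legitimate alternative to the paper's Claim~\ref{claim:hyp_triangle} \emph{when both groups are of hyperbolic type}: Lemma~\ref{lem:dihedral of hyp-type are parab}, Lemma~\ref{lem:dih_par_are_max} and Godelle's conjugacy result do identify the conjugacy classes of maximal non-abelian dihedral Artin subgroups with the edges of label at least~$3$, and the isomorphism type of such a subgroup recovers the label. One caveat: maximality should be taken among \emph{all} subgroups isomorphic to some $\da{m'}$ with $m'\ge3$, not among $\da{m}$-subgroups for a fixed $m$; otherwise you would have to rule out embeddings $\da{m}\hookrightarrow\da{m'}$ with $m\ne m'$, which your lemmas do not address.

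The genuine gap is Case~3, and it is not merely the missing computation you flag: the strategy itself does not close. If $H\cong\Z^2$ sits in a Euclidean $A_\Gamma$ and is contained in no dihedral parabolic and in no centraliser of a conjugate of a standard generator, its image under an abstract isomorphism $\phi\colon A_\Gamma\to A_\Lambda$ is constrained by Theorem~\ref{thm:classify Z^2 subgrp} relative to the parabolics and standard generators \emph{of $A_\Lambda$}; since ``parabolic'' and ``standard generator'' are presentation data, no contradiction results unless you first prove that $\phi$ transports these notions --- which is essentially the rigidity you are trying to establish. The paper instead excludes the Euclidean-versus-hyperbolic comparisons by isomorphism-invariant means: $(3,3,3)$ via the invariance of being of large type (Claim~\ref{claim:large_triang}); $(2,3,6)$ via the one-sided label inclusion of Lemma~\ref{lem:labels_subset}, which only requires the \emph{target} to be of hyperbolic type; and $(2,4,4)$ versus $(2,4,v)$ with $v\ge5$ by observing that the two non-conjugate $\da{4}$-parabolics of $A_{(2,4,4)}$ would both land in conjugates of the unique $\da{4}$-parabolic of $A_{(2,4,v)}$, violating the maximality of Lemma~\ref{lem:dih_par_are_max}. (Your own counting argument, run one-sidedly, would also do this last step.) Finally, Case~1 is incomplete as written: the abelianisation of $A_{(2,2,n)}\cong\Z\times\da{n}$ only detects the parity of $n$ (so it does not separate $n=4$ from $n=6$), and $A_{(2,3,3)}$ and $A_{(2,3,5)}$ have the same abelianisation $\Z$, while the order of the associated Coxeter group is not known to be an invariant of the Artin group; the paper simply invokes Paris's solution of the isomorphism problem for spherical-type Artin groups, and you should too.
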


\begin{proof}
    Depending on the labels of $\Lambda$, $A_\Lambda$ is either of spherical type or two-dimension\-al. If $A_\Lambda$ is of spherical type, then so is $A_\Gamma$, since being two-dimensional is an isomorphism invariant. In this case, the solution of the isomorphism problem within the class of spherical-type Artin groups implies that $\Gamma\cong \Lambda$~\cite{paris2004artin}. Let us then assume that $A_\Lambda$ (thus $A_\Gamma$) is two-dimensional.

    \begin{claim}\label{claim:large_triang}
        If $\Lambda$ is of large type (i.e. no label of $\Lambda$ is a $2$), then $\Gamma\cong\Lambda$.
    \end{claim}

    \begin{proof}[Proof of Claim~\ref{claim:large_triang}]
        If $\Lambda$ is of large type, then $\Gamma$ is also of large type~\cite{martin2024characterising}*{Theorem A}. Therefore, $\Gamma$ and $\Lambda$ are either the $(3,3,3)$-triangle or they are of hyperbolic type. If $\Lambda$ is of hyperbolic type, then any embedding $A_\Gamma\to A_\Lambda$ induces a triangle of standard trees in $D_\Lambda$. Such triangle of standard trees bounds a single translate of $K_\Lambda$~\cite{vaskou2025automorphisms}*{Lemma 3.10} and hence $\Gamma\cong\Lambda$ by Lemma~\ref{lem:bound_fund_dom}. If $\Lambda$ is the $(3,3,3)$-triangle, then the same argument applied to any embedding $A_{(3,3,3)}\to A_\Gamma$ shows that $\Gamma=(3,3,3)$.
    \end{proof}

    Henceforth we can assume that there are $p,q,u,v\in\N_{\ge3}$ such that $\Gamma=(2,p,q)$ and $\Lambda=(2,u,v)$.

    \begin{claim}
        \label{claim:hyp_triangle}
        If $\Lambda$ is of hyperbolic type, then $\Gamma\cong\Lambda$.
    \end{claim}

    \begin{proof}[Proof of Claim~\ref{claim:hyp_triangle}]
        Since $\Lambda$ is of hyperbolic type, it follows from Lemma~\ref{lem:labels_subset} that $\{p,q\}\subseteq\{u,v\}$. If $p$ and $q$ are distinct, then $\{p,q\}=\{u,v\}$ and therefore $\Gamma\cong\Lambda$. If $p=q$, then $p\ge4$, for the Artin group $A_{(2,3,3)}$ would be of spherical type. If $p\ge5$, then $\Gamma$ is of hyperbolic type and Lemma~\ref{lem:labels_subset} applied to any embedding $A_\Lambda\to A_\Gamma$ gives $\{m,n\}\subseteq\{p\}$, hence $\Gamma\cong\Lambda$. \par
        We are left to prove that, if $A_{(2,4,4)}\cong A_{(2,4,v)}$, then $v=4$. To this end, let us assume by contradiction that $v\ge 5$ (note that $A_{(2,3,4)}$ is of spherical type) and let us fix an isomorphism $\phi\colon A_{(2,4,4)}\to A_{(2,4,v)}$. Let $a,b,c$ be the standard generators of $A_{(2,4,4)}$ with $m_{bc},m_{ac}= 4$, and let $y,z$ be standard generators of $A_{(2,4,v)}$ with $m_{yz}=4$. By Lemma~\ref{lem:dihedral of hyp-type are parab}, there are $g,h\in A_\Gamma$ such that
        \[
        \text{$\phi(\Span{b,c})\le\conj{\Span{y,z}}{\phi(g)}$ and $\phi(\Span{a,c})\le\conj{\Span{y,x}}{\phi(h)}$.}
        \]
        Because $\Span{b,c}$ and $\Span{a,c}$ are not conjugate in $A_{(2,4,4)}$, it follows that $\phi^{-1}(\Span{y,z})$ is a dihedral Artin subgroup that properly contains the parabolic subgroup $\phi(g)^{-1}\Span{b,c}\phi(g)$, against Lemma~\ref{lem:dih_par_are_max}.
    \end{proof}

    We are left with the case in which both $A_\Gamma$ and $A_\Lambda$ are two-dimensional, not large and not of hyperbolic type. The only defining graphs that allow such conditions are $(2,4,4)$ and $(2,3,6)$. The groups $A_{(2,4,4)}$ and $A_{(2,3,6)}$ cannot be isomorphic, as their abelianisations do not agree.
\end{proof}

We are ready to prove Theorem~\ref{thm:rigidCycArtGrp}.

\begin{proof}[Proof of Theorem~\ref{thm:rigidCycArtGrp}]
    Let $\Gamma$ and $\Lambda$ be two labelled cycles such that $A_\Gamma\cong A_\Lambda$. By Theorem~\ref{thmintro:girIsoInv}, $\Gamma$ and $\Lambda$ are cycles of the same length~$n\in\N_{\ge3}$. Lemma~\ref{lem:rigid_girth>=5}, Lemma~\ref{lem:4-cyc_art_grp} or Lemma~\ref{lem:rigid3cyc} provide the claim, depending on whether $n\ge5$, $n=4$ or $n=3$, respectively.
\end{proof}

\bibliographystyle{abbrv}
\bibliography{bibliography}
\end{document}